\begin{document}

\newcommand{\e}{\text{e}}
\newcommand{\Z}{\mathbb{Z}}
\newtheorem{theorem}{Theorem}[section]
\newtheorem{lemma}[theorem]{Lemma}
\newtheorem{corollary}[theorem]{Corollary}
\newtheorem{question}[theorem]{Question}
\newtheorem{conjecture}[theorem]{Conjecture}
\newtheorem{claim}{}[theorem]
\theoremstyle{remark}
\newtheorem*{remark*}{Remark}
\newcommand{\ignore}[1]{}
\newcommand{\todo}[1]{TODO: {\em #1}}

\newcommand{\BB}{\mathcal{B}}
\newcommand{\ZZ}{\mathcal{Z}}
\newcommand{\NS}{\mathcal{N}}
\newcommand{\N}{\mathbb{N}}
\newcommand{\cl}{\text{cl}}

\newcommand{\MM}{\mathbb{M}}
\newcommand{\SP}{\mathbb{S}}
\newcommand{\HH}[1]{\mathcal{H}\!\left(#1\right)}

\newcommand{\statetheorem}[2]{\expandafter\def\csname thmcontent@#1\endcsname {#2}\begin{theorem}\label{#1}#2\end{theorem}\newtheorem*{thm:#1}{Theorem \ref{#1}}}
\newcommand{\repeattheorem}[1]{\begin{thm:#1}\csname thmcontent@#1\endcsname \end{thm:#1}}

\newcommand{\noproof}{\mbox{ }\hfill$\square$}

\mathtoolsset{centercolon}

\title{On the number of bases of almost all matroids}

\begin{abstract} For a matroid $M$ of rank $r$ on $n$ elements, let  $b(M)$ denote the fraction of bases of $M$ among the subsets of the ground set with cardinality $r$.
We show that  $$\Omega(1/n)\leq 1-b(M)\leq O(\log(n)^3/n)\text{ as }n\rightarrow \infty$$ for asymptotically almost all matroids $M$ on $n$ elements. We derive that asymptotically almost all matroids on $n$ elements  
(1) have a $U_{k,2k}$-minor, whenever $k\leq O(\log(n))$,
(2) have girth $\geq \Omega(\log(n))$, 
(3) have Tutte connectivity $\geq \Omega(\sqrt{\log(n)})$, and 
(4) do not arise as the truncation of another matroid.

Our argument is based on a refined method for writing compressed descriptions of any given matroid, which allows bounding the number of matroids in a class relative to  the number of sparse paving matroids. 
\end{abstract}
\author{Rudi Pendavingh}
\author{Jorn van der Pol}
\address{Eindhoven University of Technology, Eindhoven, the Netherlands}
\email{R.A.Pendavingh@tue.nl, ~J.G.v.d.Pol@tue.nl}
\thanks{This research was supported by the Netherlands Organisation for Scientific Research (NWO) grant 613.001.211.}

\maketitle

\section{Introduction}
\subsection{Matroid asymptotics} This paper is concerned with the properties that are satisfied by most matroids as the size of their ground set tends to infinity. Precisely, for a matroid property $P$ we consider the asymptotic fraction
$$\frac{ |\{M\in \MM_n: M\text{ has property }P\}|}{|\MM_n|}$$
where $\MM_n$ denotes the set of matroids with ground set $E=\{1,\ldots, n\}$. If this fraction tends to 1 as $n\rightarrow \infty$, then we say that $P$ holds for (asymptotically) almost all matroids. 

Perhaps the first to make any comment on this issue were Crapo and Rota in the early 1970's, when they speculated  that ``paving matroids may predominate in any asymptotic enumeration of matroids'' in \cite{CrapoRotaBook}. 
In 2011, Mayhew, Newman, Welsh, and Whittle note that  ``even the most elementary questions about the properties of almost all matroids are currently unanswered''. They break the spell by proving a bound on the asymptotic proportion of connected matroids, and list several attractive and natural conjectures on the properties that asymptotically hold for almost all matroids in \cite{MayhewNewmanWelshWhittle2011}. 
Cloteaux proves in \cite{Cloteaux2010} that if $\epsilon>0$, then $b(M)\geq 1/n^{5/2+\epsilon}$ for almost all matroids on $n$ elements, where $b(M)$ denotes the proportion of bases among the subsets of the ground set of rank $r(M)$. 
Oxley, Semple, Warshauer,  and Welsh show in \cite{OxleySempleWarshauerWelsh2013} that asymptotically almost all matroids are 3-connected.  The present authors show in \cite{PvdP2015} that if $N$ is $U_{2,k}, U_{3,6}$, or one of several other matroids on 6 elements, then almost all matroids have $N$ as a minor, and in \cite{PvdP2015B} that almost all matroids on $n$ elements have rank between $n/2-\beta\sqrt{n}$ and $n/2+\beta\sqrt{n}$ whenever $\beta>\sqrt{\ln(2)/2}$. 

\subsection{A conjecture} A matroid $M$ of rank $r$ is a {\em sparse paving matroid} if and only if each dependent set of cardinality $r$ is a circuit-hyperplane of $M$. The following conjecture is a version of the  original statement of Crapo and Rota, and equivalent to a conjecture from \cite{MayhewNewmanWelshWhittle2011}.
\begin{conjecture}\label{conj:sp} As $n\rightarrow \infty$, asymptotically almost all matroids on $n$ elements are sparse paving.
\end{conjecture}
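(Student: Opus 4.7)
The plan is to establish $|\MM_n| \leq (1 + o(1))|\mathcal{SP}_n|$, where $\mathcal{SP}_n \subseteq \MM_n$ is the set of sparse paving matroids on $E = \{1, \ldots, n\}$. Sparse paving matroids of rank $r$ correspond bijectively to stable sets in the Johnson graph $J(n, r)$ on $\binom{E}{r}$ (edges join $r$-subsets at Hamming distance $2$), and the Graham--Sloane bound yields $\log |\mathcal{SP}_n| \geq \Omega\!\left(\binom{n}{\lfloor n/2 \rfloor}/n\right)$. The strategy is to construct an injection $M \mapsto (M^*, \sigma(M))$ with $M^* \in \mathcal{SP}_n$ and a ``defect'' $\sigma(M)$ whose information content is $o(\log|\mathcal{SP}_n|)$; the conjecture would then follow by a counting argument.

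The first step is to use the bound $1 - b(M) = O(\log(n)^3 / n)$ established earlier in the paper to restrict attention to matroids $M$ of rank $r$ whose set $\mathcal{D}(M) := \binom{E}{r} \setminus \BB(M)$ of non-basic $r$-subsets has size $k = O(\log(n)^3 / n) \cdot \binom{n}{r}$. I will call $\{D_1, D_2\} \subseteq \mathcal{D}(M)$ a \emph{conflict} when $|D_1 \cap D_2| = r - 1$; sparse pavingness of $M$ is precisely the absence of conflicts. Select a conflict-free subset $\mathcal{D}^* \subseteq \mathcal{D}(M)$ by a canonical rule (for instance, iteratively delete the lexicographically largest member of every remaining conflict), let $M^*$ be the sparse paving matroid whose non-bases are $\mathcal{D}^*$, and record $\sigma(M) := \mathcal{D}(M) \setminus \mathcal{D}^*$ as the defect.

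Two obstacles arise. The first --- that $M^*$ is actually a matroid --- reduces to checking that $\mathcal{D}^*$ is a stable set in $J(n,r)$, which holds by construction. The second, and much more serious, is bounding the number of admissible defects by $2^{o(\log |\mathcal{SP}_n|)}$. A naive count gives at most $\binom{\binom{n}{r}}{k}$ defects, whose logarithm is $O(\log(n)^4 / n) \cdot \binom{n}{r}$, exceeding $\log |\mathcal{SP}_n|$ by a polylogarithmic factor. Closing this gap is the heart of the conjecture, and would require exploiting the matroid axioms of $M$ to constrain $\sigma(M)$ structurally: each element of the defect must lie close to $\mathcal{D}^*$ in $J(n, r)$, and the configuration of conflicts must be compatible with exchange, so that the effective information content of $\sigma(M)$ is much smaller than a generic enumeration suggests. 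No sufficiently tight structural description of the defect is currently known, which is why Conjecture \ref{conj:sp} remains open; the main results of this paper can be read as incremental progress on quantifying exactly how structured $\mathcal{D}(M)$ must be, but shaving the last polylogarithmic factor appears to demand genuinely new ideas.
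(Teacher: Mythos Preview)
This statement is Conjecture~\ref{conj:sp}; the paper does not prove it and explicitly presents it as open. Your proposal is not a proof either, and you say so: you outline a natural injection-plus-defect strategy, compute that the naive defect count overshoots $\log s(n)$ by a $\mathrm{polylog}(n)$ factor, and correctly conclude that closing this gap is beyond current techniques. That assessment matches the paper's position.

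One point of comparison worth noting: your decomposition (greedily strip conflicts from $\mathcal{D}(M)$ to reach a sparse paving $M^*$, record the removed vertices as $\sigma(M)$) is different from the paper's canonical split of the non-bases into $W(M)$ (the circuit-hyperplanes, automatically a stable set in $J(n,r)$) and $U(M)$ (everything else). Theorem~\ref{thm:ns} shows that $U(M)$ ranges over at most $2^{O(\log(n)^3/n^2)\binom{n}{n/2}}$ values among almost all matroids --- already $o(\log s(n))$ bits --- but this does not settle the conjecture, because it does not force $U(M)=\emptyset$ for most $M$; it only says the \emph{set} of possible $U(M)$ is small, not that the particular value $\emptyset$ dominates the count. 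Your greedy $\mathcal{D}^*$ has the advantage of producing a genuine sparse paving target, at the cost of a defect that need not be contained in (or even adjacent to) $U(M)$ and is correspondingly harder to control. Both framings run into the same polylogarithmic wall, and the paper's Conjecture~\ref{conj:ns} records where the authors believe the next improvement should come from.
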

If this conjecture were true, then several other asymptotic properties of matroids would follow with little extra work, as it would suffice to establish the property for almost all sparse paving matroids. 
For example, almost all sparse paving matroids contain a fixed uniform matroid, are highly connected, have high girth, and the analogous statements for general matroids would follow. We next describe our intuition supporting Conjecture~\ref{conj:sp}, which we partially substantiated to obtain the results of this paper.

Consider the {\em Johnson graph} $J(E,r)$, whose vertices are the subsets of $E$ of cardinality $r$, and in which two vertices $X,Y$ are adjacent exactly if $|X\cap Y|=r-1$.
The Johnson graph will serve as an `ambient space' for all the matroids on ground set $E$ and of rank $r$. In what follows, we will write $G:=J(E,r)$. Given any matroid $M$ on $E$ of rank $r$, the set of bases $\BB$ of $M$ consists of subsets of $E$ of cardinality $r$, so that $\BB\subseteq V(G)$. Let $K:=V(G)\setminus\BB$ be the dependent $r$-sets or `non-bases' of $M$, and consider the induced subgraph $G[K]$. 
We state a straightforward consequence of the matroid basis-exchange axiom.
\begin{lemma} Let $G:=J(E,r)$, let $K\subseteq V(G)$, and let $C_1,\ldots, C_k$ be the components of $G[K]$. Then the following are equivalent:
\begin{enumerate}
\item $V(G)\setminus K$ is the set of bases of a matroid on $E$,
\item for each $i$, $V(G)\setminus C_i$ is the set of bases of a matroid on $E$.\noproof
\end{enumerate}
\end{lemma}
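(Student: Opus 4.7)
The plan is to reduce both implications to a local characterization of matroid basis sets: a nonempty collection $\BB \subseteq \binom{E}{r}$ is the basis set of a matroid on $E$ if and only if for every pair $B_1, B_2 \in \BB$ with $|B_1 \triangle B_2| = 4$ (that is, at distance $2$ in $G = J(E,r)$) and every $e \in B_1 \setminus B_2$, there exists $f \in B_2 \setminus B_1$ with $(B_1 \setminus e) \cup f \in \BB$. This local-to-global reduction of the basis-exchange axiom is standard (equivalent to the characterization of matroid polytopes as $0/1$-polytopes whose edges are parallel to $e_i - e_j$, due to Gelfand--Goresky--MacPherson--Serganova), and I will invoke it without proof. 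In the local picture, write $B_1 = X \cup \{a,b\}$ and $B_2 = X \cup \{c,d\}$ with $|X| = r-2$ and $a,b,c,d$ distinct and disjoint from $X$; for $e = a$ the two candidate midpoints are $N_1 = X \cup \{b,c\}$ and $N_2 = X \cup \{b,d\}$, which are adjacent in $G$ (they differ only in $c$ versus $d$) and each adjacent to both $B_1$ and $B_2$.

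For $(1) \Rightarrow (2)$, I fix $i$ and verify the local exchange axiom for $V(G) \setminus C_i$. Take $B_1, B_2 \in V(G) \setminus C_i$ at Johnson distance $2$. If $B_1, B_2 \in V(G) \setminus K$, then $(1)$ produces a midpoint $N_j \in V(G) \setminus K \subseteq V(G) \setminus C_i$ directly. Otherwise, without loss of generality $B_1 \in K$, so $B_1$ lies in some component $C_{i'} \neq C_i$; any midpoint of the exchange that happens to lie in $K$ is adjacent to $B_1$ in $G[K]$ and therefore belongs to the same component $C_{i'}$, hence outside $C_i$. Either way, both candidate midpoints lie in $V(G) \setminus C_i$ and the exchange is automatic.

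For $(2) \Rightarrow (1)$, take $B_1, B_2 \in V(G) \setminus K$ at distance $2$ and suppose for contradiction that both $N_1, N_2 \in K$. Being adjacent in $G[K]$, they lie in a common component $C_i$; but $B_1, B_2 \in V(G) \setminus K \subseteq V(G) \setminus C_i$, which by $(2)$ is a matroid basis set, so its local exchange axiom forces at least one of $N_1, N_2$ out of $C_i$, a contradiction. Hence the local exchange holds for $V(G) \setminus K$. The main obstacle is the local characterization of matroid basis sets: without it one would have to argue about $B_1, B_2$ at arbitrary Johnson distance, where the component structure of $G[K]$ no longer has a clean local interpretation. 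Once that characterization is in hand, the entire proof rests on the elementary observation that two adjacent vertices of $G[K]$ lie in the same component.
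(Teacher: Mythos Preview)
The paper does not actually prove this lemma: it is tagged with \verb|\noproof| and introduced only as ``a straightforward consequence of the matroid basis-exchange axiom,'' so there is no argument in the paper to compare against.

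Your proof is correct. The two observations that drive it---that both candidate midpoints $N_1,N_2$ are neighbours of $B_1$ (so if $B_1\in C_{i'}$ then any midpoint landing in $K$ lands in $C_{i'}$), and that $N_1,N_2$ are adjacent to each other (so if both land in $K$ they lie in a common component)---are precisely what is needed once the distance-$2$ characterization of basis systems is granted. One small omission: you list nonemptiness as part of that characterization but never verify it. For $(1)\Rightarrow(2)$ it is immediate, since $V(G)\setminus C_i\supseteq V(G)\setminus K\neq\emptyset$. For $(2)\Rightarrow(1)$, note that $J(E,r)$ is connected; hence if $V(G)\setminus K$ were empty then $K=V(G)$ would have a single component $C_1=V(G)$, making $V(G)\setminus C_1=\emptyset$ and contradicting~(2).

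As a side remark, your claim that the local characterization is indispensable is only half right. For $(2)\Rightarrow(1)$ one can in fact argue at arbitrary Johnson distance: for fixed $e\in B_1\setminus B_2$ the candidate exchanges $\{(B_1-e)+f:f\in B_2\setminus B_1\}$ already form a clique in $G$, so if they all lie in $K$ they lie in a single component $C_i$, and applying (2) to $B_1,B_2\in V(G)\setminus C_i$ gives the contradiction. It is in $(1)\Rightarrow(2)$, specifically the case $B_1\notin K$ but $B_2\in K$, that the reduction to distance~$2$ genuinely earns its keep.
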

This lemma implies that given our matroid $M$, we can create other matroids by removing components of $G[K]$, or introducing new components $C$ as long as such $C$ are the sets of non-bases of a matroid on $E$. If $C=\{X\}$ is a singleton, then $X$ is a circuit-hyperplane of $M$, and the removal of such $C$ from the non-bases is known as {\em relaxing} a circuit-hyperplane. The reverse operation, introducing a singleton as a component, will also always yield another matroid, as any singleton $C$ is the set of non-bases of a matroid. Intuitively, bigger sets $C$ will be harder to introduce as a component: they are more likely to fuse with existing components, and perhaps more significantly, not just any non-singleton $C$ is the set of non-bases of a matroid on $E$.

In this paper we crucially distinguish between the set of circuit-hyperplanes $W\subseteq K$ and the remaining set of dependent sets $U=K\setminus W$, the non-bases in `big components'. Being a set of singleton components, $W$ is a stable set of $G$. Replacing $W$ with any other stable set $W'$ of $G$ which is disjoint from $U\cup N(U)$ will produce the non-bases $K'=U\cup W'$ of another matroid. It will be intuitively clear that this construction may yield a significant number of distinct matroids, the more so if $U$ does not cover too much of $G$. 

If we subdivide all the matroids of a given rank $r$ on $E$ into cohorts according to the cardinality of $U$, the number of matroids in each cohort will be roughly the number of ways to cover $|U|$ vertices of $G$ with big components, times the number of ways to position stable sets $W$ away from such $U$. As the cardinality of $U$ increases, the number of compatible $W$ decreases. In view of the inertia of big components compared to singletons, we do not expect that the increasing numbers of possible $U$ will compensate this decrease.   This suggests that the cohorts where $|U|$ is small will be more populated. Conjecture \ref{conj:sp} takes the extreme, but not unlikely position that eventually as $|E|\rightarrow \infty$, almost all matroids will be in the cohort where $|U|=0$.

\subsection{Our results} For any matroid $M$, let $W(M)$ denote the set of circuit-hyperplanes of $M$ and let $U(M)$ denote the remaining non-bases of $M$. The central result of this paper is the following.\protect\footnote{Throughout the paper, we will use~$\log$ to denote the base-2 logarithm, and~$\ln$ to denote the natural logarithm.}
\begin{figure}
\includegraphics[width=.5\textwidth]{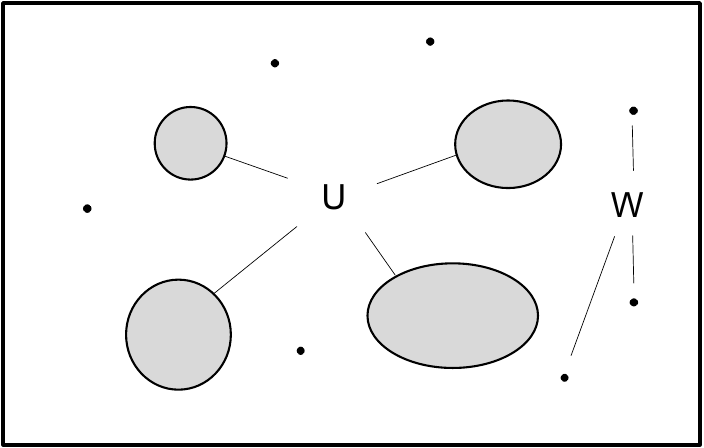}
\caption{\label{fig:mat} The non-bases $K$ of a matroid induce a subgraph of the Johnson graph. }
\end{figure}
\begin{theorem} \label{thm:ns}There is a  $c>0$ and a class of matroids $\NS\subseteq \MM$ containing almost all matroids, such that 
$$\log \left|\{U(M): M\in  \MM_{n,r}\cap \NS\}\right|\leq c \frac{\log(n)^3}{n^2}\binom{n}{n/2}$$
for all $0\leq r\leq n$.
\end{theorem}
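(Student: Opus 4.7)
The plan is to construct a compressed encoding of $U(M)$ whose length we control on a sufficiently large subclass $\NS\subseteq \MM$, and then to bound $|\{U(M):M\in\MM_{n,r}\cap \NS\}|$ by $2^{L(M)}$ where $L(M)$ is the encoding length.

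I would first stratify $U(M)$ according to what kind of circuit each non-basis reveals. Call $X\in U(M)$ of \emph{type I} if $X$ contains a circuit of $M$ of size strictly less than $r$, and of \emph{type II} otherwise, and write $U(M)=U_1(M)\cup U_2(M)$ accordingly. A type-II element must itself be a circuit of size $r$ (the only circuit it contains being itself) while still lying in a non-singleton component of $G[K(M)]$; these are the $r$-circuits that are adjacent in $G$ either to another $r$-circuit or to a type-I non-basis. The type-I part is determined entirely by the family $\mathcal{C}^{<}(M):=\{C\text{ circuit of }M:|C|<r\}$ via
\[ U_1(M)\ =\ \bigcup_{C\in \mathcal{C}^{<}(M)}\{X\subseteq E:C\subseteq X,\ |X|=r\}, \]
so it suffices to encode the pair $(\mathcal{C}^{<}(M),U_2(M))$.

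Next I would define $\NS$ by conditions tuned to make such an encoding short: namely, that the number of circuits of $M$ of each length $k<r$ is bounded by an explicit function of $n$ and $k$ (decaying rapidly for very small $k$, so that an effective girth of order $\log n$ emerges), and that $|U_2(M)|$ is comparatively small. Verifying that $\NS$ contains almost all matroids is where the compression-versus-sparse-paving comparison promised in the abstract enters: if $M$ has too many short circuits, or too many $r$-circuits massed into non-singleton components of $G[K(M)]$, then the collection of alternative matroids produced by swapping out such circuits (consistency checked via the lemma about components of $G[K]$) is much richer than the family of sparse paving matroids dominating the count, forcing the ``bad'' part of $\MM_n$ to be negligible.

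For $M\in\NS$ I would then write down the elements of $\mathcal{C}^{<}(M)$, paying $\lceil\log\binom{n}{k}\rceil\leq k\log n$ bits per circuit of size $k$, and list the members of $U_2(M)$ at a cost of $\lceil\log\binom{n}{r}\rceil\leq n$ bits each. The total description length is then
\[ L(M)\ \leq\ \sum_{k=1}^{r-1}\bigl(\#\text{circuits of size }k\text{ in }M\bigr)\cdot k\log n\ +\ |U_2(M)|\cdot n, \]
and the bounds built into the definition of $\NS$ should drive this below $c\log(n)^3/n^2\cdot\binom{n}{n/2}$ uniformly in $r$, yielding the theorem.

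The principal obstacle is the calibration of $\NS$. Simultaneously bounding the number of circuits at each length and the size of $U_2(M)$, while keeping $\NS$ large enough to capture almost all matroids, is delicate, and the three factors of $\log n$ in the target bound strongly suggest the argument must combine three independent logarithmic cushions (roughly: an effective girth threshold of order $\log n$, the $k\log n$ per-circuit cost, and a summation over lengths $k$ in an interval of length $O(\log n)$). The control of $U_2(M)$ is particularly subtle, since type-II non-bases are not captured by a girth or short-circuit count and require a separate reduction to sparse paving matroids (for which $U_2$ is empty by definition) to show that large clusters of adjacent $r$-circuits are atypical.
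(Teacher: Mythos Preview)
Your proposal has a genuine gap, and it sits exactly where you flag the ``principal obstacle'': the calibration of $\NS$. You define $\NS$ by imposing upper bounds on the number of circuits of each length $k<r$ and on $|U_2(M)|$, and then hope to show that almost all matroids lie in $\NS$. But the only tools available for bounding circuit counts or $|U_2|$ in a typical matroid are consequences of the very theorem you are trying to prove (the girth bound, the bound on $d(M)$, etc., are all \emph{derived from} this theorem in the paper). Your suggested mechanism --- ``swapping out such circuits'' to produce many alternative matroids --- does not work as stated: circuits of size $<r$ are not components of $G[K]$ and cannot be exchanged independently (the circuit-elimination axiom obstructs this), so the component lemma from the introduction does not give you the needed double-counting. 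You never obtain the key dichotomy ``either $U(M)$ has a short description, or $M$ itself has many descriptions,'' which is what makes the argument non-circular.

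The paper's route is entirely different and bypasses these difficulties. Rather than encoding $U(M)$ via circuit lists, it refines the Kleitman--Winston compression: for any loopless/coloopless $M\in\MM_{n,r}$ it produces a pair $(S,\ZZ)$ with $|S|\le\sigma_{n,r}\binom{n}{r}$ and $|\ZZ|\le 2|S|$, together with a stable set $T\cup W$ and an integer $t$, such that $U(M)$ is determined by $(S,\ZZ,T)$ with $|T|=t$, and crucially there are at least $2^t$ valid choices of $T$, each yielding a distinct full encoding $(S,\ZZ,T\cup W)$ of $M$. The class $\NS$ is then simply $\{M:t(M)\le 2\zeta(n)\}$ where $\zeta(n)=O(\log(n)^2 n^{-2}\binom{n}{n/2})$. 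Matroids outside $\NS$ each have $\ge 2^{2\zeta(n)}$ encodings drawn from a pool of size $\le z(n,r)\cdot s(n,r)$, so a direct double count shows they are rare; matroids inside $\NS$ have $U(M)$ determined by a triple $(S,\ZZ,T)$ with $|T|\le 2\zeta(n)$, and counting such triples gives the $O(\log(n)^3 n^{-2}\binom{n}{n/2})$ bound. No a priori control on circuit counts, girth, or $|U_2|$ is needed --- the dichotomy is intrinsic to the encoding algorithm.
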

If Conjecture \ref{conj:sp} is true, then Theorem \ref{thm:ns}  follows:  taking $\NS$ to be the class of sparse paving matroids, we would obtain $\{U(M): M\in  \MM_{n,r}\cap \NS\}=\{\emptyset\}$. The bound of Theorem \ref{thm:ns} is very weak by comparison, but the  number is still vanishing compared to the best known lower bound on the number of matroids on $n$ elements, $\log m(n)\geq \binom{n}{n/2}/n$. Since each matroid $M$ is determined by the pair $(U(M), W(M))$, we obtain the following generic tool for showing that a class of matroids is thin, i.e.\ that almost all matroids are outside the class.
\begin{corollary}\label{cor:thin}
	There is a~$c > 0$ such that if~$\mathcal{M} \subseteq \MM$ is a class of matroids satisfying
	$$
		\log |\{W(M) : M \in \mathcal{M} \cap \MM_{n,r}\}| \le \left(1-c\frac{\log(n)^3}{n}\right) \log m(n),
	$$
	for all $0 \le r \le n$ and $n$ sufficiently large, then~$\mathcal{M}$ is thin.
\end{corollary}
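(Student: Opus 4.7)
The plan is to exploit the fact that every matroid $M$ is determined by the pair $(U(M),W(M))$, in combination with Theorem~\ref{thm:ns}, to reduce matters to the hypothesis on $W$. Since Theorem~\ref{thm:ns} produces a class $\NS$ containing almost all matroids, it suffices to show that $\mathcal{M}\cap\NS$ is thin; the remaining matroids in $\mathcal{M}\setminus\NS$ sit inside the thin class $\MM\setminus\NS$ and are therefore thin automatically.

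Fix a rank $r$. The map $M \mapsto (U(M),W(M))$ being injective on $\MM_{n,r}$, one immediately has
$$|\mathcal{M}\cap\NS\cap\MM_{n,r}|\;\le\;|\{U(M):M\in\MM_{n,r}\cap\NS\}|\cdot|\{W(M):M\in\mathcal{M}\cap\MM_{n,r}\}|,$$
where the second factor is the set appearing in the hypothesis of the corollary (one may freely drop the restriction to $\NS$, since it only enlarges that set). Taking logarithms and applying Theorem~\ref{thm:ns} (with its constant, call it $c_1$) and the hypothesis of the corollary (with constant $c$ still to be chosen) yields
$$\log|\mathcal{M}\cap\NS\cap\MM_{n,r}|\;\le\;c_1\frac{\log(n)^3}{n^2}\binom{n}{n/2}+\left(1-c\frac{\log(n)^3}{n}\right)\log m(n).$$

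Next I would bring both error terms onto a common scale using the Knuth-type bound $\log m(n)\ge \binom{n}{n/2}/n$ already cited in the excerpt: dividing by $n$ gives $\binom{n}{n/2}/n^{2}\le \log m(n)/n$, so the first summand is at most $c_1(\log(n)^3/n)\log m(n)$. Picking $c:=c_1+1$ makes the coefficients of $(\log(n)^3/n)\log m(n)$ combine to $-1$, and summing over the $n+1$ possible ranks gives
$$\log\frac{|\mathcal{M}\cap\NS|}{m(n)}\;\le\;\log(n+1)-\frac{\log(n)^3}{n}\log m(n).$$
Since $\log m(n)\ge\binom{n}{n/2}/n=\Theta(2^n/n^{3/2})$, the right-hand side tends to $-\infty$, so $\mathcal{M}\cap\NS$ is thin and hence so is $\mathcal{M}$.

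There is no genuine obstacle here: the proof is essentially bookkeeping. The only point that requires care is to choose $c$ strictly larger than the constant from Theorem~\ref{thm:ns} and to invoke Knuth's bound $\log m(n)\ge\binom{n}{n/2}/n$ in the right direction, so that the $U$-count supplied by the theorem and the slack in the $W$-count assumed in the hypothesis are measured on the same scale and can be added.
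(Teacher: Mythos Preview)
Your proposal is correct and follows essentially the same approach as the paper's own proof (Lemma~\ref{lem:thin}): reduce to $\mathcal{M}\cap\NS$, use the product bound $|\mathcal{M}\cap\NS\cap\MM_{n,r}|\le |\{U(M)\}|\cdot|\{W(M)\}|$, convert the $U$-bound from Theorem~\ref{thm:ns} into the scale of $\log m(n)$ via Knuth's lower bound, and choose $c$ large enough that the net exponent beats the factor $n+1$ from summing over ranks. The only cosmetic slip is that in your final display ``$|\mathcal{M}\cap\NS|$'' should read ``$|\mathcal{M}\cap\NS\cap\MM_n|$''.
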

All our asymptotic results will essentially rely on this corollary, and we develop two methods to prove the bound on the variation in $W(M)$ within a class.

The first method  is derived from Shearer's Entropy Lemma, and proceeds from the assumption that $U(M)$ is relatively large.
Using this result, we will be able to show that $d(M)$, the fraction of dependent sets of cardinality $r(M)$,  is $O(\log(n)^3/n)$ for asymptotically almost all  matroids. 
Counting dependent sets which necessarily exist in a matroid without a given uniform matroid minor, we also show that almost all matroids on $n$ elements have a $U_{k,2k}$-minor whenever $k\leq O(\log(n))$.
Counting the number of dependent sets  in a matroid with a small circuit, or a small-order separation, we establish that asymptotically almost all matroids on $n$ elements have girth $\geq \Omega(\log(n))$ and connectivity $\geq \Omega(\sqrt{\log(n)})$. The latter proves Conjecture 1.5 of \cite{MayhewNewmanWelshWhittle2011}, which states that for any fixed $k$, almost all matroids are $k$-connected.

The second, more straightforward way to bound the variation in $W(M)$ is by assuming a uniform upper bound on the cardinality of each $W(M)$. Using an extension of a theorem from \cite{BPvdP2015}, which bounds the number of stable sets in a regular graph, we are able to show that $d(M)\geq \Omega(1/n)$ for almost all matroids. This will imply that most matroids do not arise by truncation. 


Theorem \ref{thm:ns} is derived from a method for writing a compressed description of matroids. This method was first used in \cite{BPvdP2015} to prove an absolute upper bound on the number $m(n)$ of matroids on $n$ elements, and further developed in \cite{PvdP2015B} to yield an upper bound on $m(n)$ relative to the number of sparse paving matroids $s(n)$. We have now refined this procedure to yield, given any matroid $M$, either an encoding of $M$ which also very concisely describes $U(M)$, or very many distinct encodings of $M$. Since the overall number of encodings is bounded close to the number of sparse paving matroids, the matroids which have many encodings will be few by comparison. The remaining matroids, for which $U(M)$ can be described compactly, will constitute the class $\NS$ in Theorem \ref{thm:ns}.

\subsection{Structure of the paper} In Section \ref{sec:pre}, we give preliminaries on matroids, the Johnson graph, and review the work on counting stable sets and matroids from \cite{BPvdP2015} and \cite{PvdP2015B}. We make slight extensions and modifications to these results, which will be of use later in the paper.  Section \ref{sec:concise} is devoted to the proof of Theorem \ref{thm:ns}. In Section \ref{sec:few}, we show that in asymptotically almost all matroids, only an $O(\log(n)^3/n)$ fraction of the $r$-sets may be dependent. In Section \ref{sec:many}, we show that in asymptotically almost all matroids, at least a $\Omega(1/n)$ fraction of the $r$-sets must be dependent. In Section \ref{sec:discussion}, we present conjectures and discuss openings for future work.


\section{\label{sec:pre}Preliminaries}
\subsection{Matroids} We refer to Oxley's book \cite{OxleyBook} as a general reference on matroids. We adhere to the notation used in that book, with a few exceptions. 

Let $[n]:=\{1,\ldots,n\}$. We write
$$\MM_{n,r}:=\{ M\text{ a matroid}: E(M)=[n], r(M)=r\}, ~\MM_n:=\bigcup_r\MM_{n,r},  \text{ and }\MM:=\bigcup_n \MM_n.$$
For the cardinalities of these classes, we denote $m(n,r):=|\MM_{n,r}|$ and $m(n):=|\MM_n|$.

A matroid $M$ of rank $r$ is {\em paving} if all its circuits have cardinality at least $r$, and $M$ is {\em sparse paving} if both $M$ and $M^*$ are paving. We write 
$$s(n,r):=\left|\{ M\in \MM_{n,r}: M\text{ is sparse paving}\}\right|, \text{ and }s(n):=\left|\{ M\in \MM_{n}: M\text{ is sparse paving}\}\right|.$$

Given a set of matroids $\mathcal{M}\subseteq \MM$, we say that {\em (asymptotically) almost all matroids are in} $\mathcal{M}$ if 
$$\frac{|\mathcal{M}\cap \MM_n|}{|\MM_n|}\rightarrow 1\text{ as } n\rightarrow \infty$$
and that $\mathcal M$ is {\em thin} if the same limit is 0. 

We will apply the following result, due to Mayhew, Newman, Welsh, and Whittle \cite{MayhewNewmanWelshWhittle2011}.
\begin{theorem}\label{thm:simple} Almost all matroids are loopless and coloopless.
\end{theorem}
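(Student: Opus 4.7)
The strategy is to bound the number of matroids with a loop and use matroid duality to handle coloops. Since the dual map $M \mapsto M^*$ is a bijection on $\MM_n$ that exchanges loops with coloops, it suffices to show that asymptotically almost all matroids are loopless; applying this conclusion to the duals then yields the coloopless statement.

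For the counting, observe that if $e \in [n]$ is a loop of $M \in \MM_n$, then $M$ decomposes as $M = (M \setminus e) \oplus U_{0,1}$ (a loop at $e$), so $M$ is uniquely determined by its deletion $M \setminus e \in \MM_{n-1}$. Consequently $|\{M \in \MM_n : e \text{ is a loop of } M\}| = m(n-1)$ for each fixed $e$, and by a union bound
$$|\{M \in \MM_n : M \text{ has a loop}\}| \leq n \cdot m(n-1).$$
The theorem thus reduces to establishing the asymptotic estimate $n \cdot m(n-1) = o(m(n))$, i.e.\ that $\log m(n) - \log m(n-1)$ grows faster than $\log n$.

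The plan for this final step is to show that each matroid $M' \in \MM_{n-1}$ admits many distinct single-element extensions to $\MM_n$. Since $m(n) = \sum_{M' \in \MM_{n-1}} E(M')$, where $E(M')$ counts the matroids $M \in \MM_n$ with $M \setminus n = M'$, any lower bound $E(M') = \omega(n)$ that holds for almost all $M'$ immediately yields $m(n) = \omega(n \cdot m(n-1))$. The naive extensions --- adding $n$ as a loop, as a coloop, or parallel to any non-loop element of $M'$ --- already contribute roughly $n$ extensions apiece; the additional super-linear multiplicative factor should come from the more intricate modular-cut extensions, which for typical $M'$ (for instance, sparse paving matroids of rank near $n/2$) are numerous.

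The main obstacle lies in securing the quantitative bound $E(M') = \omega(n)$ for a sufficiently large fraction of matroids $M'$: the sparse-paving lower bound $\log m(n) \geq \binom{n}{\lfloor n/2 \rfloor}/n$ cited in the introduction and the currently available upper bounds on $m(n-1)$ fall short by roughly a factor of $\log n$ in the exponent, so a direct manipulation of these estimates will not by itself yield the needed comparison. A careful accounting of single-element extensions, or equivalently an argument that exploits the rich structure of typical matroids in the spirit of the sparse-paving viewpoint developed in \cite{BPvdP2015}, is required to close this gap; once the comparison $n \cdot m(n-1) = o(m(n))$ is in hand, the conclusion is immediate.
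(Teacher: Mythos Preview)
The paper does not prove this statement at all: Theorem~\ref{thm:simple} is simply quoted from Mayhew, Newman, Welsh, and Whittle~\cite{MayhewNewmanWelshWhittle2011}, with no argument given. So there is no ``paper's own proof'' to compare against, only the original source.

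As for your proposal itself, the reduction via duality and the union bound $|\{M\in\MM_n:\text{$M$ has a loop}\}|\le n\,m(n-1)$ are correct and standard, and reducing to $n\,m(n-1)=o(m(n))$ is exactly the right target. However, your write-up is not a proof: you explicitly acknowledge that the available bounds on $m(n)$ and $m(n-1)$ do not separate the two quantities, and then you stop, saying only that ``a careful accounting of single-element extensions \ldots\ is required to close this gap.'' That is a description of what remains to be done, not a completion of the argument. The entire content of the theorem lies in that missing step.

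Concretely, the lower bound $\log m(n)\ge \binom{n}{n/2}/n$ and the upper bound $\log m(n-1)\le (2+o(1))\binom{n-1}{(n-1)/2}/(n-1)$ are of the same order (since $\binom{n}{n/2}\sim 2\binom{n-1}{(n-1)/2}$), so, as you observe, comparing them directly yields nothing. The proof in~\cite{MayhewNewmanWelshWhittle2011} supplies precisely the missing ingredient: a quantitative lower bound on the number of single-element extensions of an arbitrary matroid, strong enough to force $m(n)/m(n-1)$ to grow superpolynomially. Your naive count of $\approx n$ extensions (loop, coloop, parallel classes) is too weak by itself; one needs to exploit modular cuts more seriously. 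Until you either reproduce that lemma or find an alternative route to $n\,m(n-1)=o(m(n))$, the argument is incomplete.
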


\subsection{The Johnson graph} If $E$ is a finite set and $r\in \N$, then we write 
$$\binom{E}{r} : = \{ X \subseteq E : |X| = r \}.$$
The Johnson graph $J(E,r)$ is defined as the graph with vertex set $\binom{E}{r}$, in which two vertices $X,Y\in \binom{E}{r}$ are adjacent if and only if $|X\cap Y|=r-1$. 
We abbreviate $J(n,r) := J([n],r)$. If $X\in \binom{E}{r}$, then 
$$N(X):=\{X-e+f: e\in X, f\in E\setminus X\}$$
will denote the neighborhood of $X$ in the Johnson graph.

The Johnson graphs are relevant to matroid theory by the following result.
\begin{lemma} Let $S\subseteq \binom{E}{r}$. Then $S$ is a stable set of $J(E,r)$ if and only if  $\binom{E}{r}\setminus S$ is the set of bases of a sparse paving matroid on $E$.\noproof\end{lemma}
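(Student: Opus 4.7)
The plan is to prove both directions directly, using the characterization of sparse paving given at the start of Section~2.1: a matroid of rank $r$ is sparse paving iff every dependent $r$-set is a circuit-hyperplane.

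For the forward direction, set $\BB := \binom{E}{r}\setminus S$. First I would verify the basis exchange axiom. Given $B_1,B_2\in \BB$ and $e\in B_1\setminus B_2$, suppose for contradiction that $B_1-e+f\in S$ for every $f\in B_2\setminus B_1$. The case $|B_2\setminus B_1|=1$ is immediate, since then $B_1-e+f=B_2\in \BB$ gives an exchange directly. If $|B_2\setminus B_1|\geq 2$, pick any two distinct $f_1,f_2\in B_2\setminus B_1$; then $B_1-e+f_1$ and $B_1-e+f_2$ both lie in $S$ and share the $(r-1)$-set $B_1-e$, so they are adjacent in $J(E,r)$, contradicting stability of $S$. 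Hence $\BB$ is the basis family of a matroid $M$ of rank $r$. To see that $M$ is sparse paving, take any $X\in S$. Stability of $S$ forces $X-e+f\in \BB$ for every $e\in X$ and every $f\in E\setminus X$. Consequently each $X-e$ is independent (so $X$, being a dependent $r$-set all of whose proper subsets are independent, is a circuit) and each $X+f$ has rank $r$ (so $X$ is a hyperplane). Since every dependent $r$-set of $M$ is a non-basis and hence lies in $S$, each such set is a circuit-hyperplane.

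For the converse, suppose $\BB$ is the basis family of a sparse paving matroid $M$ and set $S:=\binom{E}{r}\setminus \BB$. If $X,Y\in S$ were adjacent in $J(E,r)$, both would be circuit-hyperplanes with $|X\cap Y|=r-1$. Write $Y=X-e+g$. Then $X\cap Y\subsetneq X$ is a proper subset of the circuit $X$ and is therefore independent, giving $r(X\cap Y)=r-1$; moreover $X\cup Y=X+g$ has rank $r$ because $X$ is a hyperplane and $g\notin X$. Submodularity then gives
$$r(X\cup Y)+r(X\cap Y)\leq r(X)+r(Y)=2(r-1),$$
forcing $r(X\cup Y)\leq r-1$, a contradiction.

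There is no real obstacle; the lemma is essentially a direct unpacking of the definition of sparse paving. The only step requiring any ingenuity is the observation in the forward direction that two simultaneously failed exchange attempts from the same pivot $B_1-e$ produce two adjacent vertices of $J(E,r)$ both in $S$, which is precisely what stability forbids.
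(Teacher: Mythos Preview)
Your proof is correct. The paper does not supply a proof of this lemma at all (it is marked with \noproof\ and treated as folklore), so there is nothing to compare against; your argument is exactly the kind of direct verification one would expect. One very minor point: in the forward direction you should note that $\BB\neq\emptyset$, which is needed for $\BB$ to be the set of bases of a matroid. This holds automatically whenever $0<r<|E|$, since then $J(E,r)$ has edges and a stable set cannot be all of $\binom{E}{r}$; the degenerate cases $r=0$ and $r=|E|$ are harmless in the paper's applications.
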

It follows that $s(n,r)$ equals the number of stable sets of the Johnson graph $J(n,r)$.
Using this lemma, Knuth \cite{Knuth1974} proved a lower bound on the number of sparse paving matroids based on a construction for stable sets in the Johnson graph, which was subsequently improved by Graham and Sloane \cite{GrahamSloane1980}:
\begin{theorem} \label{thm:knuth} For any $0<r<n$, we have $s(n,r)\geq 2^{\binom{n}{r}/n}.$\noproof\end{theorem}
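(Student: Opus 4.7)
The plan is to give a short coloring argument: exhibit a proper coloring of $J(n,r)$ with $n$ colors, apply pigeonhole to find a large color class, and then take all $2^{\binom{n}{r}/n}$ of its subsets as independent sets.

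Concretely, I would define the coloring $c : \binom{[n]}{r} \to \Z/n\Z$ by
\[
c(X) \;:=\; \sum_{i\in X} i \pmod{n}.
\]
First I would check this is a proper coloring of $J(n,r)$. If $X,Y$ are adjacent vertices, then $X\setminus Y=\{i\}$ and $Y\setminus X=\{j\}$ for distinct $i,j\in[n]$, so $c(X)-c(Y)\equiv i-j\pmod n$. Since $i\neq j$ and $|i-j|\leq n-1$, this residue is nonzero, hence $c(X)\neq c(Y)$. Consequently each color class $c^{-1}(k)$ is a stable set of $J(n,r)$.

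Second, I would apply pigeonhole: since the $n$ color classes partition $\binom{[n]}{r}$, some class $S:=c^{-1}(k^\ast)$ has size at least $\binom{n}{r}/n$. Because any subset of a stable set is stable, all $2^{|S|}\geq 2^{\binom{n}{r}/n}$ subsets of $S$ are stable sets of $J(n,r)$, and by the lemma preceding the theorem each of them is the set of non-bases of a distinct sparse paving matroid of rank $r$ on $[n]$. This yields $s(n,r)\geq 2^{\binom{n}{r}/n}$.

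There is no real obstacle here; the only subtle point is checking that distinct elements of $[n]$ have distinct residues modulo $n$, which is immediate from $1\leq i,j\leq n$ and $i\neq j$ (so $i-j\not\equiv0\pmod n$). The argument gives an explicit construction rather than a probabilistic one, which matches the form of the statement.
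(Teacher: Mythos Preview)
Your argument is correct and is precisely the Graham--Sloane construction that the paper cites (in lieu of a written-out proof) for this bound: color $\binom{[n]}{r}$ by the residue of the element sum modulo $n$, observe that adjacent vertices differ in residue, and take subsets of a largest color class. There is nothing to add.
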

Hence $s(n)\geq s(n,\lfloor n/2\rfloor)\geq  2^{\binom{n}{n/2}/n}$, a fact which we will repeatedly use in this paper for showing that a class of matroids on $n$ elements is small compared to $m(n)$, using that $s(n)\leq m(n)$.

\subsection{Binomial coefficients}
We use the following standard bound on sums of binomial coefficients (see \cite[Prop.~1.4]{Jukna2001}):
$$\sum_{i=0}^k\binom{n}{i}\leq \left(\frac{\e n}{k}\right)^k.$$ 
We also apply the following well-known bounds on  the central binomial coefficient, that are easily proved using Stirling's approximation,
$$\sqrt{\frac{2}{\pi}}\frac{2^n}{\sqrt{n}}\left(1-\frac{1}{8n}\right)\leq \binom{n}{n/2} \leq \sqrt{\frac{2}{\pi}}\frac{2^n}{\sqrt{n}}\text{ for all $n\geq 1$}.$$
 The shorthand $\binom{n}{n/2}:=\binom{n}{\lfloor n/2\rfloor}$ is used throughout this paper.

\subsection{Counting stable sets in regular graphs}
Let $i(G):=|\{S\subseteq V(G): S\text{ a stable set of }G\}|$. We describe a method for bounding $i(G)$ for regular graphs $G$, which was adapted in \cite{BPvdP2015} from a method due to Kleitman and Winston \cite{KleitmanWinston1982}. For further applications of this method see the survey of Samotij \cite{Samotij2014}. 

If $G=(V,E)$ is a graph and $A\subseteq V$, we write $G[A]$ for the subgraph of $G$ induced on $A$ and $\Delta G$ for the maximum degree of a vertex of $G$. The {\em eigenvalues} of $G$ are the eigenvalues of the adjacency matrix of $G$.

In the statement of Lemma~\ref{lem:KW1} below, and throughout the remainder of the paper, we will use the phrase ``$X$ is uniquely determined by~$Y$'' in the following precise way. Whenever the phrase is used, there will be a free variable (describing the context) as well as a bound variable. We say that~$X$ is uniquely determined by~$Y$ if there is a function~$f$ such that~$X = f(Y)$. The function~$f$ is allowed to depend on the free variable, but not on the bound variable.

\begin{lemma}\label{lem:KW1}
	Let $G=(V,E)$ be regular of degree $d\geq 1$, with smallest eigenvalue $-\lambda$, and let~$V$ be linearly ordered.
	For each $K\subseteq V$, there exist sets $S, A \subseteq V$ such that
	\begin{enumerate}
		\item $|S| \le \left\lceil\sigma|V|\right\rceil$, $|A| \le \alpha |V|$;
		\item $S \subseteq K \subseteq S \cup N(S) \cup A$;
		\item $A$ is uniquely determined by~$S$.
	\end{enumerate}
	Here, $\sigma := \ln(d+1)/(d+\lambda)$ and $\alpha := \lambda/(d+\lambda)$.
\end{lemma}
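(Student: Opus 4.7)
My plan is to apply a Kleitman--Winston-style iterative selection procedure. Given $K$, I initialize $S := \emptyset$ and $A := V$, then repeat the following while $|A| > \alpha|V|$: let $v$ be the vertex of maximum degree in $G[A]$, with ties broken by the linear order on $V$; if $v \in K$, add $v$ to $S$ and set $A := A \setminus (\{v\} \cup N_G(v))$, otherwise set $A := A \setminus \{v\}$.

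The stopping criterion yields $|A| \le \alpha|V|$ directly, and the invariants $S \subseteq K$ and $K \subseteq S \cup N_G(S) \cup A$ are preserved at each iteration by a straightforward induction. For property (3), note that the only input-dependent branch in the algorithm is the test $v \in K$; since $S$ records exactly the vertices passing this test (as $S \subseteq K$), one can replay the procedure knowing only $S$ by using the equivalent test $v \in S$. Hence $A$ is uniquely determined by $S$ (given the fixed graph $G$ and linear order).

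The main obstacle is the bound $|S| \le \lceil \sigma|V|\rceil$, for which I plan to leverage the smallest eigenvalue $-\lambda$ via a Hoffman-type argument. Consider the matrix $B := A_G + \lambda I - \frac{d+\lambda}{|V|} J$, where $A_G$ is the adjacency matrix of $G$ and $J$ the all-ones matrix: a brief eigenvector check shows $B \mathbf{1} = 0$ and $B x = (\mu + \lambda) x$ for any eigenvector $x \perp \mathbf{1}$ of $A_G$ with eigenvalue $\mu$, so $B$ is positive semidefinite. Evaluating $\mathbf{1}_A^\top B \mathbf{1}_A \ge 0$ then yields $2 e(G[A]) \ge |A|\bigl((d+\lambda)|A|/|V| - \lambda\bigr)$, and averaging gives $\Delta(G[A]) \ge (d+\lambda)|A|/|V| - \lambda$ whenever $|A| > \alpha|V|$. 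Writing $a_i$ for the value of $|A|$ just before the $i$-th addition to $S$, this produces the linear recursion $a_{i+1} \le \rho\, a_i + (\lambda - 1)$ with $\rho := 1 - (d+\lambda)/|V|$. Solving around its fixed point $c := (\lambda - 1)|V|/(d+\lambda)$ gives $a_i - c \le \rho^{i-1}(|V| - c)$, and the condition $a_s > \alpha|V|$ (needed for the $s$-th $S$-step to occur) forces $\rho^{s-1} > 1/(d+1)$; combined with $\ln(1/\rho) \ge (d+\lambda)/|V|$, this yields $s - 1 < \ln(d+1)|V|/(d+\lambda) = \sigma|V|$, so $|S| \le \lceil \sigma|V|\rceil$ as required.
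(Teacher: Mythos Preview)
Your proposal is correct and follows the same Kleitman--Winston procedure as the paper's proof sketch: initialize $S=\emptyset$, $A=V$, and repeatedly peel off the canonical max-degree vertex of $G[A]$, adding it to $S$ when it lies in $K$. The paper only sketches this and defers the bound $|S|\le\lceil\sigma|V|\rceil$ to~\cite{BPvdP2015}; your Hoffman-type argument (the positive-semidefinite matrix $A_G+\lambda I-\tfrac{d+\lambda}{|V|}J$ yielding $\Delta(G[A])\ge (d+\lambda)|A|/|V|-\lambda$, then the linear recursion on the $a_i$) is exactly the standard way that reference establishes it, so the two proofs coincide.
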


\begin{remark*}
	The linear order on the vertex set~$V$ in the statement of Lemma~\ref{lem:KW1} allows us to select the {\em canonical max-degree vertex} among the vertices in any subset $X\subseteq V$. This is the vertex~$v$ of maximum degree in~$G[X]$; if there are multiple vertices of maximum degree, then we take~$v$ to be the smallest with respect to the order on~$V$.
	
	The proof of Lemma~\ref{lem:KW1} implicitly constructs a function~$f$ to show that~$A$ is uniquely determined by~$S$. The canonical choice of a vertex of maximum degree in~$G[X]$ is essential in this construction. Therefore, the function~$f$ depends on both~$G$ and the linear order on its vertices (these are the context, or free variable, in the terms specified in the remark preceeding the statement of the lemma), but not on~$K$ (the bound variable).
\end{remark*}

The proof of Lemma~\ref{lem:KW1} is the content of~\cite[Section~4]{BPvdP2015}. We will sketch the proof, and refer to the cited paper for details.

\begin{proof}[Proof sketch.]
	We construct~$S$ and~$A$, starting with~$S=\emptyset$ and~$A=V$. While~$|A| > \alpha V$, pick the canonical max-degree vertex~$v$ in~$A$. If~$v \not\in K$, replace~$A \leftarrow A-v$. Otherwise, replace~$S \leftarrow S+v$ and $A \leftarrow A \setminus \left(\{v\} \cup N(v)\right)$. Repeat.
	It was established in \cite{BPvdP2015} that by the time  $|A|\leq \alpha |V|$ is attained, we have $|S|\leq \left\lceil\sigma|V|\right\rceil$.
	
	By construction, it is clear that $S\subseteq K\subseteq S\cup N(S)\cup A$. Since the steps of the procedure can be retraced using $S$ in place of~$K$, the set $A$ is uniquely determined by $S$.
\end{proof}

In~\cite{PvdP2015B}, we showed that we can obtain~$\Delta G[A] < \lambda$ at the expense of making~$S$ slightly larger. The proof in~\cite{PvdP2015B} was given for the Johnson graph, but it holds for general regular graphs as well.

\begin{lemma}\label{lem:KW2} Let $G=(V,E)$ be regular of degree $d\geq 1$, with smallest eigenvalue $-\lambda$, and let~$V$ be linearly ordered.
For each $K\subseteq V$, there exist sets $S, A\subseteq V$ such that 
\begin{enumerate}
\item $|S|\leq \sigma^+ |V|$, $|A|\leq \alpha |V|$, and $\Delta G[A]<\lambda$;
\item $S\subseteq K\subseteq S\cup N(S)\cup A$; and
\item $A$ is uniquely determined by $S$.
\end{enumerate}
Here $\sigma^+:=(\ln(d+1)+1)/(d+\lambda)$ and $\alpha:=\lambda/(d+\lambda)$.
\end{lemma}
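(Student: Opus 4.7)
The plan is to continue the greedy procedure from the proof sketch of Lemma~\ref{lem:KW1} past the point $|A|\leq \alpha|V|$, tightening the stopping criterion to halt only once \emph{both} $|A|\leq \alpha|V|$ and $\Delta G[A]<\lambda$ hold. At each iteration the rule stays the same: pick the canonical max-degree vertex $v$ of $G[A]$; if $v\in K$, add $v$ to $S$ and remove $\{v\}\cup N(v)$ from $A$; otherwise simply delete $v$ from $A$. The invariant $S\subseteq K\subseteq S\cup N(S)\cup A$ is preserved throughout, and the strengthened stopping condition directly yields part~(1) of the conclusion.

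To bound $|S|$, I would split the run into two phases. Phase~1 runs until $|A|$ first drops to at most $\alpha|V|$, and is exactly the procedure of Lemma~\ref{lem:KW1}; by that lemma it contributes at most $\lceil \sigma|V|\rceil$ elements to $S$. In phase~2 we have $|A|\leq \alpha|V|$ but $\Delta G[A]\geq \lambda$, so whenever the pivot $v$ lies in $K$, it has at least $\lambda$ neighbors still in $A$, and each addition to $S$ shrinks $|A|$ by at least $\lambda+1$. Since $|A|\leq \alpha|V|$ at the start of phase~2, this phase contributes at most
$$\frac{\alpha|V|}{\lambda+1} \;=\; \frac{\lambda}{\lambda+1}\cdot\frac{|V|}{d+\lambda} \;\leq\; \frac{|V|}{d+\lambda}$$
further vertices to $S$. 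Summing the two phases gives $|S|\leq \sigma|V|+|V|/(d+\lambda)=\sigma^+|V|$, with the small slack $1/((d+\lambda)(\lambda+1))\cdot|V|$ between $\lambda/(\lambda+1)$ and~$1$ available to absorb the ceiling from phase~1.

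The uniquely-determined property transfers from Lemma~\ref{lem:KW1} with no change. The pivot and the stopping test depend only on the current set $A$ and the fixed pair $(G,<)$, while the branch taken at each iteration (add to $S$ or not) can be recovered from the final $S$: given the current $A$, compute the pivot $v$, and decide whether $v\in K$ by checking whether $v$ appears in $S$. The resulting reconstruction function depends on $G$ and the linear order (the free variable, or context) but not on~$K$ (the bound variable).

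The main obstacle I anticipate is purely bookkeeping: verifying that the slackness in the inequality $\lambda/(\lambda+1)\leq 1$ comfortably absorbs the ceiling from phase~1 for all $|V|$ of interest, and confirming that the degree lower bound of $\lambda$ on the pivot really does hold at every phase-2 step (it does, by the choice of the canonical max-degree vertex together with $\Delta G[A]\geq \lambda$). Neither point is substantive; the conceptual content is simply that a few extra greedy steps beyond Lemma~\ref{lem:KW1} suffice to drop the induced maximum degree below~$\lambda$ at a controlled cost to $|S|$.
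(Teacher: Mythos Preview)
Your proposal is correct and follows essentially the same approach as the paper: run the greedy procedure of Lemma~\ref{lem:KW1} to obtain $|A|\le\alpha|V|$, then continue the identical greedy rule until $\Delta G[A]<\lambda$, bounding the phase-2 additions to $S$ by $\alpha|V|/\lambda$ (the paper uses $\lambda$ rather than your $\lambda+1$, so it does not need the slack argument, but simply drops the ceiling from phase~1 without comment). Your single-procedure presentation with the combined stopping criterion is operationally identical to the paper's two-phase description, and your treatment of the ceiling is in fact more careful than the paper's.
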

\begin{proof}
	Let $S', A' \subseteq V$ be the sets obtained from an application of Lemma~\ref{lem:KW1}. Note that~$|S'| \le \sigma |V|$, and $|A'| \le \alpha |V|$, where~$\sigma = \ln(d+1)/(d+\lambda)$, and $\alpha= \lambda/(d+\lambda)$. We construct~$S$ and~$A$ starting with~$S = S'$ and~$A = A'$.

	While $\Delta G[A] \ge \lambda$, pick the canonical max-degree vertex~$v \in A$. If~$v \not\in K$, replace $A\leftarrow A-v$. Otherwise, replace~$S\leftarrow S+v$ and~$A\leftarrow A\setminus\left(\{v\} \cup N(v)\right)$. Repeat.
	
	Each time a vertex is added to~$S$, at least~$\lambda$ vertices are removed from~$A$, so~$|S\setminus S'| \le |A'|/\lambda \le (\alpha/\lambda)|V|$. It follows that~$|S| \le \sigma|V| + (\alpha/\lambda)|V| = \sigma^+|V|$.
	By construction, $\Delta G[A] < \lambda$, and $S \subseteq K \subseteq S \cup N(S) \cup A$.
	
	Since~$A'$ is uniquely determined by~$S'$, and the procedure described in this proof can be retraced using~$S \setminus S'$ instead of~$K$, it follows that~$A$ is uniquely determined by~$S$.
\end{proof}

The following theorem is an application of Lemma~\ref{lem:KW2}. In it, we drop the superscript~$+$ from~$\sigma^+$.

\begin{theorem}\label{thm:stable}Let $G=(V,E)$ be regular of degree $d\geq 1$, with smallest eigenvalue $-\lambda$. Then 
 $$  i(G)\leq \sum_{i=0}^{\left\lceil \sigma N\right\rceil} \binom{N}{i} 2^{\alpha N}, $$
where $\sigma:=(\ln(d+1) + 1)/(d+\lambda)$, $\alpha:=\lambda/(d+\lambda)$, and $N:=|V|$.
\end{theorem}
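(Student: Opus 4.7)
The plan is to apply Lemma~\ref{lem:KW2} to every stable set~$K$ of~$G$ and to encode~$K$ by a pair~$(S,T)$ whose size can be controlled. Fix any linear order on~$V$ and, for a stable set~$K \subseteq V$, let~$S = S(K)$ and~$A = A(K)$ be the sets produced by Lemma~\ref{lem:KW2}, so that $|S| \leq \lceil\sigma N\rceil$, $|A| \leq \alpha N$, $S \subseteq K \subseteq S \cup N(S) \cup A$, and~$A$ is uniquely determined by~$S$.

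The key observation is that \emph{because~$K$ is stable}, the inclusion $K \subseteq S \cup N(S) \cup A$ can be sharpened. Indeed, every vertex~$v \in N(S)$ has a neighbour in~$S \subseteq K$, so $v \notin K$; hence $K \cap N(S) = \emptyset$ and thus $K \subseteq S \cup A$. Combined with $S \subseteq K$, this gives the decomposition $K = S \cup T$ with $T := K \cap A \subseteq A$. Since~$A$ is determined by~$S$, the map $K \mapsto (S,T)$ is injective on the set of stable sets of~$G$.

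It remains to bound the number of admissible pairs $(S,T)$. The first coordinate~$S$ is any subset of~$V$ of size at most~$\lceil\sigma N\rceil$, of which there are at most
$$\sum_{i=0}^{\lceil \sigma N\rceil}\binom{N}{i}.$$
For each fixed~$S$, the set~$A$ is determined and has $|A| \leq \alpha N$, so the number of possible~$T \subseteq A$ is at most $2^{|A|} \leq 2^{\alpha N}$. Multiplying the two counts yields the stated bound on $i(G)$.

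There is no real obstacle here: the whole content of the proof is the identification $K = S \cup (K\cap A)$, which uses only the stability of~$K$ together with the structure of the output of Lemma~\ref{lem:KW2}. The stronger property $\Delta G[A] < \lambda$ provided by Lemma~\ref{lem:KW2} is not needed for this particular counting bound (the trivial estimate $2^{|A|}$ suffices for the second factor), but invoking Lemma~\ref{lem:KW2} rather than Lemma~\ref{lem:KW1} only changes~$\sigma$ to~$\sigma^+$, which is exactly the constant appearing in the statement.
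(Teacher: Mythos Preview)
Your proof is correct and essentially identical to the paper's own argument: both apply Lemma~\ref{lem:KW2} to a stable set~$K$, use stability to eliminate~$N(S)$ and write $K = S \cup (K \cap A)$, and then count the pairs $(S, K\cap A)$. Your closing remark about Lemma~\ref{lem:KW1} versus Lemma~\ref{lem:KW2} is accurate and a nice clarification, though the paper simply applies Lemma~\ref{lem:KW2} without comment.
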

\proof Fix a linear order on~$V$. Let $K\subseteq V$ be a stable set of $G$. By Lemma \ref{lem:KW2}, there are sets $S, A\subseteq V$ with $|S|\leq \sigma N, |A|\leq \alpha N$, such that $S$ determines $A$ and $S\subseteq K\subseteq S\cup N(S)\cup A$. Since $K$ is a stable set $K\cap N(S)=\emptyset$, and hence $K=S\cup (K\cap A)$. 
There are $\binom{N}{i}$ different sets~$S$ of cardinality~$i$, and for each fixed~$S$, there are at most~$2^{|A|}$ possible~$K\cap A$. The theorem now follows from the bounds on~$|S|$ and~$|A|$.
\endproof

As noted above, we have $s(n,r)=i(J(n,r))$, so Theorem \ref{thm:stable} may be used to derive upper bounds on the number of sparse paving matroids.
Note that the vertices of $J(n,r)$ are linearly ordered by the lexicographic order.
The Johnson graph $J(n,r)$ has degree $d=r(n-r)$ and smallest eigenvalue $-\lambda=-\min\{r, n-r\}$ (see e.g.\ \cite[Theorem 9.1.2]{BrouwerCohenNeumaier1989}). Thus the two constants that arise as Lemma \ref{lem:KW2} is applied to $J(n,r)$ are
$$\sigma_{n,r}:=\frac{\ln(r(n-r)+1)+1}{r(n-r)+\min\{r,n-r\}}\text{ and }\alpha_{n,r}:=\frac{\min\{r,n-r\}}{r(n-r)+\min\{r,n-r\}}.$$

\begin{lemma}\label{lem:sigma}
	For all $0 < r < n$, $\left\lceil\sigma_{n,r}\binom{n}{r}\right\rceil\leq 9\ln(n)\binom{n}{n/2}/n^2$ and $\alpha_{n,r}\binom{n}{r}\leq 2\binom{n}{n/2}/n$.
\end{lemma}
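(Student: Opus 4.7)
The plan is to exploit the $r \leftrightarrow n-r$ symmetry of $\sigma_{n,r}$, $\alpha_{n,r}$, and $\binom{n}{r}$ and then reduce to binomial bookkeeping via an elementary identity. By symmetry we may assume $r \le n/2$, so that $\min\{r,n-r\} = r$ and
$$\sigma_{n,r} = \frac{\ln(r(n-r)+1)+1}{r(n-r+1)}, \qquad \alpha_{n,r} = \frac{1}{n-r+1}.$$

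The key identity is
$$\frac{\binom{n}{r}}{n-r+1} \;=\; \frac{\binom{n}{r-1}}{r},$$
which is immediate from writing both sides as $n!/(r!\,(n-r+1)!)$. Applied once, it gives $\alpha_{n,r}\binom{n}{r} = \binom{n}{r-1}/r =: g(r)$. Since $g(r+1)/g(r) = (n-r+1)/(r+1)$, the function $g$ is unimodal in $r$ with peak near $r = n/2$. Evaluating at the peak and using $\binom{n}{n/2-1} \le \binom{n}{n/2}$ yields $g(r) \le 2\binom{n}{n/2}/n$, which is the bound on $\alpha_{n,r}\binom{n}{r}$.

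Applying the identity twice gives
$$\frac{\binom{n}{r}}{r(n-r+1)} \;=\; \frac{\binom{n}{r-1}}{r^2} \;=:\; h(r).$$
An analogous unimodality calculation — now $h(r+1)/h(r) = r(n-r+1)/(r+1)^2$ crosses~$1$ near $r = (n-1)/2$ — shows that $h(r) \le 4\binom{n}{n/2}/n^2$ on the whole range $1 \le r \le n/2$. For the logarithmic numerator, $r(n-r) \le n^2/4$ gives $\ln(r(n-r)+1)+1 \le \ln(n^2/4+1)+1$, which is at most $2\ln n$ once $n$ is large (since $1 - \ln 2 < 0$ plus vanishing error). Combining, $\sigma_{n,r}\binom{n}{r} \le 8\ln(n)\binom{n}{n/2}/n^2$, and the $+1$ introduced by the ceiling is absorbed into the slack between $8$ and $9$ as soon as $\ln(n)\binom{n}{n/2}/n^2 \ge 1$. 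A finite number of small $n$ is handled by direct computation.

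The main obstacle is purely constant-hunting: both $h(r) \le 4\binom{n}{n/2}/n^2$ and the logarithmic bound are essentially tight at $r = n/2$, so to land under the constant~$9$ one must compute the maximum of $h$ sharply via the unimodality argument rather than through a loose combinatorial overbound such as $\binom{n}{r}/r(n-r+1) \le \binom{n}{n/2}/n$.
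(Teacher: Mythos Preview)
Your approach is essentially the paper's: exploit the $r \leftrightarrow n-r$ symmetry, show that the relevant function of~$r$ is maximized at $r=\lfloor n/2\rfloor$ via a unimodality/monotonicity calculation, and evaluate at the centre. The paper packages the bookkeeping slightly differently---it works with $f(n,r)=\frac{2\ln n}{r(n-r)}\binom{n}{r}$ (denominator $r(n-r)$ rather than your $r(n-r+1)$) and absorbs the ceiling's~$+1$ into~$f$ from the outset instead of at the end---but the substance is the same. Two small slips to fix: your parenthetical ``$1-\ln 2<0$'' should read $1-\ln 4<0$ (since $\ln(n^2/4)=2\ln n-\ln 4$, and $\ln 2<1$), and for odd~$n$ your peak evaluation of~$h$ actually gives $4\binom{n}{n/2}/(n-1)^2$ rather than $4\binom{n}{n/2}/n^2$; both are harmless given the slack between $8$ and the target constant~$9$.
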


\begin{proof}
	Define
	$$
		f(n,r) := \frac{2\ln n}{r(n-r)} \binom{n}{r}.
	$$
	Note that $f(n,n-r) = f(n,r)$, and $\sigma_{n,r}\binom{n}{r} + 1 \le f(n,r)$. A calculation reveals that~$f(n,r-1) \le f(n,r)$ whenever $1 < r \le \lfloor n/2\rfloor$. Hence
	$$
		\left\lceil \sigma_{n,r} \binom{n}{r}\right\rceil \le f(n,r) \le f(n, \lfloor n/2\rfloor) \le \frac{9 \ln(n)}{n^2} \binom{n}{n/2}
	$$
	whenever $0 < r < n$, as required. The bound~$\alpha_{n,r} \le 2/n$ is trivial.
\end{proof}

Using these bounds in Theorem \ref{thm:stable}, and using~$s(n,0) = s(n,n) = 1$, we obtain the following result from \cite{BPvdP2015}.
\begin{theorem}$\log s(n)\leq \frac{2+o(1)}{n}\binom{n}{n/2}$ as $n\rightarrow \infty$.\noproof
\end{theorem}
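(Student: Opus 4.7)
The plan is to apply Theorem \ref{thm:stable} to each Johnson graph $J(n,r)$ separately, then sum over $r$. Since $s(n,r) = i(J(n,r))$ and the relevant spectral data for $J(n,r)$ give rise to the constants $\sigma_{n,r}$ and $\alpha_{n,r}$ already introduced, I would set $N := \binom{n}{r}$ and start from
$$s(n,r) \leq \sum_{i=0}^{\lceil \sigma_{n,r} N\rceil} \binom{N}{i}\, 2^{\alpha_{n,r} N}.$$
Next I would insert the two bounds from Lemma \ref{lem:sigma}, namely $\lceil \sigma_{n,r} N\rceil \leq 9\ln(n)\binom{n}{n/2}/n^2$ and $\alpha_{n,r} N \leq 2\binom{n}{n/2}/n$, and estimate the binomial sum using $\sum_{i\leq k}\binom{N}{i}\leq (eN/k)^k$.

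Taking logarithms then yields, for every $0 < r < n$, a bound of the form
$$\log s(n,r) \leq \frac{9\ln(n)}{n^2}\binom{n}{n/2}\cdot \log\!\bigl(eN/k\bigr) + \frac{2}{n}\binom{n}{n/2}.$$
The first term is the one that requires a quick estimate: with $N \leq \binom{n}{n/2}$ and $k \geq 1$, we get $\log(eN/k) = O(\log n)$, so the whole first term is $O(\log(n)^2/n^2)\cdot \binom{n}{n/2}$, which is $o(1/n)\cdot \binom{n}{n/2}$ and hence absorbed into the $o(1)$ in the desired statement. So for each fixed $r$ with $0 < r < n$, we have $\log s(n,r) \leq \bigl(2+o(1)\bigr)\binom{n}{n/2}/n$.

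To finish, I would sum over $r$: since $s(n,0) = s(n,n) = 1$ and there are only $n-1$ remaining values of $r$,
$$s(n) \leq 2 + (n-1)\cdot 2^{(2+o(1))\binom{n}{n/2}/n}.$$
The factor $n-1$ contributes only an additive $\log(n-1)$ after taking logs, which is negligible compared with $\binom{n}{n/2}/n = \Theta(2^n/n^{3/2})$, so it is folded into the $o(1)$ term. This yields $\log s(n) \leq \bigl(2+o(1)\bigr)\binom{n}{n/2}/n$, as required.

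There is no real obstacle here; the statement is essentially a direct corollary of Theorem \ref{thm:stable} and Lemma \ref{lem:sigma}. The only subtlety is checking that the contribution of the binomial sum $\sum_i\binom{N}{i}$ is indeed lower order than $2^{\alpha_{n,r} N}$, which boils down to verifying that the factor $\log(eN/k)$ is only polylogarithmic in $n$ while $N/k$ is at most polynomial in $n$.
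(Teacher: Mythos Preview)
Your approach is exactly what the paper indicates: apply Theorem~\ref{thm:stable} to each $J(n,r)$, insert the bounds of Lemma~\ref{lem:sigma}, and sum over $r$. The outline is correct and matches the paper's.

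There is one slip in the justification. From ``$N \le \binom{n}{n/2}$ and $k \ge 1$'' you only get $\log(eN/k) \le \log\!\bigl(e\binom{n}{n/2}\bigr) = O(n)$, not $O(\log n)$; and with an $O(n)$ factor the first term becomes $O(\log(n)/n)\binom{n}{n/2}$, which is \emph{not} $o(1/n)\binom{n}{n/2}$ and would swamp the main term. The correct reason is the one you state at the very end but do not actually prove: since $k = \lceil \sigma_{n,r} N\rceil \ge \sigma_{n,r} N$, you have
\[
\frac{N}{k} \le \frac{1}{\sigma_{n,r}} = \frac{r(n-r)+\min\{r,n-r\}}{\ln(r(n-r)+1)+1} < n^2,
\]
so indeed $\log(eN/k) = O(\log n)$. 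With this correction the first term is $O(\log(n)^2/n^2)\binom{n}{n/2} = o(1/n)\binom{n}{n/2}$ and the rest of your argument goes through unchanged.
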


\subsection{Counting matroids}\label{ss:counting-matroids} We review the method for counting matroids from \cite{BPvdP2015}, which was an extension of the Kleitman-Winston method for counting stable sets, and which proceeds by writing a compressed description of matroids. Precisely, for a matroid $M=(E,\BB)\in \MM_{n,r}$, we describe the set of non-bases $K:=\binom{E}{r}\setminus \BB$, viewed as a set of vertices of the Johnson graph $J(n,r)$.

For our matroid description, the set $S$ from Lemma \ref{lem:KW2} is complemented by a so-called {\em partial flat cover} $\ZZ$.
Suppose $M$ is a matroid on ground set $E$, and let $X\subseteq E$. If $F$ is a flat of $M$ so that $|X\cap F|>r(F)$, then $X$ is dependent, as it contains the dependent set $X\cap F$. In that case, we say that the pair $(F, r(F))$ {\em covers} $X$. A single flat $F$ combined with its rank is a relatively compact way to certify the dependency of many sets $X$ in the matroid. We will be interested in describing the dependent $r$-sets in the neighborhood of a dependent $r$-set $X$ of $M$, hence the following definitions. 
\begin{enumerate}
\item If  $r_M(X)<|X|-1$, then we put $\ZZ(M,X):=\{(\cl_M(X), r_M(X))\}$. 
\item If  $r_M(X)=|X|-1$, then we put $\ZZ(M,X):=\{(\cl_M(C), r_M(C)), (\cl_M(X), r_M(X))\}$ where $C$ is the unique circuit contained in $X$.
\end{enumerate}
The following lemma is elementary.
\begin{lemma} Suppose $M$ is a matroid on ground set $E$, of rank $r$. If  $X\in \binom{E}{r}$ is dependent, then each dependent set $Y\in N(X)$ is covered by some $(F,r)\in \ZZ(M,X)$. \noproof
\end{lemma}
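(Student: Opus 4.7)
The plan is to prove the lemma by a direct case analysis on $r_M(X)$, as suggested by the two cases in the definition of $\ZZ(M,X)$. Throughout, write $Y = X - e + f$ for some $e \in X$ and $f \in E \setminus X$.

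In the first case, $r_M(X) < r-1$, the set $\ZZ(M,X)$ consists of the single pair $(\cl_M(X), r_M(X))$, and no assumption on $Y$ being dependent is actually needed. Indeed, $X - e \subseteq X \subseteq \cl_M(X)$, so $|Y \cap \cl_M(X)| \geq |X-e| = r-1 > r_M(X)$, which shows that the pair $(\cl_M(X), r_M(X))$ covers $Y$.

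The second case, $r_M(X) = r-1$, is more delicate, and here I expect the main obstacle: the set $C$ in the definition is the unique circuit in $X$, and whether the neighbour $Y$ is witnessed by $\cl_M(C)$ or by $\cl_M(X)$ depends on the position of $e$ relative to $C$. I would split into two subcases.
\begin{itemize}
\item If $e \notin C$, then $C \subseteq X - e \subseteq Y$, so $|Y \cap \cl_M(C)| \geq |C| = r_M(C)+1 > r_M(C)$, and $(\cl_M(C), r_M(C))$ covers $Y$.
\item If $e \in C$, I would first verify that $X - e$ is independent: any circuit contained in $X - e$ would be a circuit contained in $X$ distinct from $C$, contradicting the uniqueness of $C$. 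Hence $X - e$ is an independent set of size $r-1 = r_M(X)$, and in particular $\cl_M(X-e) \subseteq \cl_M(X)$ with the two closures having the same rank $r-1$. Now, since $Y$ is assumed dependent and $X-e$ is independent, we must have $f \in \cl_M(X-e) \subseteq \cl_M(X)$; combined with $X-e \subseteq \cl_M(X)$, this gives $Y \subseteq \cl_M(X)$, so $|Y \cap \cl_M(X)| = r > r-1 = r_M(X)$, and $(\cl_M(X), r_M(X))$ covers $Y$.
\end{itemize}

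The only place the dependency of $Y$ is actually used is in the last subcase (to force $f \in \cl_M(X)$); everywhere else the covering inequality holds for every neighbour. The argument is elementary once the uniqueness of $C$ is exploited to show that $X-e$ is independent when $e \in C$, which is really the crux of the proof.
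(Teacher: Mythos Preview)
Your proof is correct in all details; the case split on $r_M(X)$ and the further subcase on whether $e\in C$ is exactly the natural argument, and your observation that the dependency of $Y$ is only needed in the final subcase is accurate. The paper itself states this lemma without proof (it is marked as elementary and given a \noproof\ tag), so there is nothing to compare against---your write-up fills in precisely what the paper leaves to the reader.
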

Here $N(X)=\{X-e+f: e\in X, f\in E\setminus X\}$, the neighborhood of $X$ as a vertex of the Johnson graph.
\begin{lemma} \label{lem:BPvdP}
Let $0 < r < n$, and write $G:=J(n,r)$. For all~$M \in \MM_{n,r}$, there is a stable set $S$ of $G$, a collection $\ZZ\subseteq 2^E\times \{0,\ldots, n-1\}$ and a set $A\subseteq V(G)$ such that
\begin{enumerate}
\item $|S|\leq \sigma_{n,r} \binom{n}{r}$, $|\ZZ|\leq 2|S|$, $|A|\leq \alpha_{n,r} \binom{n}{r}$, and $\Delta G[A]<\min\{r,n-r\}$;
\item $S \subseteq V(G) \setminus \BB(M) \le S \cup N(S) \cup A$; and
\item $K\setminus A$ is uniquely determined by the pair $(S,\ZZ)$, and  $A$ is uniquely determined by $S$. 
\end{enumerate}
\end{lemma}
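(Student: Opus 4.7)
The plan is to obtain $S$ and $A$ directly from Lemma \ref{lem:KW2}, and then build $\ZZ$ as the union of the flat-cover pairs $\ZZ(M,X)$ over $X\in S$.

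Apply Lemma \ref{lem:KW2} to $G:=J(n,r)$, ordered lexicographically, with $K:=V(G)\setminus\BB(M)$ the set of non-bases. Since $G$ is $r(n-r)$-regular with smallest eigenvalue $-\min\{r,n-r\}$, the constants $\sigma^+$ and $\alpha$ of Lemma \ref{lem:KW2} specialise to $\sigma_{n,r}$ and $\alpha_{n,r}$, so the resulting sets satisfy $|S|\le \sigma_{n,r}\binom{n}{r}$, $|A|\le \alpha_{n,r}\binom{n}{r}$, $\Delta G[A]<\min\{r,n-r\}$, $S\subseteq K\subseteq S\cup N(S)\cup A$, and $A$ is uniquely determined by $S$. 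I additionally note that $S$ is a stable set of $G$: in the iterative procedure producing $S$, every time $v$ is appended to $S$ the entire neighbourhood $N(v)$ is simultaneously removed from $A$, so no two elements of $S$ can be adjacent. Define $\ZZ:=\bigcup_{X\in S}\ZZ(M,X)$. Since $|\ZZ(M,X)|\le 2$ by construction, we have $|\ZZ|\le 2|S|$, which completes the bounds in (1); and (2) is inherited from Lemma \ref{lem:KW2}.

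It remains to establish the uniqueness claim in (3). Because $A$ is already determined by $S$ alone, and $S\subseteq K$ with $S\cap A=\emptyset$ (added-to-$S$ vertices are removed from $A$ by construction), we may decompose
$$K\setminus A \;=\; S\,\cup\,\bigl((N(S)\setminus A)\cap K\bigr),$$
which reduces the task to identifying $K\cap N(S)$ from $(S,\ZZ)$. Given $Y\in N(S)$, pick any $X\in S$ with $Y\in N(X)$. The elementary lemma stated immediately before Lemma \ref{lem:BPvdP} gives the forward direction: if $Y\in K$ then $Y$ is covered by some $(F,r_F)\in \ZZ(M,X)\subseteq \ZZ$. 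For the converse, every $(F,r_F)\in\ZZ$ equals $(\cl_M(Z),r_M(Z))$ for some $Z$, so $r_M(F)=r_F$; therefore if $(F,r_F)$ covers $Y$, i.e., $|Y\cap F|>r_F$, then $Y\cap F$ is dependent in $M$, and so is $Y$. Hence $Y\in N(S)\cap K$ if and only if some pair in $\ZZ$ covers $Y$, a condition computable from $(S,\ZZ)$ alone.

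The only point above that goes beyond bookkeeping on Lemma \ref{lem:KW2} and the definition of $\ZZ(M,X)$ is this two-sided characterisation of $K\cap N(S)$ via $\ZZ$, and I expect it to be the main (and only real) obstacle; the remaining bounds and containments are immediate.
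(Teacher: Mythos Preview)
Your proof is correct and follows essentially the same route as the paper: apply Lemma~\ref{lem:KW2} to $K$, set $\ZZ:=\bigcup_{X\in S}\ZZ(M,X)$, and recover $K\cap N(S)$ from $\ZZ$. You are somewhat more careful than the paper's terse argument---you explicitly verify that $S$ is stable (a fact not stated in Lemma~\ref{lem:KW2}) and you spell out both directions of the equivalence ``$Y\in N(S)$ is a non-basis iff some pair in $\ZZ$ covers $Y$,'' whereas the paper only states the forward direction and leaves the converse implicit.
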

\proof Apply Lemma \ref{lem:KW2} to the Johnson graph $G:=J(n,r)$ and the subset $K\subseteq V(G)$ to obtain sets $S$, $A$. Put 
$$\ZZ:=\bigcup_{X\in S}\ZZ(M,X).$$
Then $|\ZZ|\leq 2|S|$ as each $\ZZ(M,X)$ has at most two elements. If $X\in S$, then each dependent $Y\in N(X)$ is covered by some element of $\ZZ$, hence $\ZZ$ determines $K\cap N(S)$. As $S\subseteq K\setminus A\subseteq S\cup N(S)$, it follows that $K\setminus A$ is uniquely determined by the pair $(S,\ZZ)$, as required.
\endproof
In what follows, it will be convenient to have a bound on the number of possible pairs $(S,\ZZ)$ which may arise from the application of Lemma \ref{lem:BPvdP}. So let 
$$z(n,r)= \left|\left\{(S,\ZZ): ~S\subseteq \binom{[n]}{r}, ~|S|\leq \left\lceil\sigma_{n,r}\binom{n}{r}\right\rceil, ~\ZZ\subseteq 2^{[n]}\times\{0,\ldots, n-1\},~  |\ZZ|\leq 2|S|\right\}\right|.$$
Put $\zeta(n):=57\frac{\log(n)^2}{n^2}\binom{n}{n/2}$.
\begin{lemma}\label{lem:zbound} $\log z(n,r)\leq  \zeta(n)$ for sufficiently large $n$.\end{lemma}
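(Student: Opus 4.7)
My plan is to apply the standard binomial bound $\sum_{i\leq k}\binom{m}{i}\leq(em/k)^{k}$ directly to the defining set of $z(n,r)$. Set $N:=\binom{n}{r}$, $M:=n\cdot 2^{n}$ for the cardinality of the universe $2^{[n]}\times\{0,\dots,n-1\}$ from which $\ZZ$ is drawn, $B:=\binom{n}{n/2}$, and $s:=\lceil\sigma_{n,r}N\rceil$, so that Lemma~\ref{lem:sigma} furnishes $s\leq s^{*}:=9\ln(n)B/n^{2}$. Separating the choice of $S$ from that of $\ZZ$ and applying the binomial estimate to each factor yields
\[
z(n,r)\leq \sum_{i=0}^{s}\binom{N}{i}\sum_{j=0}^{2s}\binom{M}{j}\leq \left(\frac{eN}{s}\right)^{s}\left(\frac{eM}{2s}\right)^{2s}.
\]
After taking base-$2$ logarithms the task reduces to bounding $f(s)+h(2s)$, where $f(t):=t\log(eN/t)$ and $h(u):=u\log(eM/u)$.

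The next step is to exploit monotonicity: a direct calculation gives $f'(t)=\log(N/t)$ and $h'(u)=\log(M/u)$, so $f$ is increasing on $[0,N]$ and $h$ on $[0,M]$. Since $2s\leq 2s^{*}\ll M$, monotonicity of $h$ unconditionally gives $h(2s)\leq h(2s^{*})$. For $f$, in the regime $s^{*}\leq N$ monotonicity yields $f(s)\leq f(s^{*})\leq s^{*}\log(eB/s^{*})\leq 2s^{*}\log n$ for $n$ sufficiently large (the logarithmic correction $\log(e/(9\ln n))$ being negative). In the opposite regime $s^{*}>N$ (which can happen for $r$ far from $n/2$), the bound $s\leq N$ instead gives $f(s)\leq f(N)=N\log e\leq s^{*}\log e\leq 2s^{*}\log n$ for $n\geq 2$, so the estimate $f(s)\leq 2s^{*}\log n$ holds uniformly in $r$.

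To bound $h(2s^{*})$, substitute $s^{*}=9\ln(n)B/n^{2}$ and $M=n2^{n}$ and invoke the preliminaries' lower bound $B\geq\sqrt{2/\pi}\cdot 2^{n}/\sqrt{n}\cdot(1-1/(8n))$; the $2^{n}$ factors cancel and a short calculation gives $\log(eM/(2s^{*}))\leq 3.5\log n$ for $n$ sufficiently large, whence $h(2s^{*})\leq 7s^{*}\log n$. Summing yields
\[
\log z(n,r)\leq 9 s^{*}\log n = 81\ln(n)\log(n)\cdot B/n^{2} = 81\ln(2)\cdot\log(n)^{2}\cdot B/n^{2},
\]
and since $81\ln 2\approx 56.13<57$, this sits below $\zeta(n)=57\log(n)^{2}B/n^{2}$, as required.

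The main obstacle will be keeping the constants tight enough that the final bound comes in under $57$. The critical estimate $\log(eM/(2s^{*}))\leq 3.5\log n$ is at the heart of the calculation: it relies on the cancellation between $\log M\sim n+\log n$ and $\log(2s^{*})\sim n-2.5\log n$, made possible only because $B$ is very close to $2^{n}/\sqrt{n}$. A naive bound such as $\log(eM/(2s^{*}))\leq\log(eM)\sim n$ would leave a term of order $n\cdot s^{*}\sim\log(n)B/n$, which is vastly larger than $\zeta(n)$.
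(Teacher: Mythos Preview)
Your proof is correct and follows essentially the same approach as the paper: bound $z(n,r)$ by a product of binomial tail sums, apply the standard $(\e m/k)^k$ estimate, substitute the bound $s^*=9\ln(n)\binom{n}{n/2}/n^2$ from Lemma~\ref{lem:sigma}, and compute $\log(eB/s^*)\le 2\log n$ and $\log(eM/(2s^*))\le \tfrac{7}{2}\log n$ to obtain $9s^*\log n = 81\ln(n)\log(n)\binom{n}{n/2}/n^2 \le \zeta(n)$. The only difference is organizational: the paper enlarges the domain of $S$ to $\binom{[n]}{n/2}$ up front and then uses monotonicity of the bound in the single parameter $N$, whereas you keep the domain as $\binom{[n]}{r}$ and handle the possibility $s^*>\binom{n}{r}$ by a separate (easy) case.
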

\proof Counting the number of subsets $S$ of size at most $\left\lceil\sigma_{n,r}\binom{n}{r}\right\rceil$ from a set of size at most $\binom{n}{n/2}$ times the number of subsets $\ZZ$ of size at most $2|S|$ from a set of size $n2^n$ we get
$$z(n,r) \leq \left(\sum_{i=0}^{N}\binom{\binom{n}{n/2}}{i}\right)\left(\sum_{i=0}^{2N}\binom{n2^n}{i}\right)\leq \left(\frac{\e \binom{n}{n/2}}{N}\right)^{N}\left(\frac{\e n2^n}{2N}\right)^{2N}$$
for any $N\geq \left\lceil\sigma_{n,r}\binom{n}{r}\right\rceil$. Clearly the upper bound increases as a function of $N$ while $N\leq \binom{n}{n/2}/2$.
By Lemma~\ref{lem:sigma}, we can take
$$\max_r\left\lceil\sigma_{n,r}\binom{n}{r}\right\rceil\leq  \frac{9\ln(n)}{n^2}\binom{n}{n/2}=:N.$$
It follows that $\e \binom{n}{n/2}/N\leq n^2$ and $\e n2^n/(2N)\leq n^{7/2}$, provided that~$n$ is sufficiently large, and hence
$$\log z(n,r)\leq N\log (n^2) + 2N\log (n^{7/2})\leq 81 \frac{\ln(n)\log(n)}{n^2}\binom{n}{n/2}\leq \zeta(n),$$
 as required.\endproof
We obtain the main result of \cite{BPvdP2015}.
\begin{theorem}$\log m(n)\leq \frac{2+o(1)}{n}\binom{n}{n/2}$ as $n\rightarrow \infty$.
\end{theorem}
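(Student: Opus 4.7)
The plan is to use Lemma \ref{lem:BPvdP} to give each matroid a compact encoding and then count the encodings. Fix $0 < r < n$ and $M \in \MM_{n,r}$, and let $K := \binom{[n]}{r} \setminus \BB(M)$ be the set of non-bases of $M$, viewed as a subset of $V(G)$ for $G = J(n,r)$. Lemma \ref{lem:BPvdP} supplies sets $S$, $\ZZ$, and $A$ satisfying $S \subseteq K \subseteq S \cup N(S) \cup A$, in which $A$ is determined by $S$ and every dependent $r$-set in $N(S)$ is covered by some element of $\ZZ$. Writing $R := K \cap A$, the triple $(S, \ZZ, R)$ then recovers $M$: from $S$ we obtain $A$, from $(S, \ZZ)$ we recover $K \cap N(S)$, and finally $K = S \cup (K \cap N(S)) \cup R$ determines $\BB(M)$.

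Counting the triples is routine. The number of pairs $(S, \ZZ)$ is bounded by $z(n,r) \leq 2^{\zeta(n)}$ via Lemma \ref{lem:zbound}, where $\zeta(n) = 57\log(n)^2 \binom{n}{n/2}/n^2$. For each fixed pair, $A$ is determined, and $R$ ranges freely over subsets of $A$, contributing at most $2^{|A|} \leq 2^{\alpha_{n,r} \binom{n}{r}} \leq 2^{2\binom{n}{n/2}/n}$ choices by Lemma \ref{lem:sigma}. Summing over the at most $n+1$ choices of $r$, I obtain
$$
m(n) \leq (n+1) \cdot 2^{\zeta(n) + 2\binom{n}{n/2}/n}.
$$

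Taking base-$2$ logarithms yields $\log m(n) \leq \log(n+1) + \zeta(n) + 2\binom{n}{n/2}/n$. Since $\binom{n}{n/2} \geq 2^n/\sqrt{n}$ dominates any polynomial in $n$, the term $\log(n+1)$ is $o(\binom{n}{n/2}/n)$, and $\zeta(n)/(\binom{n}{n/2}/n) = O(\log(n)^2/n) \to 0$, so $\zeta(n) = o(\binom{n}{n/2}/n)$ as well. The advertised bound $\log m(n) \leq (2 + o(1))\binom{n}{n/2}/n$ follows. There is no substantive obstacle at this stage: the combinatorial and spectral heavy lifting is already carried out in Lemmas \ref{lem:BPvdP}, \ref{lem:sigma}, and \ref{lem:zbound}, and what remains is to verify that the encoding map $M \mapsto (S, \ZZ, R)$ is injective, which is immediate from the defining property of the partial flat cover $\ZZ$.
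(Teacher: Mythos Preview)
Your proof is correct and follows essentially the same route as the paper: encode each $M\in\MM_{n,r}$ by the triple $(S,\ZZ,K\cap A)$ from Lemma~\ref{lem:BPvdP}, bound the number of pairs $(S,\ZZ)$ by $2^{\zeta(n)}$ via Lemma~\ref{lem:zbound}, bound the choices for $K\cap A$ by $2^{\alpha_{n,r}\binom{n}{r}}\le 2^{2\binom{n}{n/2}/n}$ via Lemma~\ref{lem:sigma}, and absorb the factor of $n+1$ from summing over~$r$. The only cosmetic slip is the estimate $\binom{n}{n/2}\ge 2^n/\sqrt{n}$, which is off by a constant factor; this does not affect the argument, since all you need is that $\binom{n}{n/2}/n$ grows faster than any polynomial.
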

\proof Let~$0 < r < n$ be given.
By Lemma \ref{lem:BPvdP}, for each~$M = (E,\BB) \in \MM_{n,r}$ there is a pair $(S,\ZZ)$ which uniquely determines a set $A\subseteq \binom{E}{r}$ of cardinality at most $\alpha_{n,r}\binom{n}{r}$, and which also determines the set $K\setminus A$, where $K=\binom{E}{r} \setminus \BB$.
Hence $M$ is determined by the pair $(S,\ZZ)$ and the subset $K\cap A$ of a set $A$ that is fixed by $S$.

The number of possible pairs $(S,\ZZ)$ was bounded in Lemma \ref{lem:zbound}, and the number of possible subsets $K\cap A$ of a fixed $A$ is at most $2^{|A|}$. Hence 
$$\log |\MM_{n,r}|\leq \zeta(n)+\alpha_{n,r}\binom{n}{r}\leq O\left(\frac{\log(n)^2}{n^2}\binom{n}{n/2}\right)+ \frac{2}{n}\binom{n}{n/2}\leq \frac{2+o(1)}{n}\binom{n}{n/2}\text{ as } n\rightarrow \infty,$$
and $\log m(n)=\log |\MM_n|\leq \log (n\max_r |\MM_{n,r}|)=\log(n) + \max_r\log |\MM_{n,r}|= \frac{2+o(1)}{n}\binom{n}{n/2}$, as required.\endproof


\section{\label{sec:concise}A concise description of matroids}

\subsection{A new compression algorithm} We refine the compression algorithm from \cite{BPvdP2015} and \cite{PvdP2015B}, to obtain either very compact matroid encodings, or very many encodings of the same matroid. 

We make use of the following elementary matroid lemmas, which we will apply to derive the dependency of certain subsets  in a matroid, based on information about the dependency of other subsets.
\begin{lemma} \label{lem:induce1}Let $M=(E,\BB)\in\MM_{n,r}$ be without loops or coloops, let $K:=\binom{E}{r}\setminus\BB$, and let $X\in \binom{E}{r}$.  If \begin{enumerate}
\item $\{X-e+f: f\in E\setminus X\}\subseteq K$ for some $e\in X$, or 
\item $\{X-e+f: e\in X\}\subseteq K$ for some $f\in E\setminus X$, 
\end{enumerate}
then $X\in K$.\noproof
\end{lemma}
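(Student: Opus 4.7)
The plan is proof by contradiction: assume $X \in \BB$ and deduce that one of the hypothesised $r$-sets is actually a basis of $M$. In both cases the argument is a direct application of the basis-exchange property, with the loop and coloop hypotheses supplying the required exchange partner.

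For case (1), the element $e \in X$ from the hypothesis is not a coloop of $M$, so $M \setminus e$ has rank $r$. The set $X \setminus \{e\}$ is independent in $M \setminus e$ and has size $r-1$, hence extends to a basis of $M \setminus e$ by adding some $f \in E \setminus X$; then $X - e + f$ is a basis of $M$, contradicting the assumption that $X - e + f \in K$.

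For case (2), the element $f \in E \setminus X$ from the hypothesis is not a loop of $M$. Since $X$ is a basis, $X \cup \{f\}$ is dependent of rank $r$ and therefore contains a unique circuit $C$, and $f \in C$. Because $\{f\}$ is not itself a circuit, $C$ meets $X$ in some element $e$, and by the standard basis-circuit exchange $X - e + f$ is a basis of $M$, again a contradiction. Alternatively, case (2) is just case (1) applied to the dual $M^*$, which is itself loopless and coloopless.

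I do not anticipate any substantive technical obstacle: the lemma is essentially a repackaging of basis exchange, and the only care required is to verify that the loop/coloop hypothesis is invoked in the correct case, namely that ``no coloops'' supplies the required $f$ in (1) while ``no loops'' supplies the required $e$ in (2).
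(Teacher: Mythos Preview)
Your proof is correct. The paper itself omits the proof of this lemma (it is stated with a \texttt{\textbackslash noproof} marker and described as elementary), so there is nothing to compare against; your argument via basis exchange, using the absence of coloops in case~(1) and of loops in case~(2), is exactly the intended routine verification.
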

\begin{lemma} \label{lem:induce2}Let $M=(E,\BB)\in\MM_{n,r}$, let $K:=\binom{E}{r}\setminus\BB$, and let $X\in \binom{E}{r}$. Suppose $r_M(X)=r-1$. Then there is a unique circuit $C\subseteq X$ and a unique cocircuit $D\subseteq E\setminus X$, and 
$$K\cap N(X)=\{X-e+f: e\in X\setminus C\text{ or } f\in E\setminus X\setminus D\}.$$ \noproof
\end{lemma}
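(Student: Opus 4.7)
The plan is to first establish the uniqueness of $C$ and $D$, then identify $D$ explicitly with $E\setminus\cl_M(X)$, and finally classify the neighbours $X-e+f$ by a direct case analysis on whether $e\in C$ and $f\in D$.

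For uniqueness of $C$: since $|X|=r$ and $r_M(X)=r-1$, the set $X$ has nullity $1$, so it contains at least one circuit. If $C_1\neq C_2$ were distinct circuits inside $X$, then $C_1\cap C_2$ is a proper subset of the circuit $C_1$ and therefore independent, and submodularity of $r_M$ gives
\[
r_M(C_1\cup C_2)\;\leq\; r_M(C_1)+r_M(C_2)-r_M(C_1\cap C_2)\;=\;|C_1\cup C_2|-2,
\]
so the nullity of $C_1\cup C_2\subseteq X$ would be at least $2$, contradicting the monotonicity of nullity together with the nullity $1$ of $X$. The same argument in the dual, applied to $E\setminus X$ (which has $M^*$-nullity $1$ by the identity $r_{M^*}(Y)=|Y|-r(M)+r_M(E\setminus Y)$), yields uniqueness of $D$. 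To identify $D$, note that $\cl_M(X)$ is a flat of rank $r-1=r(M)-1$, i.e.\ a hyperplane, so $E\setminus\cl_M(X)$ is a cocircuit contained in $E\setminus X$, and by uniqueness equals $D$. In particular, $f\in D$ if and only if $f\notin\cl_M(X)$.

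For the classification, let $X':=X-e+f$ with $e\in X$ and $f\in E\setminus X$. If $e\in X\setminus C$, then $C\subseteq X-e\subseteq X'$, so $X'$ is dependent. If $f\in(E\setminus X)\setminus D$, then $f\in\cl_M(X)$, so $X'\subseteq\cl_M(X)$ and $r_M(X')\leq r-1<|X'|$, again dependent. If instead $e\in C$ and $f\in D$, then uniqueness of $C$ implies that $X-e$ is independent of size $r-1$; moreover $\cl_M(X-e)=\cl_M(X)$, since $\cl_M(X-e)\subseteq\cl_M(X)$ is immediate and the reverse follows from $e\in\cl_M(C-e)\subseteq\cl_M(X-e)$. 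Since $f\in D$ means $f\notin\cl_M(X)=\cl_M(X-e)$, the set $X'=(X-e)+f$ is independent. Combining the three cases yields the stated description of $K\cap N(X)$.

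The only step that requires more than bookkeeping is the submodular argument for uniqueness of $C$ and $D$; once $D$ is identified with $E\setminus\cl_M(X)$, the rest reduces to a routine comparison of ranks of $X$, $X-e$, and $X'$.
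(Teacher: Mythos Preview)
Your proof is correct. The paper itself states this lemma without proof (it is marked as an elementary matroid fact), so there is no approach to compare against; your argument via the nullity-$1$ characterisation, the identification $D=E\setminus\cl_M(X)$, and the three-case analysis on $(e,f)$ is the standard route and is complete as written.
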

We will now prove the main technical lemma of this paper, which is an extension of Lemma \ref{lem:BPvdP}. The proof uses ideas developed in \cite{PvdP2015B}, in particular the use of the above lemmas to sparsify the encoding. 

\begin{lemma} \label{thm:encoding}Let~$0 < r < n$.
For all $M=(E,\BB)\in\MM_{n,r}$, there exist sets $S, W$, a set $\ZZ\subseteq 2^E \times \{0, 1, \ldots, n-1\}$, a number $t\in\N$,
and a collection~$\mathcal{T} \subseteq 2^{\binom{E}{r}}$ of cardinality at least~$2^t$, such that
\begin{enumerate}
\item $|S|\leq \left\lceil\sigma_{n,r}\binom{n}{r}\right\rceil$, and $|\ZZ|\leq 2|S|$,
\end{enumerate}
while for all~$T \in \mathcal{T}$
\begin{enumerate}
\setcounter{enumi}{1}
\item $|T|=t$, $T \cap W = \emptyset$, and $T\cup W$ is a stable set of $J(n,r)$;
\item $M$ is uniquely determined by $(S, \ZZ, T\cup W)$; and
\item $U(M)$ is uniquely determined by $(S, \ZZ, T)$.
\end{enumerate}
\end{lemma}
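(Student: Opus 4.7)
The plan is to refine the compression scheme of Lemmas \ref{lem:BPvdP} and \ref{lem:KW2} by isolating, within the residual set $A$, a flexible sub-region where circuit-hyperplanes of $M$ can be shuffled without disturbing $U(M)$ or the rest of $W(M)$. The parameter $t$ and family $\mathcal{T}$ then register either the size of this flexibility (yielding many encodings of $M$) or the compactness of the residual description of $U(M)$ (so that $U(M)$ is almost entirely determined by $(S,\ZZ)$). I would first apply Lemma \ref{lem:KW2} to $G := J(n,r)$ with the non-bases $K := \binom{E}{r}\setminus\BB(M)$, producing $S$ and $A$ with $|S|\leq \lceil\sigma_{n,r}\binom{n}{r}\rceil$, $|A|\leq \alpha_{n,r}\binom{n}{r}$, and $\Delta G[A]<\lambda$. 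Setting $\ZZ := \bigcup_{X\in S}\ZZ(M,X)$ gives $|\ZZ|\leq 2|S|$, and by Lemma \ref{lem:BPvdP} the pair $(S,\ZZ)$ determines both $A$ and $K\setminus A$.

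Next, I would define the \emph{free region} $A^\circ \subseteq A$ to consist of those vertices at distance at least $2$ in $G$ from $U(M)$ and from $W(M)\setminus A$---equivalently, the subset of $A$ in which one may replace $W(M)\cap A^\circ$ by an arbitrary stable set and still obtain a valid circuit-hyperplane set of a matroid sharing its $U$ with $M$. Because $\Delta G[A^\circ]\leq \Delta G[A]<\lambda$, the graph $G[A^\circ]$ is sparse enough to host many stable sets of moderate size. I would then let $\mathcal{T}$ be a family of stable sets $T\subseteq A^\circ$ of a common size $t$, and set $W := W(M)\setminus A^\circ$. For each such $T$, $T\cup W$ is stable in $J(n,r)$, and using Lemmas \ref{lem:induce1} and \ref{lem:induce2} together with the flat cover $\ZZ$, the non-bases of the corresponding matroid can be reconstructed from $K\setminus A$, from $T\cup W$, and from the forced dependencies inside $A\setminus A^\circ$; this verifies item (3). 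Item (4) is obtained by the same reconstruction restricted to the big components: $T\subseteq A^\circ$ pinpoints the inactive part of $A$, and $A\setminus(A^\circ \cup T\cup N(T))$ combined with $K\setminus A$ and $\ZZ$ identifies $U(M)$.

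The pair $(t,\mathcal{T})$ is then chosen via a dichotomy on the size of $A^\circ$. In the generous case, $|A^\circ|$ is large enough that the sparsity $\Delta G[A^\circ]<\lambda$ guarantees $\log|\{\text{stable sets of size }t\text{ in }A^\circ\}|\geq t$ for a suitable $t$, so $|\mathcal{T}|\geq 2^t$ holds by volume counting. In the restricted case $A^\circ$ is small, so I take $t = 0$ and $\mathcal{T} = \{\emptyset\}$, in which case $U(M)\cap A$ is already essentially determined by $(S,\ZZ)$ since only a bounded number of ``free'' positions remain in $A$. The main technical obstacle I foresee is arranging the decoder so that a single function recovers the correct matroid from every $(S,\ZZ,T\cup W)$ with $T\in\mathcal{T}$: one must show that the chosen $A^\circ$ genuinely supports free replacement of stable sets (no hidden matroid axiom is violated), and that $\ZZ$---possibly augmented with a few extra flat--rank pairs to account for dense spots of $G[A]$---rules out new forced dependencies in $A\setminus A^\circ$ when $T$ is varied, all while preserving the bound $|\ZZ|\leq 2|S|$.
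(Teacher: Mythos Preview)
Your plan misreads what the lemma demands. Item (3) says that a \emph{single} decoder must return the \emph{same} matroid $M$ from every triple $(S,\ZZ,T\cup W)$ with $T\in\mathcal T$; the family $\mathcal T$ is a collection of redundant encodings of $M$, not a collection of nearby matroids. Your ``free region'' $A^\circ$ is designed so that replacing $W(M)\cap A^\circ$ by another stable set yields \emph{another} matroid with the same $U$; that establishes nothing about recovering $M$ itself from $(S,\ZZ,T\cup W)$ when $T\neq W(M)\cap A^\circ$. Relatedly, $A^\circ$ is defined in terms of $U(M)$ and $W(M)$, so no decoder that sees only $(S,\ZZ,T)$ can even locate $A^\circ$, which undermines your argument for item (4). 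Finally, the dichotomy with $t=0$ does not help: in that branch you would need $U(M)$ to be \emph{exactly} determined by $(S,\ZZ)$, not ``essentially'' determined up to a bounded number of free positions.

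The missing idea is a propagation operator computable from $(S,\ZZ)$ alone. After Lemma~\ref{lem:BPvdP}, first close $K\setminus A$ under Lemma~\ref{lem:induce1} inside $A$ to obtain a forced set $P\subseteq K\cap A$ and put $A':=A\setminus P$; all of this is determined by $(S,\ZZ)$. Because $\Delta G[A']<\min\{r,n-r\}$, every $X\in A'$ has some $e^*\in X$ with $X_{e^*}\cap A'=\emptyset$ and some $f^*\notin X$ with $X^{f^*}\cap A'=\emptyset$; hence the sets $C(X)$ and $D(X)$ of Lemma~\ref{lem:induce2}, read off along those directions, are functions of $(S,\ZZ)$. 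Iterating Lemma~\ref{lem:induce2} defines, for each $X\in A'$, a set $Q(X)\subseteq A'$ determined by $(S,\ZZ)$, and one checks that each connected component of $G[K\cap A']$ equals $Q(X)$ for any $X$ in it. Now let $W$ be the set of singleton components (one shows $W\subseteq W(M)$), let $C_1,\ldots,C_t$ be the non-singleton components, and let $\mathcal T$ be the set of transversals $\{T:|T\cap C_i|=1\text{ for all }i\}$, so $|\mathcal T|\ge 2^t$. For every $T\in\mathcal T$ one has $K=(K\setminus A')\cup\bigcup_{X\in T\cup W}Q(X)$, giving item~(3), and $U(M)$ is recovered from $(S,\ZZ,T)$ by forming $(K\setminus A')\cup\bigcup_{X\in T}Q(X)$ and deleting isolated vertices, giving item~(4). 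No dichotomy on the size of any region is needed; $t$ is simply the number of big components.
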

The proof of Lemma~\ref{thm:encoding} introduces a large number of different sets. Figure~\ref{fig:enc} may be useful in keeping track of the relations between these sets.
\proof We apply Lemma \ref{lem:BPvdP} to obtain a stable set $S$ in the Johnson graph $G=J(n,r)$, a set $A\subseteq \binom{E}{r}$ and a collection $\ZZ\subseteq 2^E\times [n]$, so that $A$ and $K\setminus A$ are uniquely determined by $(S,\ZZ)$. From the lemma, we have:
\begin{claim} \label{clA}$|S|\leq \left\lceil\sigma_{n,r}\binom{n}{r}\right\rceil$ and  $|\ZZ|\leq 2|S|$.\noproof
\end{claim}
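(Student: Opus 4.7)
The plan is to observe that this claim is an immediate consequence of the invocation of Lemma~\ref{lem:BPvdP} in the sentence preceding the claim. That lemma, applied with the Johnson graph $G = J(n,r)$ and with $K := \binom{E}{r}\setminus\BB$, produces a stable set $S\subseteq V(G)$ together with a collection $\ZZ \subseteq 2^E \times \{0,\ldots,n-1\}$ satisfying the two inequalities $|S|\le \sigma_{n,r}\binom{n}{r}$ and $|\ZZ|\le 2|S|$ directly. The second bound is exactly the one we claim, while the first upgrades from the real inequality $|S|\le \sigma_{n,r}\binom{n}{r}$ to the integer inequality $|S|\le \lceil\sigma_{n,r}\binom{n}{r}\rceil$ simply because $|S|$ is a nonnegative integer.

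If one wishes to expose the provenance of the two bounds: the bound on $|S|$ traces back through Lemma~\ref{lem:BPvdP} to Lemma~\ref{lem:KW2}, whose iterative procedure terminates as soon as $|A|\le \alpha_{n,r}\binom{n}{r}$ is attained, and meanwhile places at most $\lceil\sigma_{n,r}\binom{n}{r}\rceil$ vertices in $S$. The bound $|\ZZ|\le 2|S|$ arises in the proof of Lemma~\ref{lem:BPvdP} from the explicit construction $\ZZ = \bigcup_{X\in S} \ZZ(M,X)$, combined with the fact that each $\ZZ(M,X)$ has at most two elements by definition. Since the opening step of the present proof uses precisely this construction, both bounds carry over verbatim. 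There is essentially no obstacle: the claim functions as a bookkeeping step at the start of the proof, recording the input size guarantees for later use, and the subsequent stages of the proof of Lemma~\ref{thm:encoding} will not modify $S$ or $\ZZ$.
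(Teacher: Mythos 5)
Your proposal is correct and matches the paper exactly: the paper marks this claim as requiring no proof, since it is precisely the content of items (1) of Lemma~\ref{lem:BPvdP} as invoked in the immediately preceding sentence (and the ceiling version of the bound on $|S|$ is trivially implied by the stated bound $|S|\le\sigma_{n,r}\binom{n}{r}$). Your tracing of the bounds back to Lemma~\ref{lem:KW2} and to the construction $\ZZ=\bigcup_{X\in S}\ZZ(M,X)$ is accurate but not needed.
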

We proceed to encode $K\cap A$,  exploiting the additional information from Lemma \ref{lem:BPvdP} that $\Delta(G[A])<\min\{r,n-r\}$, and our assumption that $M$ does not contain loops or coloops.

For a set $X\in\binom{E}{r}$, we abreviate
$$X_e:=\{X-e+y: y\in E\setminus X\}\text{ and }X^f:=\{X-x+f: x\in X\}.$$
Using Lemma \ref{lem:induce1} we can use our knowledge of $K\setminus A$ to derive that further elements of $A$ are necessarily contained in $K$. So let $P$ be the smallest subset of elements from $A$ whose dependency in the matroid is implied by a repeated application of that lemma. That is, let $P\subseteq A$ be the smallest set such that 
\begin{enumerate}
\item if $X\in A$ and there is an $e\in X$ such that $X_e\subseteq P\cup (K\setminus A)$, then   $X\in P$; and
\item if $X\in A$ and there is an $f\in E\setminus X$ such that $X^f\subseteq P\cup (K\setminus A)$, then  $X\in P$.
\end{enumerate}
The following is clear.
\begin{claim} The pair $(S,\ZZ)$ uniquely determines $P$, and $P\subseteq K\cap A$.\noproof\end{claim}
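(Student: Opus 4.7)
The plan is to continue the partial proof in the excerpt by splitting the encoding of $K := \binom{E}{r}\setminus \BB(M)$ into three registers: the seed $(S,\ZZ)$ from Lemma \ref{lem:BPvdP}, a register $T$ whose different choices correspond to different representatives of the non-singleton components of $G[K]$, and a register $W$ recording the remaining circuit-hyperplane non-bases. The multiplicity $|\mathcal{T}|\geq 2^t$ will arise from the freedom of choosing which vertex of each big component plays the role of representative.

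First I would strengthen the propagation beyond what defines $P$ in the excerpt. The flat cover $\ZZ(M,X)$ for each $X\in S$ encodes $\cl_M(X)$ and, when $r_M(X) = r-1$, also the closure of the unique circuit inside $X$. Using Lemma \ref{lem:induce2}, this information pins down $K\cap N(X)$ for each $X\in S$. Combined with the propagation defining $P$, the pair $(S,\ZZ)$ determines $K\setminus A$ and $P\subseteq K \cap A$, so the remaining unknown is $Q := (K \cap A)\setminus P$.

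Next I would partition $Q$ along the component structure of $G[K]$. Singleton components (circuit-hyperplanes) meeting $Q$ contribute their unique vertex to $W$; the non-singleton components $C_1,\ldots,C_k$ of $G[K]$ meeting $Q$ lie in $U(M)$ and are handled through $T$. For each $i$, choose a representative $X_i \in C_i \cap (A\setminus P)$, let $T:=\{X_1,\ldots,X_k\}$, and set $t:=k$. The family $\mathcal{T}$ consists of all such selectors, so $|\mathcal{T}| = \prod_i |C_i \cap (A\setminus P)|$. To achieve $|\mathcal{T}| \geq 2^t$ it suffices to verify that every surviving $C_i$ has at least two vertices in $A\setminus P$; this should follow from a minimality argument on $P$ (if only one vertex $X$ of $C_i$ survived, the remaining vertices of $C_i$ would already lie in $P\cup(K\setminus A)$, and a further application of Lemma \ref{lem:induce1} would then capture $X$), possibly after strengthening the propagation rules.

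It remains to verify the three decoding properties. Stability of $T \cup W$ in $J(n,r)$ holds because circuit-hyperplanes form a stable set (making $W$ stable), representatives of distinct components are non-adjacent in $G$ (otherwise they would lie in a single component), and the circuit-hyperplanes in $W$ are isolated in $G[K]$ (so non-adjacent to $T$). To see that $(S,\ZZ,T)$ determines $U(M)$, I would iteratively unfold each component $C_i$ from its representative $X_i$: the closure data in $\ZZ$, Lemma \ref{lem:induce2}, and the sparsity $\Delta G[A] < \min\{r,n-r\}$ should determine, step by step, which neighbours of the currently known portion of $C_i$ lie in $K$. With $U(M)$ in hand, the matroid $M$ is recovered from $K = U(M) \cup W(M)$, where $W(M)$ is read directly from $W$. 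The main obstacle I anticipate is exactly this unfolding step: showing rigorously that a single representative inside $A \setminus P$, together with the bounded $\ZZ$-data, really does pin down the entire component $C_i$. The sparsity of $G[A]$ and the minimality of $P$ should combine to make each step of the unfolding uniquely determined, but the bookkeeping — in particular coordinating the closure information needed for the unfolding with the tight bound $|\ZZ|\leq 2|S|$ — is delicate and will be the most intricate part of the proof.
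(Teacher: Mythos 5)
The one part of your proposal that actually addresses this claim (the end of your second paragraph) is correct and matches the paper, which states the claim without proof: $(S,\ZZ)$ determines $A$ and $K\setminus A$ by Lemma~\ref{lem:BPvdP}, the set $P$ is the closure of a deterministic iteration applied to that data (so it is a function of $(S,\ZZ)$), and an element is added to $P$ only when the hypotheses of Lemma~\ref{lem:induce1} are met, so by induction every element of $P$ lies in $K$, giving $P\subseteq K\cap A$. Note, however, that the bulk of your proposal is aimed at the remaining claims of Lemma~\ref{thm:encoding} (the construction of $T$, $W$, $\mathcal{T}$ and the decoding) rather than at this particular claim, and your side remark that Lemma~\ref{lem:induce2} pins down $K\cap N(X)$ from $\ZZ(M,X)$ is not needed here --- that determination is already part of the conclusion of Lemma~\ref{lem:BPvdP}.
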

We set $A':=A\setminus P$, then $K=(K\setminus A)\cup P\cup (K\cap A')$. We have reduced our task to encoding  $K\cap A'$.
\begin{claim} Let $X\in A'$. Then there is an $e\in X$ such that $X_e\cap A'=\emptyset$ and an $f\in E\setminus X$ such that  $X^f\cap A'=\emptyset$.\end{claim}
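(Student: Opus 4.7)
The plan is to apply the pigeonhole principle to the two partition structures of the Johnson neighborhood $N(X)$, leveraging the degree bound provided by Lemma~\ref{lem:BPvdP}.

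First I would recall that for $X \in \binom{E}{r}$ the neighborhood $N(X)$ in $G = J(n,r)$ has exactly $r(n-r)$ elements and admits two natural decompositions: $N(X) = \bigsqcup_{e \in X} X_e$ into $r$ disjoint parts of size $n-r$, and $N(X) = \bigsqcup_{f \in E \setminus X} X^f$ into $n-r$ disjoint parts of size $r$. Since $X \in A' \subseteq A$, and since Lemma~\ref{lem:BPvdP} guarantees $\Delta(G[A]) < \min\{r, n-r\}$, I obtain
$$|N(X) \cap A'| \leq |N(X) \cap A| = \deg_{G[A]}(X) < \min\{r, n-r\}.$$

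Next I would argue by contradiction. If each of the $r$ sets $X_e$ met $A'$, then since the $X_e$ are pairwise disjoint, $|N(X) \cap A'|$ would be at least $r$, contradicting the displayed inequality (as $\min\{r,n-r\} \leq r$). Hence some $X_e$ is disjoint from $A'$. The identical pigeonhole argument applied to the $n-r$ disjoint sets $X^f$ produces an $f \in E \setminus X$ with $X^f \cap A' = \emptyset$.

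I do not expect any real obstacle: the claim is a direct pigeonhole consequence of the fact that $N(X)$ has too few elements in $A$ to meet every part of either decomposition. The substantive work was done earlier, in refining Lemma~\ref{lem:KW1} into Lemma~\ref{lem:KW2} to secure the bound $\Delta(G[A]) < \min\{r, n-r\}$; without that degree bound, $N(X) \cap A$ could potentially cover every $X_e$ (or every $X^f$), and the claim would fail. Note that the argument only uses $X \in A$ rather than the stronger $X \in A'$; the role of the set $P$ will presumably come into play later, when the chosen $e$ and $f$ are used to compactly encode $K \cap A'$.
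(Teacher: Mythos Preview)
Your proof is correct and follows essentially the same route as the paper: both use the degree bound $\Delta G[A] < \min\{r,n-r\}$ together with the pigeonhole decomposition of $N(X)$ into the $r$ disjoint rows $X_e$ (respectively the $n-r$ disjoint columns $X^f$) to conclude that not every part can meet $A'$. Your observation that only $X \in A$ is actually needed is accurate and matches the paper's implicit reasoning.
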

\proof As $A'\subseteq A$, we have $\Delta G[A']\leq \Delta G[A]< \min\{r,n-r\}$. So $X$ has at most $\min\{r, n-r\}-1$ neighbors in $A'$. Hence $X$ cannot have a neighbor in $X_e\cap A'$ for each $e\in X$, and $X$ cannot have a neighbor in $X^f\cap A'$ for each $f\in E\setminus X$. \endproof
For each $X\in A'$, let $f^* \equiv f^*(X)$ be the minimal $f\in E\setminus X$ such that $X^f\cap A'=\emptyset$, and put 
$$C(X):=\{x\in X: X-x+f^*\in X^{f^*}\setminus (K\setminus A')\}.$$ 
Similarly, let $e^* \equiv e^*(X)$ be the minimal $e\in X$ such that $X_e\cap A'=\emptyset$ and put
$$D(X):=\{y\in E\setminus X: X-e^*+y\in X_{e^*}\setminus (K\setminus A')\}.$$
\begin{claim} The pair $(S,\ZZ)$ uniquely determines $C(X)$ and $D(X)$ for each $X\in A'$.\end{claim}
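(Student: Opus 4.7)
The plan is to show that everything on the right-hand sides of the definitions of $C(X)$ and $D(X)$ can be reconstructed from $(S,\ZZ)$ alone, by peeling back the definitions in the order they were introduced.

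First, I would observe that $(S,\ZZ)$ already determines a great deal by the previous claims in the proof: Lemma~\ref{lem:BPvdP} gives that $A$ and $K\setminus A$ are determined by $(S,\ZZ)$, and the preceding claim says $P$ is determined as well. Consequently, $A'=A\setminus P$ is determined by $(S,\ZZ)$. Moreover, since $P\subseteq K\cap A$, the decomposition $K=(K\setminus A)\sqcup P\sqcup (K\cap A')$ gives
\[
K\setminus A' = (K\setminus A)\cup P,
\]
and the right-hand side is determined by $(S,\ZZ)$.

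Next, fix $X\in A'$ (and note that whether a given set $X$ lies in $A'$ is itself determined by $(S,\ZZ)$). Since $A'$ is determined, for each $f\in E\setminus X$ one can test whether $X^f\cap A'=\emptyset$; taking the minimal such $f$ yields $f^*(X)$ purely from $(S,\ZZ)$. Having $f^*(X)$ in hand, the set $X^{f^*}=\{X-x+f^*:x\in X\}$ is an explicit subset of $\binom{E}{r}$ disjoint from $A'$, and so $X^{f^*}\cap (K\setminus A')$ can be computed from the already-determined set $K\setminus A'$. The definition
\[
C(X)=\{x\in X: X-x+f^*\in X^{f^*}\setminus(K\setminus A')\}
\]
then recovers $C(X)$ from $(S,\ZZ)$ alone. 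The argument for $e^*(X)$ and $D(X)$ is symmetric, replacing $X^f$ by $X_e$ and $x$ by $y\in E\setminus X$.

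I do not expect any real obstacle here: the claim is essentially a bookkeeping statement about what depends on what, and the content is that once $A$, $K\setminus A$, and $P$ are pinned down by $(S,\ZZ)$, the definitions of $f^*,e^*,C(X),D(X)$ involve no further data from $M$. The one point worth being careful about is confirming that $K\setminus A'$ (rather than $K$ itself) is enough to evaluate the membership test in the definitions of $C(X)$ and $D(X)$, which works precisely because $X^{f^*}\cap A'=\emptyset$ and $X_{e^*}\cap A'=\emptyset$ by the choice of $f^*$ and $e^*$.
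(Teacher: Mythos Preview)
Your proof is correct and follows essentially the same route as the paper, which simply states that $(S,\ZZ)$ determines $A'$ and $K\setminus A'$, and these in turn determine $C(X)$ and $D(X)$; you have just unpacked this in more detail. One small remark: your final caveat is unnecessary, since the definitions of $C(X)$ and $D(X)$ are literally written in terms of $K\setminus A'$, so no appeal to $X^{f^*}\cap A'=\emptyset$ is needed to compute them from $(S,\ZZ)$.
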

\proof $(S,\ZZ)$ determines $A'$ and $K\setminus A'$, and these in turn determine $C(X)$ and $D(X)$.\endproof
\begin{claim} Let $X, Y\in A'$, such that $Y=X-e+f$. If $X\in K$, and  $e\not\in C(X)$ or $f\not\in D(X)$, then $Y\in K$.
\end{claim}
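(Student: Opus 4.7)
The plan is to show that, under the hypotheses, $X$ has rank exactly $r-1$ in $M$, and moreover that $C(X)$ and $D(X)$ coincide with the unique circuit $C \subseteq X$ and the unique cocircuit $D \subseteq E \setminus X$ produced by Lemma~\ref{lem:induce2}. Once this identification is in place, the conclusion is immediate from that lemma: $e \notin C(X)=C$ or $f \notin D(X)=D$ means $e \in X \setminus C$ or $f \in E \setminus X \setminus D$, and Lemma~\ref{lem:induce2} directly places $Y = X-e+f$ in $K \cap N(X)$.

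The heart of the argument is extracting the structural consequence of $X \in A' = A \setminus P$. Since $P$ is, by definition, the smallest subset of $A$ closed under the two rules supplied by Lemma~\ref{lem:induce1}, and $K \setminus A' = P \cup (K \setminus A)$, the fact that $X \notin P$ means that $X_{e'} \not\subseteq K \setminus A'$ for every $e' \in X$ and $X^{f'} \not\subseteq K \setminus A'$ for every $f' \in E \setminus X$. Applied to $f'=f^*(X)$, combined with the defining property $X^{f^*} \cap A' = \emptyset$, this forces some $X-x+f^* \in \BB$. Because $X$ is dependent but has a basis as a neighbor in $J(n,r)$, one concludes $r_M(X-x) = r-1$ and hence $r_M(X) = r-1$. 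Lemma~\ref{lem:induce2} now yields the unique circuit $C \subseteq X$ and cocircuit $D \subseteq E \setminus X$, and the existence of a basis in $X^{f^*}$ gives $f^* \in D$. Since $X^{f^*} \cap A' = \emptyset$, the definition of $C(X)$ reduces to $C(X) = \{x \in X : X-x+f^* \in \BB\}$, which by Lemma~\ref{lem:induce2} equals $\{x \in X : x \in C\} = C$. A symmetric argument applied to $X_{e^*}$ shows $e^* \in C$ and $D(X) = D$.

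The remaining step is purely formal: given $Y = X - e + f \in A'$ with $e \notin C(X)$ or $f \notin D(X)$, substituting $C(X) = C$ and $D(X) = D$ and invoking Lemma~\ref{lem:induce2} yields $Y \in K \cap N(X) \subseteq K$. The one point that requires care is the identification of $C(X)$ and $D(X)$ with the matroidal circuit and cocircuit of $X$; this is precisely what the minimality built into the definition of $P$ is designed to enforce, and it is what gives the data $C(X), D(X)$ a matroidal meaning rather than leaving them as opaque combinatorial artifacts of the construction.
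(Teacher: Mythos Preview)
Your proof is correct and follows essentially the same route as the paper: you use $X \in A' = A \setminus P$ together with $X^{f^*} \cap A' = \emptyset$ to produce a basis in $X^{f^*}$, conclude $r_M(X) = r-1$, apply Lemma~\ref{lem:induce2}, and then identify $C(X)$ and $D(X)$ with the unique circuit and cocircuit so that the conclusion follows directly. If anything, your write-up is slightly more explicit than the paper's in isolating the intermediate fact $f^* \in D$ (and dually $e^* \in C$) that makes the identification $C(X)=C$, $D(X)=D$ go through.
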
 
\proof Suppose $X\in K\cap A'$. By definition, $C(X)=\{x\in X: X-x+f^*\in X^{f^*}\setminus (K\setminus A')\}$, where $X^{f^*}\cap A'=\emptyset$, so that in fact 
$$C(X)=\{x\in X: X-x+f^*\in X^{f^*}\setminus K\}.$$
If $C(X)=\emptyset$, then $X^{f^*}\subseteq K$, which would contradict that $X\not\in P$. Hence there exists a $g\in C(X)$, such that $X-e+g\not\in K$ and hence $r_M(X)\geq r_M(X-e+g)-1=r-1$. We conclude tht $r_M(X)=r-1$, and Lemma \ref{lem:induce2} applies to $X$. Since we have shown $C(X)\neq\emptyset$ and by a dual argument $D(X)\neq \emptyset$, it follows that the unique circuit contained in $X$ is $C(X)$ and the unique cocircuit disjoint from $X$ is $D(X)$, and hence $$K\cap N(X)=\{X-x+y: x\in X\setminus C(X)\text{ or } y\in (E\setminus X)\setminus D(X)\}.$$ 
The claim follows.\endproof
\begin{figure}
\includegraphics[width=.5\textwidth]{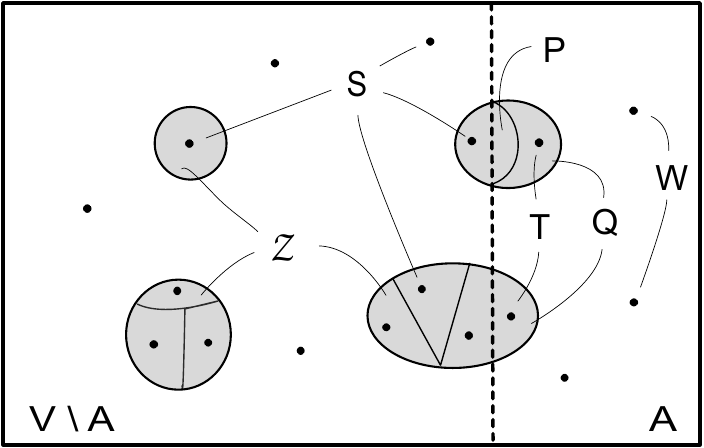}
\caption{\label{fig:enc} Encoding the non-bases $K$ of a matroid. }
\end{figure}
For each $X\in A'$, let $Q(X)$ be the set of vertices from $A'$ whose dependency would follow from $X\in K$ by an iterated application of the last claim. That is, let $Q(X)$ be the smallest subset $Q$ of $A'$ so that 
\begin{enumerate}
\item $X\in Q$ and 
\item if $X'\in Q$, $Y\in A'$ are such that $Y=X'-e+f$, and  $e\not\in C(X')$ or $f\not\in D(X')$, then $Y\in Q$.
\end{enumerate}
\begin{claim} For each $X\in A'$, the pair $(S,\ZZ)$ uniquely determines $Q(X)$.\end{claim}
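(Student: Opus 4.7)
The plan is to recognise this claim as an immediate consequence of the information already extracted in the preceding claims: the recursive definition of $Q(X)$ refers only to $A'$, $C(\cdot)$ and $D(\cdot)$, and each of these has been shown to be determined by $(S,\ZZ)$.

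First, I would recollect the determinations established so far. By Lemma \ref{lem:BPvdP}, the pair $(S,\ZZ)$ determines both $A$ and $K\setminus A$. The earlier claim in this proof shows that the closure $P$ of $K\setminus A$ inside $A$ under Lemma \ref{lem:induce1} is determined by $(S,\ZZ)$, so $A'=A\setminus P$ is too. The immediately preceding claim then states that for every $X'\in A'$ the sets $C(X')$ and $D(X')$ are determined by $(S,\ZZ)$.

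Next, I would exhibit $Q(X)$ as the output of a deterministic iterative procedure whose only inputs are $X$ and these determined objects. Set $Q_0:=\{X\}$, and for $i\geq 0$ let
$$Q_{i+1}:=Q_i\cup\{Y\in A'\setminus Q_i : \exists\, X'\in Q_i,\ e\in X',\ f\in E\setminus X'\ \text{with}\ Y=X'-e+f\ \text{and}\ (e\notin C(X')\ \text{or}\ f\notin D(X'))\}.$$
The sequence $(Q_i)$ is nondecreasing in the finite set $A'$, so it stabilises at some $Q_j$. By construction $Q_j$ contains $X$ and is closed under condition (2) in the definition of $Q(X)$; conversely, a straightforward induction on $i$ shows that every $Q_i$ lies inside any $Q\subseteq A'$ satisfying the two defining conditions. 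Hence $Q_j=Q(X)$ by minimality. Since each step of the iteration consults only $A'$, $C(\cdot)$ and $D(\cdot)$, all of which are functions of $(S,\ZZ)$, the iteration itself defines a function sending $(S,\ZZ)$ (with $X$ the free variable of the context in the sense of the remark following Lemma \ref{lem:KW1}) to $Q(X)$, which is exactly the required conclusion.

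There is essentially no obstacle here: the content of the claim is entirely bookkeeping, exploiting that the recursion in the definition of $Q(X)$ is driven solely by quantities that earlier claims have already tied to $(S,\ZZ)$.
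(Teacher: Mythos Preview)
Your proposal is correct and follows the same approach as the paper: the paper's one-line proof simply observes that $Q(X)$ is determined by the values $C(Y),D(Y)$ for $Y\in A'$, which are in turn determined by $(S,\ZZ)$. You spell out the same observation in greater detail by exhibiting the iterative closure explicitly, which is fine but not strictly necessary.
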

\proof $Q(X)$ is determined only by the values of $C(Y), D(Y)$ for $Y\in A'$, which in turn are determined by $(S,\ZZ)$.\endproof
\begin{claim} Let $C$ be a component of $G[K\cap A']$, and let $X\in C$. Then $C=Q(X)$.
\end{claim}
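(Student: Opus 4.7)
The plan is to prove the two inclusions $Q(X) \subseteq C$ and $C \subseteq Q(X)$ separately. Both directions use the fact, established in the proof of the preceding claim, that for any vertex $Y \in K \cap A'$ one has $r_M(Y) = r-1$, the unique circuit contained in $Y$ is $C(Y)$, and the unique cocircuit disjoint from $Y$ is $D(Y)$; in particular, by Lemma~\ref{lem:induce2},
$$K \cap N(Y) = \{Y - x + z : x \in Y \setminus C(Y) \text{ or } z \in (E \setminus Y) \setminus D(Y)\}.$$

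For $Q(X) \subseteq C$, I would argue by induction along the inductive construction of $Q(X)$. The base case $X \in C$ is given. For the inductive step, suppose $X' \in Q(X) \cap C$ and $Y \in A'$ with $Y = X' - e + f$, $e \notin C(X')$ or $f \notin D(X')$. Since $X' \in K \cap A'$, the preceding claim yields $Y \in K$, hence $Y \in K \cap A'$. As $Y$ is adjacent to $X'$ in $G$ and both lie in $K \cap A'$, they belong to the same component of $G[K \cap A']$, which is $C$. Thus $Y \in C$, completing the induction.

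For the reverse inclusion, take any $Y \in C$ and fix a path $X = X_0, X_1, \ldots, X_k = Y$ in $G[K \cap A']$. I would show by induction that $X_i \in Q(X)$ for every $i$. The base case holds by definition. For the step, assume $X_i \in Q(X)$ and write $X_{i+1} = X_i - e + f$. Since $X_{i+1} \in K \cap N(X_i)$ and $X_i \in K \cap A'$, the characterization of $K \cap N(X_i)$ recalled above forces $e \notin C(X_i)$ or $f \notin D(X_i)$. Because $X_{i+1} \in A'$, the defining closure property of $Q(X)$ then gives $X_{i+1} \in Q(X)$.

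I expect the main subtlety, rather than a real obstacle, to be the justification that $C(X_i)$ and $D(X_i)$ coincide with the unique circuit in $X_i$ and the unique cocircuit disjoint from $X_i$: this is what makes Lemma~\ref{lem:induce2} applicable and hence forces the contrapositive needed in the second inclusion. That fact is not an immediate consequence of the definitions of $C(X_i)$ and $D(X_i)$ (which only record those neighbors outside $K \setminus A'$), but it was already derived in the proof of the preceding claim from the observations that $C(X_i) \neq \emptyset$ and $D(X_i) \neq \emptyset$ (both nonempty by minimality of $P$ and the dual argument). With that key fact in hand, both inclusions follow cleanly and the equality $C = Q(X)$ is established.
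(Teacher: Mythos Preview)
Your proof is correct and follows exactly the same two-inclusion strategy as the paper, only spelled out in more detail: the paper's one-sentence argument that ``$Q(X)$ will contain all of $N(Y)\cap K\cap A'$ whenever $Y\in Q(X)$'' is your path induction for $C\subseteq Q(X)$, and the paper's remark that the closure rule ``will never introduce elements from $N(Y)\setminus K$'' is your structural induction for $Q(X)\subseteq C$. Your explicit identification of the key input---that for $Y\in K\cap A'$ the sets $C(Y)$ and $D(Y)$ coincide with the genuine circuit and cocircuit, as shown in the proof of the preceding claim---is exactly what underlies both directions.
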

\proof Clearly $C\subseteq Q(X)$, since $Q(X)$ will contain all of $N(Y)\cap K\cap A'$ whenever $Y\in Q(X)$. But on the other hand, the closure operation (2) defining $Q$ will never introduce elements from $N(Y)\setminus K$ for $Y\in Q(X)$, so $C\supseteq Q(X)$ as well. \endproof
We distinguish between the singleton components of $G[K\cap A']$ and the other components. Put
$$W:=\{X\in A': \{X\}\text{ is a component of }G[K\cap A']\},$$
and let $C_1,\dots, C_t$ be the~$t$ components of $G[K\cap A']$ of cardinality $>1$. Let~$\mathcal{T}$ be the collection of all~$T\subseteq \bigcup_{i=1}^t C_i$ such that $|C_i \cap T| = 1$ for all $i = 1, \ldots, t$. The following is clear.
\begin{claim} \label{clB}
$|\mathcal{T}| \ge 2^t$.
\end{claim}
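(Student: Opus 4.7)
The plan is to observe that $\mathcal{T}$ is precisely the set of transversals of the family $\{C_1,\ldots,C_t\}$, and to count them by independent choices. Each $T\in\mathcal{T}$ is determined by choosing, for each $i\in\{1,\ldots,t\}$, a single element of $C_i$ to be the unique representative of $C_i$ in $T$. Since the $C_i$ are distinct components of the induced subgraph $G[K\cap A']$, they are pairwise disjoint, so these choices are independent and yield distinct sets $T$. Hence
$$|\mathcal{T}| = \prod_{i=1}^{t} |C_i|.$$

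The key observation is that every $C_i$ has cardinality at least $2$ by construction: the singleton components were explicitly removed and collected into $W$, while $C_1,\ldots,C_t$ were defined as the components of $G[K\cap A']$ of cardinality strictly greater than $1$. Therefore $|C_i|\ge 2$ for every $i$, and the product above is bounded below by $2^t$, giving the required inequality $|\mathcal{T}|\ge 2^t$.

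There is no real obstacle here; this is an immediate counting step whose sole purpose is to formalise that after isolating the singleton components as $W$, the remaining "big" components contribute at least one bit of freedom each to the encoding. The only thing to be careful about is that $\mathcal{T}$ as defined picks exactly one element from each $C_i$ (not at most one), so the product is over all $t$ factors and none is empty.
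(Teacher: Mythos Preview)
Your proof is correct and matches the paper's reasoning; the paper in fact states this claim as ``clear'' without further argument, and your transversal count $|\mathcal{T}|=\prod_{i=1}^t |C_i|\ge 2^t$ is exactly the intended justification.
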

For the remainder of the proof, let~$T \in \mathcal{T}$.
\begin{claim}\label{clC} $|T|=t$, and $T\cup W$ is a stable set of $G$.\end{claim}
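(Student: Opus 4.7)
The plan is to derive both assertions directly from the definition of a connected component together with the fact that $T\cup W\subseteq K\cap A'$, so that edges of $G$ between such vertices coincide with edges of the induced subgraph $G[K\cap A']$.

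For the cardinality claim, the collection $\mathcal{T}$ was defined so that $|C_i\cap T|=1$ for every $i=1,\ldots,t$. Since the sets $C_1,\ldots,C_t$ are distinct connected components of $G[K\cap A']$, they are pairwise disjoint. Therefore $T=\bigsqcup_{i=1}^t(C_i\cap T)$ has exactly $t$ elements.

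For the stability claim, I would verify separately that no edge of $G$ lies within $T$, within $W$, or between $T$ and $W$. Two vertices of $T$ belong to distinct components $C_i\neq C_j$, so they cannot be adjacent in $G[K\cap A']$, and since both lie in $K\cap A'$ this rules out adjacency in $G$ as well. Two vertices $X,Y\in W$ are distinct singleton components $\{X\}\neq\{Y\}$ of $G[K\cap A']$, so again they cannot be adjacent in $G$. Finally, if $X\in T$ and $Y\in W$ were adjacent in $G$, then $X,Y\in K\cap A'$ would be joined by an edge of $G[K\cap A']$, merging the singleton component $\{Y\}$ with the component $C_i$ containing $X$, which contradicts the fact that $\{Y\}$ is itself a component.

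There is no real obstacle here beyond being precise about the induced-subgraph correspondence; the claim is essentially the defining property of components applied to the explicit partition $W\sqcup C_1\sqcup\cdots\sqcup C_t$ of $K\cap A'$. The work of the lemma is concentrated in the earlier claims; this step only prepares the notation ($T\cup W$ as a stable set disjoint union) that will feed into the final encoding and recovery statements.
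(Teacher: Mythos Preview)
Your argument is correct and is essentially the paper's own proof, just written out in full: the paper observes in one sentence that $T\cup W$ selects a single vertex from each component of the induced subgraph $G[K\cap A']$, hence is stable, and that $|T|=t$ is clear. Your three-case check simply unpacks this observation.
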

\proof That $|T|=t$ is clear, and by construction $T\cup W$ contains a single vertex from each component of an induced subgraph of $G$, hence must be stable.\endproof
\begin{claim} \label{clD}$M$ is uniquely determined by  $(S, \ZZ, T\cup W)$.
\end{claim}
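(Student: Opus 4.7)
\medskip

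\noindent\textbf{Proof plan.} The plan is to show that from the triple $(S,\ZZ,T\cup W)$ one can reconstruct the set $K=\binom{E}{r}\setminus \BB(M)$ of non-bases of $M$, which clearly determines $M$. The decomposition $K=(K\setminus A)\cup P\cup (K\cap A')$ splits this task into three pieces, of which the first two have already been shown to be recoverable from $(S,\ZZ)$ alone: Lemma~\ref{lem:BPvdP} yields $K\setminus A$ (and $A$) from $(S,\ZZ)$, and one of the earlier claims shows that $P\subseteq K\cap A$ is likewise determined by $(S,\ZZ)$, so $A'=A\setminus P$ is determined as well. The only remaining piece to recover is $K\cap A'$, and this is precisely where the extra data $T\cup W$ is used.

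The key observation is that the quantities $C(X)$, $D(X)$ and hence $Q(X)$ were shown to be determined by $(S,\ZZ)$ for every $X\in A'$, irrespective of whether $X$ lies in $K$. Consequently, once we exhibit a set of ``witnesses'' that intersects every component of $G[K\cap A']$, the earlier claim $C=Q(X)$ will allow us to recover the whole of $K\cap A'$ by taking a union of $Q$-sets. The set $T\cup W$ is designed to be exactly such a hitting set: $W$ \emph{is} the set of singleton components, and $T$ contains one vertex from each of the non-singleton components $C_1,\ldots,C_t$ by construction of $\mathcal T$. Therefore I propose to establish the identity
$$
  K\cap A' \;=\; \bigcup_{X\in T\cup W} Q(X),
$$
whose right-hand side is visibly a function of $(S,\ZZ,T\cup W)$.

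For the inclusion $\supseteq$, note that if $X\in W$ then $\{X\}$ is a component of $G[K\cap A']$, so $Q(X)=\{X\}\subseteq K\cap A'$; and if $X\in T\subseteq K\cap A'$ then $Q(X)$ equals the component of $X$ in $G[K\cap A']$ by the previous claim. The inclusion $\subseteq$ is immediate because $T\cup W$ meets every component of $G[K\cap A']$ at least once, and each such component equals $Q(X)$ for any of its elements. Combining this identity with $K=(K\setminus A)\cup P\cup(K\cap A')$ shows that $K$, and hence $M$, is determined by $(S,\ZZ,T\cup W)$.

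The step I expect to require the most care is the bookkeeping around whether one actually needs to separate $T$ from $W$ inside the union $T\cup W$. The argument above shows this is unnecessary: $Q(X)$ is defined uniformly for every $X\in A'$ from data already determined by $(S,\ZZ)$, so the reconstruction algorithm simply computes $Q(X)$ for each $X$ in the given set and takes the union, without having to know which elements came from $T$ and which from $W$. This is what makes the encoding $(S,\ZZ,T\cup W)$ an honest encoding of $M$ rather than of the richer object $(S,\ZZ,T,W)$, and it is the feature that later allows $|\mathcal T|\ge 2^t$ distinct encodings of the same matroid $M$.
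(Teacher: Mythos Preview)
Your proposal is correct and follows essentially the same approach as the paper: the paper's proof is the single line ``$K=(K\setminus A)\cup P\cup\bigcup_{X\in T\cup W}Q(X)$, and $(K\setminus A)$, $P$ and $Q$ are determined by $(S,\ZZ)$,'' which is exactly the identity you establish, with your argument supplying the details behind both inclusions. Your closing remark about not needing to distinguish $T$ from $W$ within the union is a nice clarification that the paper leaves implicit.
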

\proof $K=(K\setminus A) \cup P\cup \bigcup_{X\in T\cup W} Q(X)$, and $(K\setminus A)$, $P$ and $Q$ are determined by $(S,\ZZ)$.\endproof
\begin{claim} $W\subseteq W(M)$.
\end{claim}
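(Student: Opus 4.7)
The plan is to verify that each $X \in W$ is a circuit-hyperplane of $M$, which by definition of $W(M)$ gives $W \subseteq W(M)$. Unpacking the definition, $X \in W$ means $X \in K \cap A'$ is an isolated vertex of $G[K \cap A']$; in particular $N(X) \cap K \cap A' = \emptyset$. From the analysis used earlier to show the ``if $X \in K$ and $e \notin C(X)$ or $f \notin D(X)$ then $Y \in K$''-claim, we already know $C(X) \neq \emptyset$ and $D(X) \neq \emptyset$ (otherwise $X^{f^\ast} \subseteq K$ or $X_{e^\ast} \subseteq K$ would have forced $X \in P$), so $r_M(X) = r-1$ and Lemma~\ref{lem:induce2} identifies $C(X)$ as the unique circuit contained in $X$ and $D(X)$ as the unique cocircuit disjoint from $X$, with
$$K \cap N(X) = \{X - x + y : x \in X \setminus C(X) \text{ or } y \in (E\setminus X) \setminus D(X)\}.$$

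It then suffices to show $C(X) = X$ and $D(X) = E \setminus X$, since then $X$ is itself a circuit and $E\setminus X$ is a cocircuit, making $X$ a hyperplane, hence a circuit-hyperplane. I will argue by contradiction for $C(X) = X$ (the dual argument will handle $D(X)$). Suppose there exists $x \in X \setminus C(X)$. By the characterization of $K \cap N(X)$ above, $X - x + y \in K$ for every $y \in E \setminus X$, i.e.\ $X_x \subseteq K$.

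The key observation is now that $X$ being isolated in $G[K \cap A']$ forces $X_x \cap (K \cap A') = \emptyset$, since $X_x \subseteq N(X)$. Combined with the decomposition $K = (K\setminus A) \cup P \cup (K \cap A')$, we obtain $X_x \subseteq (K \setminus A) \cup P$. But then the first closure rule in the definition of $P$ (applied to $e := x$) forces $X \in P$, contradicting $X \in A' = A \setminus P$. Thus $C(X) = X$; the symmetric argument using the second closure rule and $X^{f^\ast}$ gives $D(X) = E \setminus X$, completing the proof.

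The argument is essentially definition-chasing, so the only real subtlety is to invoke the minimality of $P$ correctly: one must verify that $X_x$ avoids $K \cap A'$ (this is exactly where the isolation hypothesis $X \in W$ enters) so that $X_x$ lies in the set $P \cup (K \setminus A)$ appearing in the defining rule for $P$. Once this is in place, the contradiction with $X \in A'$ is immediate.
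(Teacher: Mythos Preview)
Your proof is correct and follows essentially the same route as the paper's: both reduce to finding some $e$ with $X_e \subseteq K$ (or dually $f$ with $X^f \subseteq K$), then use the isolation of $X$ in $G[K\cap A']$ to conclude $X_e \subseteq P \cup (K\setminus A)$ and hence $X \in P$, a contradiction. The only difference is that you reach the existence of such an $e$ via the $C(X), D(X)$ machinery and Lemma~\ref{lem:induce2}, whereas the paper observes directly that a dependent $r$-set which is not a circuit-hyperplane must have some $X_e \subseteq K$ or some $X^f \subseteq K$; this makes the paper's version a couple of lines shorter but is not a substantive difference.
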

\proof Suppose $X\in W$. If $X$ is not a circuit-hyperplane of $M$, then there is an $e\in X$ so that $X_e\subseteq K$ or an $f\in E\setminus X$ so that $X^f\subseteq K$. In the former case, since $X$ is an isolated vertex of $G[K\cap A']$, we have $X_e\cap A'=\emptyset$, and hence $X_e\subseteq P\cup (K\setminus A)$. By definition of $P$, it follows that $X\in P$, a contradiction. The analogous argument settles the case when $X^f\subseteq K$.\endproof
\begin{claim}\label{clE}$U(M)$ is uniquely determined by  $(S, \ZZ, T)$.
\end{claim}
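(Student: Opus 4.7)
The plan is to show that $U(M)$ can be reconstructed from $(S,\ZZ,T)$ by first reading off a large piece of $U(M)$ directly from the components $C_1,\ldots,C_t$, and then classifying each of the remaining candidates in $(K\setminus A)\cup P$ by testing whether it is isolated in $G[K]$. The subtle point will be that isolation of such a candidate in $G[K]$ can be detected from $(S,\ZZ,T)$ alone, even though $W$ itself is not part of the encoding.

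First I would note that, by the preceding claims, the pair $(S,\ZZ)$ alone determines $K\setminus A$, the set $P$, the subset $A'=A\setminus P$, and the sets $C(X),D(X),Q(X)$ for every $X\in A'$. Since $T$ contains exactly one vertex from each non-singleton component $C_i$ of $G[K\cap A']$, and each such component coincides with $Q(X)$ for any $X\in T\cap C_i$, the family $\{C_1,\dots,C_t\}$, and in particular the union $C_1\cup\cdots\cup C_t$, is determined by $(S,\ZZ,T)$.

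Next I would establish $C_1\cup\cdots\cup C_t\subseteq U(M)$. Any $Y$ in a non-singleton component of $G[K\cap A']$ has a neighbour in $K\cap A'\subseteq K$, whereas any circuit-hyperplane $Z$ of $M$ satisfies $K\cap N(Z)=\emptyset$: by Lemma~\ref{lem:induce2} applied to $Z$ with unique circuit $C=Z$ and unique cocircuit $D=E\setminus Z$, we get $K\cap N(Z)=\emptyset$. Hence $Y$ is not a circuit-hyperplane, i.e.\ $Y\in U(M)$.

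The main step is then to determine $U(M)\cap((K\setminus A)\cup P)$. For $X$ in this set we have $X\in K$, so $X\in U(M)$ iff $X$ is not a circuit-hyperplane iff $X$ has some neighbour in $K$. Since $K=(K\setminus A)\cup P\cup W\cup(C_1\cup\cdots\cup C_t)$ and only the piece $W$ is not visible from $(S,\ZZ,T)$, the key subtlety is to show that no $X\in K$ can have a neighbour in $W$: if $Y\in W$ were adjacent to some $X\in K$, then by the previous claim $Y$ is a circuit-hyperplane, so by the isolation argument above all Johnson neighbours of $Y$ (including $X$) would be bases, contradicting $X\in K$. Consequently, for $X\in(K\setminus A)\cup P$, membership in $U(M)$ is equivalent to $X$ having a neighbour in $(K\setminus A)\cup P\cup(C_1\cup\cdots\cup C_t)$, a condition decidable from $(S,\ZZ,T)$. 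Together with $C_1\cup\cdots\cup C_t\subseteq U(M)$, this recovers $U(M)$ as a function of $(S,\ZZ,T)$, as required.
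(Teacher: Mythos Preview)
Your proof is correct and follows essentially the same route as the paper. The paper's version is more compressed: it sets $\tilde{U}:=(K\setminus A)\cup P\cup\bigcup_{X\in T}Q(X)=K\setminus W$, notes that $U(M)\subseteq\tilde U$ since $W\subseteq W(M)$, and then asserts that $U(M)$ is obtained from $\tilde U$ by deleting the isolated vertices of $G[\tilde U]$. Your argument unpacks exactly why that last assertion holds --- namely, that no vertex of $W$ is adjacent to any vertex of $K$ (since $W\subseteq W(M)$ and circuit-hyperplanes are isolated in $G[K]$), so isolation in $G[\tilde U]$ coincides with isolation in $G[K]$ --- and splits the verification into the two pieces $C_1\cup\cdots\cup C_t$ and $(K\setminus A)\cup P$. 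The only point you leave slightly implicit is that $U(M)\subseteq\tilde U$ (equivalently $U(M)\cap W=\emptyset$), but this is immediate from the previous claim $W\subseteq W(M)$, which you do invoke.
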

\proof $W\subseteq W(M)$, so $U(M)\subseteq K\setminus W = (K\setminus A) \cup P\cup\bigcup_{X\in T} Q(X) =:\tilde{U}$, which  is determined by $(S,\ZZ, T)$. Now $U(M)$ arises from $\tilde{U}$ by removing the isolated vertices of $G[\tilde{U}]$ from $\tilde{U}$.
\endproof
The lemma now follows from claims \ref{clA}, \ref{clB}, \ref{clC}, \ref{clD}, and \ref{clE}.\endproof
For the remainder of this section, let $t(M)$ denote the value of $t$ for which the above lemma holds true.

In \cite{PvdP2015B} we used the precursor of Lemma~\ref{thm:encoding}  to show that $\log m(n)=(1+o(1))\log s(n)$ as $n\rightarrow\infty$. We also derived the following result, which we will apply in Section~\ref{sec:few}.
\begin{theorem} \label{thm:rank_bound}If $\beta>\sqrt{\ln(2)/2}$, then almost all matroids $M$ on $n$ elements have $|r(M)-n/2|<\beta\sqrt{n}$.\noproof
\end{theorem}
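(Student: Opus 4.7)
The plan is to combine the upper bound $\log m(n,r) \leq \zeta(n) + \alpha_{n,r}\binom{n}{r}$ derived in Section~\ref{ss:counting-matroids} with the lower bound $\log m(n) \geq \log s(n) \geq \binom{n}{n/2}/n$ from Theorem~\ref{thm:knuth}, and to exploit the Gaussian decay of $\binom{n}{r}/\binom{n}{n/2}$ off the center to force $\log m(n,r)$ strictly below $\log m(n)$ whenever $|r - n/2| \geq \beta\sqrt{n}$. The threshold $\beta > \sqrt{\ln(2)/2}$ enters precisely because the bound $\alpha_{n,r} \leq 2/n$ contributes a constant factor of $2$, which the decay factor $e^{-2\beta^2}$ must beat to bring the product below $1$.

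Concretely, I would first invoke the Stirling-based estimate
$$\frac{\binom{n}{r}}{\binom{n}{n/2}} = (1+o(1))\,e^{-2(r - n/2)^2/n}$$
valid for $r = n/2 + O(\sqrt{n})$, and observe that by log-concavity of $r \mapsto \binom{n}{r}$ the left-hand side is monotonically decreasing in $|r - n/2|$. Hence, uniformly over all $r$ with $|r - n/2| \geq \beta\sqrt{n}$,
$$\binom{n}{r} \leq (1+o(1))\,e^{-2\beta^2}\binom{n}{n/2}.$$
Combining this with $\alpha_{n,r} \leq 2/n$ from Lemma~\ref{lem:sigma} and $\zeta(n) = O(\log(n)^2/n^2)\binom{n}{n/2}$ yields
$$\log m(n,r) \leq \bigl(2e^{-2\beta^2} + o(1)\bigr)\frac{\binom{n}{n/2}}{n}.$$
Since $\beta > \sqrt{\ln(2)/2}$ is equivalent to $2e^{-2\beta^2} < 1$, there is a $\delta > 0$ such that, for all large enough $n$ and all $r$ with $|r - n/2| \geq \beta\sqrt{n}$, one has $\log m(n,r) \leq (1-\delta)\binom{n}{n/2}/n$. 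The Knuth lower bound then gives $m(n,r)/m(n) \leq 2^{-\delta \binom{n}{n/2}/n}$; summing over the at most $n+1$ values of $r$ outside the interval still tends to $0$, since $\binom{n}{n/2}/n$ grows super-polynomially in $n$.

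No single step is delicate. The only point requiring mild care is verifying that the Gaussian estimate is valid \emph{uniformly} in $r$ across the full range $|r - n/2| \geq \beta\sqrt{n}$: this splits naturally into the nearby range $|r - n/2| = \Theta(\sqrt{n})$, handled by Stirling, and the distant range, handled by monotonicity via log-concavity. Tracking the constants confirms that $2e^{-2\beta^2} < 1$ is the exact threshold, which explains the appearance of $\sqrt{\ln(2)/2}$ in the statement.
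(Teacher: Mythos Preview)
The paper does not actually prove Theorem~\ref{thm:rank_bound}: it is stated with \verb|\noproof| and attributed to the authors' earlier work~\cite{PvdP2015B}. So there is no in-paper proof to compare against, only a citation.

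That said, your argument is correct and is in fact the natural way to recover the result using the machinery already developed in \emph{this} paper. The bound $\log m(n,r)\leq \zeta(n)+\alpha_{n,r}\binom{n}{r}$ is exactly what is established in the proof at the end of Section~\ref{ss:counting-matroids}; combining it with $\alpha_{n,r}\leq 2/n$ (Lemma~\ref{lem:sigma}), the Gaussian-type estimate $\binom{n}{r}\leq (1+o(1))e^{-2\beta^2}\binom{n}{n/2}$ for $|r-n/2|\geq\beta\sqrt{n}$, and the Knuth--Graham--Sloane lower bound (Theorem~\ref{thm:knuth}) yields precisely the threshold $2e^{-2\beta^2}<1$, i.e.\ $\beta>\sqrt{\ln(2)/2}$. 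Your handling of uniformity via log-concavity for $r$ far from $n/2$ is the right patch. The argument in~\cite{PvdP2015B} proceeds along the same lines (that paper is where the encoding bound on $m(n,r)$ was first refined), so your proposal is essentially a reconstruction of the cited proof from the ingredients available here.
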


\subsection{A proxy for the class of sparse paving matroids} 
In this section, we prove Theorem~\ref{thm:ns}. The key insight is the following. For a given matroid~$M$, $t(M)$ is either ``small'' or ``large''. In the former case, Lemma~\ref{thm:encoding} shows that~$U(M)$ has a concise description; in the latter case, $M$ has many different encodings.

We define the class~$\NS$, which appears in Theorem~\ref{thm:ns}, essentially as the class of matroids for which~$t(M)$ is small.
For each $n,r\in \N$, let  $$\NS_{n,r}:=\left\{M\in\MM_{n,r}: M\text{ has no loops or coloops}, t(M)\leq 2\zeta(n)\right\},$$ and put $\NS=\bigcup \NS_{n,r}$.

\begin{lemma} \label{lem:ns_all} Asymptotically almost all matroids are contained in $\NS$.\end{lemma}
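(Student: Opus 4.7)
\smallskip

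\noindent\textbf{Proof proposal.} The plan is a double-counting argument that plays the lower bound $2^{t(M)}$ on the number of encodings of $M$ (coming from Lemma~\ref{thm:encoding}) against the global upper bound on the total number of valid encodings. First, by Theorem~\ref{thm:simple}, the matroids on $[n]$ that have a loop or a coloop already form a thin class, so it suffices to bound the number of loopless, coloopless matroids $M\in\MM_{n,r}$ with $t(M)>2\zeta(n)$, summed over $r$.

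Fix $0<r<n$, and let $\mathcal{L}_{n,r}\subseteq \MM_{n,r}$ be the loopless coloopless matroids. For each $M\in \mathcal{L}_{n,r}$, Lemma~\ref{thm:encoding} produces a pair $(S,\ZZ)$ (with $|S|\leq \lceil\sigma_{n,r}\binom{n}{r}\rceil$ and $|\ZZ|\leq 2|S|$), a set $W$, and a collection $\mathcal{T}$ of size at least $2^{t(M)}$ such that each $T\in\mathcal{T}$ yields a triple $(S,\ZZ,T\cup W)$ in which $T\cup W$ is a stable set of $J(n,r)$, and such that this triple uniquely determines~$M$. In particular, the triples produced from distinct pairs $(M,T)$ are distinct. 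The set of all valid triples $(S,\ZZ,Y)$ with $Y$ a stable set of $J(n,r)$ has size at most $z(n,r)\cdot s(n,r)$, so double-counting gives
$$
\sum_{M\in\mathcal{L}_{n,r}} 2^{t(M)}\;\leq\; z(n,r)\cdot s(n,r).
$$

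Now every $M\in\mathcal{L}_{n,r}\setminus\NS_{n,r}$ contributes at least $2^{2\zeta(n)}$ to the left-hand side, so by Lemma~\ref{lem:zbound} (for sufficiently large $n$)
$$
\bigl|\mathcal{L}_{n,r}\setminus\NS_{n,r}\bigr|\;\leq\; z(n,r)\cdot s(n,r)\cdot 2^{-2\zeta(n)}\;\leq\; s(n,r)\cdot 2^{-\zeta(n)}.
$$
Summing over $r$ (using $\sum_r s(n,r)=s(n)$) and adding the loopy/coloopy matroids handled by Theorem~\ref{thm:simple}, we obtain
$$
|\MM_n\setminus\NS_n|\;\leq\; o(m(n))+s(n)\cdot 2^{-\zeta(n)}.
$$
Since $s(n)\leq m(n)$ and $\zeta(n)\to\infty$, the right-hand side is $o(m(n))$, which proves the lemma.

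There is no serious obstacle beyond correctly interpreting Lemma~\ref{thm:encoding}: the only thing to check is that the $2^{t(M)}$ triples it produces are genuinely distinct encodings even across different matroids, but this is exactly condition~(3) of that lemma. The bound $\log z(n,r)\leq \zeta(n)$ then ensures that the term $z(n,r)$ is eaten by a single factor $2^{\zeta(n)}$ from the $2^{2\zeta(n)}$ threshold, leaving a clean extra factor $2^{-\zeta(n)}$ against the trivial upper bound $s(n)\leq m(n)$.
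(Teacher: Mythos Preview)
Your proof is correct and follows essentially the same approach as the paper: both use Lemma~\ref{thm:encoding} to produce at least $2^{t(M)}$ distinct encodings $(S,\ZZ,T\cup W)$ for each loopless, coloopless $M$, bound the total number of such triples by $z(n,r)\cdot s(n,r)$, and then combine $\log z(n,r)\le\zeta(n)$ with the threshold $t(M)>2\zeta(n)$ to get $|\mathcal{L}_{n,r}\setminus\NS_{n,r}|\le s(n,r)\,2^{-\zeta(n)}$ before summing over~$r$. Your explicit double-counting inequality $\sum_{M}2^{t(M)}\le z(n,r)s(n,r)$ is just a slightly more verbose formulation of the same step.
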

\proof We consider the set $\mathcal{M}=\bigcup \mathcal{M}_{n,r}$, where
$$\mathcal{M}_{n,r}:=\left\{M\in\MM_{n,r}: M\text{ has no loops or coloops}, t(M)> 2\zeta(n)\right\}.$$
By Theorem \ref{thm:simple}, almost all matroids have neither loops nor coloops. 
To show that almost all matroids are in $\NS$, it will therefore suffice to show that $\mathcal M$ is thin. 

We bound $|\mathcal{M}_{n,r}|$. By Lemma~\ref{thm:encoding} each matroid $M\in \mathcal{M}_{n,r}$  has at least $2^{2\zeta(n)}$ encodings $(S, \ZZ, T\cup W)$, where $S$ and $T\cup W$ are stable sets of $J(n,r)$,  $|S|\leq \sigma_{n,r}\binom{n}{r}$ and $\ZZ\subseteq 2^{[n]}\times \{0,\ldots, n-1\}$ is such that  $|\ZZ|\leq 2|S|$.  Recall that~$z(n,r)$ counts the number of pairs~$(S,\ZZ)$, hence
$$|\mathcal{M}_{n,r}|\leq  z(n,r)s(n,r)2^{-2\zeta(n)}.$$
By Lemma~\ref{lem:zbound}, $\log z(n,r) \le \zeta(n)$. Note that $\zeta(n) \ge \log n$ for sufficiently large~$n$. It follows that
$$
	\frac{|\MM_n \cap \mathcal{M}|}{m(n)} \le \frac{\sum_{r=0}^n |\mathcal{M}_{n,r}|}{s(n)} \le \frac{s(n)}{n s(n)} = \frac{1}{n} \to 0
$$
as~$n\to\infty$.
\endproof

Let $\mathcal{U}_{n,r}:=\{U(M): M\in  \NS_{n,r}\}$, and let $\Upsilon(n):=5 \log(n)\zeta(n)\leq O\left(\log(n)^3/n^2\binom{n}{n/2}\right)$.
\begin{lemma} \label{lem:ns_ubound}For all sufficiently large $n$, and all $r$ such that $0\leq r\leq n$, we have $\log |\mathcal{U}_{n,r}|\leq \Upsilon(n).$
\end{lemma}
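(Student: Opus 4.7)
The plan is to upper-bound the number of possible values of $U(M)$ by counting the triples $(S,\ZZ,T)$ promised by Lemma~\ref{thm:encoding}, noting that each such triple determines $U(M)$.

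For each $M \in \NS_{n,r}$, Lemma~\ref{thm:encoding} provides sets $S, \ZZ$ and a collection $\mathcal{T}$; choose any single $T \in \mathcal{T}$. Then $U(M)$ is determined by $(S,\ZZ,T)$, so
$$|\mathcal{U}_{n,r}| \le \bigl|\{(S,\ZZ,T) : (S,\ZZ) \text{ as in Lemma \ref{lem:BPvdP}},\ T \subseteq \tbinom{[n]}{r},\ |T| \le 2\zeta(n)\}\bigr|.$$
Since $M \in \NS_{n,r}$ forces $t(M) \le 2\zeta(n)$, and $|T|=t(M)$, the constraint $|T| \le 2\zeta(n)$ is legitimate.

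Next I would count. The number of pairs $(S,\ZZ)$ is $z(n,r)$, which is at most $2^{\zeta(n)}$ by Lemma~\ref{lem:zbound}. For the number of $T$'s of cardinality at most $2\zeta(n)$ in $\binom{[n]}{r}$, I would apply the standard estimate $\sum_{i\le k}\binom{N}{i} \le (eN/k)^k$ with $N = \binom{n}{r} \le \binom{n}{n/2}$ and $k = 2\zeta(n)$. Substituting the definition $\zeta(n) = 57\log(n)^2 n^{-2}\binom{n}{n/2}$ gives
$$\frac{e\binom{n}{r}}{2\zeta(n)} \le \frac{e\,n^2}{114\log(n)^2} \le n^2$$
for $n$ sufficiently large, so the logarithm of the number of admissible $T$'s is at most $2\zeta(n)\cdot \log(n^2) = 4\log(n)\zeta(n)$.

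Combining the two counts,
$$\log |\mathcal{U}_{n,r}| \le \zeta(n) + 4\log(n)\zeta(n) \le 5\log(n)\zeta(n) = \Upsilon(n),$$
for $n$ large enough that $\log(n) \ge 1$, which is the required bound. There is no real obstacle here: the content of the lemma is exactly a bookkeeping assembly of Lemma~\ref{thm:encoding} (which provides the concise description of $U(M)$ when $t(M)$ is small) together with Lemma~\ref{lem:zbound}; the only thing to watch is that the estimate $e\binom{n}{n/2}/(2\zeta(n)) \le n^2$ eventually holds, which is immediate from the definition of $\zeta(n)$.
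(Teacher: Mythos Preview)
Your proof is correct and follows essentially the same route as the paper: count the triples $(S,\ZZ,T)$ using Lemma~\ref{lem:zbound} for the pair $(S,\ZZ)$ and the standard binomial-sum estimate for $T$, then assemble to get $\zeta(n)+4\log(n)\zeta(n)\le 5\log(n)\zeta(n)=\Upsilon(n)$. The only omission is the trivial boundary cases $r=0$ and $r=n$, where $\NS_{n,r}$ is empty (every rank-$0$ or rank-$n$ matroid on $n>0$ elements has a loop or coloop) and hence $\mathcal{U}_{n,r}=\emptyset$; the paper notes this explicitly, but it requires no new idea.
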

\proof Note that~$\mathcal{U}_{n,0} = \mathcal{U}_{n,n} = \emptyset$, which proves the lemma for~$r=0$ and~$r=n$. So we may assume that $0<r<n$.
Let $N:=\binom{n}{n/2}$. For each of the matroids $M\in  \NS_{n,r}$, there is a triple $(S,\ZZ, T)$ which determines $U(M)$, where $|S|\leq \sigma_{n,r}N$, $|\ZZ|\leq 2|S|$, and $|T|\leq 2\zeta(n)$.  There are at most $z(n,r)$ possibilities for the pair $(S,\ZZ)$  and at most
$$ \tau(n,r):=\sum_{i=0}^{2\zeta(n)}\binom{N}{i}\leq \left(\frac{\e N}{2\zeta(n)}\right)^{2\zeta(n)}$$
possibilities for $T$, given these upper bounds on the cardinalities of $S,T,\ZZ$. By Lemma \ref{lem:zbound} 
$$\log \left|\mathcal{U}_{n,r}\right|\leq \zeta(n)+  \log\tau(n,r)\leq \zeta(n) + 2\zeta(n)\log\left(\frac{\e N}{2\zeta(n)}\right). $$
Hence
$\log \left|\mathcal{U}_{n,r}\right|\leq 2\zeta(n)\log(n^2)(1+o(1))\leq \Upsilon(n)$ 
for sufficiently large $n$, as required.
\endproof
The typical use of the class $\NS$ is for showing that a class of matroids is thin, which generic argument  goes as follows.

\begin{lemma}\label{lem:thin}
	There exists a constant~$c > 0$ such that if~$\mathcal{M} \subseteq \MM$ is a class of matroids satisfying
	$$
		\log |\{W(M) : M \in \mathcal{M} \cap \MM_{n,r}\}| \le \left(1-c\frac{\log(n)^3}{n}\right) \log m(n),
	$$
	for all $0 \le r \le n$ and $n$ sufficiently large, then~$\mathcal{M}$ is thin.
\end{lemma}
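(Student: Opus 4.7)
The approach is to combine Theorem~\ref{thm:ns} with the hypothesis, using that every matroid $M$ is uniquely determined by the pair $(U(M), W(M))$, since their union is the set of non-bases. By Lemma~\ref{lem:ns_all}, $\MM \setminus \NS$ is already thin, so it suffices to bound $|\mathcal{M} \cap \MM_n \cap \NS|$. For each rank~$r$, injectivity of the map $M \mapsto (U(M), W(M))$ on $\MM_{n,r}$ yields
$$
|\mathcal{M} \cap \MM_{n,r} \cap \NS| \le |\{U(M) : M \in \MM_{n,r} \cap \NS\}| \cdot |\{W(M) : M \in \mathcal{M} \cap \MM_{n,r}\}|,
$$
so taking $\log$ turns this into an additive bound on the two factors.

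For the first factor, Theorem~\ref{thm:ns} supplies a constant $c_0 > 0$ such that $\log |\{U(M) : M \in \MM_{n,r} \cap \NS\}| \le c_0 \log(n)^3/n^2 \cdot \binom{n}{n/2}$. To align this with the format of the hypothesis, I would use the Knuth-type lower bound $\log m(n) \ge \log s(n) \ge \binom{n}{n/2}/n$ from Theorem~\ref{thm:knuth}, converting the estimate to
$$
\log |\{U(M) : M \in \MM_{n,r} \cap \NS\}| \le c_0 \frac{\log(n)^3}{n} \log m(n).
$$
Choosing the lemma's constant as $c := c_0 + 1$ and combining with the hypothesis gives, for all $0 \le r \le n$ and $n$ sufficiently large,
$$
\log |\mathcal{M} \cap \MM_{n,r} \cap \NS| \le \left(1 - \frac{\log(n)^3}{n}\right) \log m(n).
$$

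Finally, summing over the $n+1$ possible ranks and adding in the contribution from $\MM_n \setminus \NS$,
$$
\frac{|\mathcal{M} \cap \MM_n|}{m(n)} \le (n+1)\, m(n)^{-\log(n)^3/n} + \frac{|\MM_n \setminus \NS|}{m(n)}.
$$
The second term vanishes by Lemma~\ref{lem:ns_all}. For the first, $\log m(n) \ge \binom{n}{n/2}/n$ grows superpolynomially in~$n$, so the quantity $\log(n)^3/n \cdot \log m(n)$ dwarfs $\log(n+1)$, and the term tends to~$0$. The only real subtlety is getting the constants to line up: once $c$ is chosen strictly larger than the constant $c_0$ supplied by Theorem~\ref{thm:ns}, the remainder is bookkeeping and the conclusion that $\mathcal{M}$ is thin follows.
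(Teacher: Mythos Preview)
Your proof is correct and follows essentially the same approach as the paper's: reduce to $\mathcal{M}\cap\NS$, factor each $|\mathcal{M}\cap\MM_{n,r}\cap\NS|$ via the map $M\mapsto(U(M),W(M))$, bound the $U$-factor using Theorem~\ref{thm:ns} and Knuth's lower bound $\log m(n)\ge \binom{n}{n/2}/n$, and then sum over $r$. The only cosmetic difference is that the paper chooses $c$ large enough to force $|\mathcal{M}\cap\MM_{n,r}\cap\NS|\le m(n)/(n+1)^2$ directly, whereas you take $c=c_0+1$ and verify afterward that $(n+1)\,m(n)^{-\log(n)^3/n}\to 0$; both are equivalent.
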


\begin{proof}
	Since asymptotically almost all matroids are in $\NS$, it suffices to show that  $\NS\cap \mathcal{M}$ is thin. Since each $M\in  \MM_{n,r}$ is determined by the pair $(U(M), W(M))$, it follows that 
$$|\MM_{n,r}\cap \NS\cap \mathcal{M}|\leq |\{U(M): M\in \NS_{n,r}\}|\cdot |\{W(M): M\in \mathcal{M}\cap \MM_{n,r}\}|,$$
and hence
$$\log |\MM_{n,r}\cap \NS\cap \mathcal{M}| \leq \Upsilon(n) + \left(1-c\frac{\log(n)^3}{n}\right) \log m(n).$$
From Theorem~\ref{thm:knuth}, we know that~$\Upsilon(n) = O\left(\frac{\log(n)^3}{n} \log m(n)\right)$, so~$|\MM_{n,r} \cap \NS\cap \mathcal{M}| \leq m(n)/(n+1)^2$, provided~$c$ is sufficiently large. We thus obtain
$$
\frac{|\MM_{n}\cap \NS\cap \mathcal{M}|}{m(n)}=
\sum_{r=0}^{n} \frac{|\MM_{n,r}\cap \NS\cap \mathcal{M}|}{m(n)}\leq 
(n+1)\max_r\frac{|\MM_{n,r}\cap \NS\cap \mathcal{M}|}{m(n)}\leq \frac{1}{n+1}\rightarrow 0$$
as~$n\to\infty$, as required.
\end{proof}

Theorem \ref{thm:ns} of the introduction follows directly from Lemma \ref{lem:ns_all} and Lemma \ref{lem:ns_ubound}, and its Corollary \ref{cor:thin} is Lemma \ref{lem:thin}. 


\section{\label{sec:few}Most matroids have few non-bases}

\subsection{Stable sets in vertex-transitive graphs} 
The following is the Product Theorem from \cite{ChungGrahamFranklShearer1986}, which derives from  {Shearer's Entropy Lemma}. For an exposition, see Jukna's textbook \cite[Thm 22.10]{Jukna2001}.
\begin{theorem} Let $A$ be a finite set and let $A_1, \ldots A_m\subseteq A$ be such that each $a\in A$ occurs in at least $k$ of the $A_i$. Let $\mathcal{S}\subseteq 2^A$. Then
$$|\mathcal{S}|^k\leq \prod_{i=1}^m |\mathcal{S}_i|,$$
where $\mathcal{S}_i:=\{S\cap A_i: S\in\mathcal S\}$ for $i=1,\ldots,m$. \noproof
\end{theorem}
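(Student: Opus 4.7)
The plan is to derive the inequality from Shearer's Entropy Lemma, which is the natural entropic counterpart of the Product Theorem. I would first recall the Shannon entropy $H$ of a discrete random variable, and state Shearer's Lemma: if $X=(X_a)_{a\in A}$ is any random variable indexed by $A$, and $A_1,\ldots,A_m\subseteq A$ are such that every $a\in A$ lies in at least $k$ of the $A_i$, then
$$
k\, H(X) \le \sum_{i=1}^m H(X_{A_i}),
$$
where $X_{A_i}:=(X_a)_{a\in A_i}$ denotes the projection.

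Next, I would apply the lemma to the uniform distribution on $\mathcal{S}$, identifying each $S\in\mathcal{S}$ with its indicator vector $X=(\mathbf{1}[a\in S])_{a\in A}$. Since $X$ is uniformly distributed on a set of cardinality $|\mathcal{S}|$, we have $H(X)=\log|\mathcal{S}|$. For each $i$ the marginal $X_{A_i}$ is in bijection with $S\cap A_i$, hence takes at most $|\mathcal{S}_i|$ distinct values, so $H(X_{A_i})\le \log|\mathcal{S}_i|$. Substituting these bounds into Shearer's inequality yields
$$
k\log|\mathcal{S}| \le \sum_{i=1}^m \log|\mathcal{S}_i| = \log\prod_{i=1}^m|\mathcal{S}_i|,
$$
and exponentiating produces the claimed bound $|\mathcal{S}|^k \le \prod_{i=1}^m |\mathcal{S}_i|$.

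The only nontrivial ingredient is Shearer's Lemma itself, which is standard and appears in the textbook already referenced in this section (Jukna, Theorem~22.10). Were one to include a self-contained proof, it would proceed by induction on $m$, using the chain rule $H(Y,Z)=H(Y)+H(Z\mid Y)$ together with the submodularity inequality $H(Z\mid Y)\le H(Z\mid Y')$ whenever $Y'$ is a subvector of $Y$. The main obstacle is purely bookkeeping: one has to verify that after peeling off one of the $A_i$, the remaining $A_j$ still cover each element of $A\setminus A_i$ the right number of times so that the inductive hypothesis applies, and handle the vertices in $A_i$ separately by the chain rule. Since this is a routine verification, I do not expect any substantial conceptual difficulty beyond quoting or re-deriving Shearer's Lemma.
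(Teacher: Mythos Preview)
Your proposal is correct and is precisely the standard derivation of the Product Theorem from Shearer's Entropy Lemma. The paper does not actually give a proof of this statement: it is quoted as the Product Theorem from \cite{ChungGrahamFranklShearer1986}, accompanied by the remark that it ``derives from Shearer's Entropy Lemma'' and a pointer to Jukna's textbook, with the \texttt{\textbackslash noproof} marker indicating the proof is omitted. Your argument is exactly the one the paper is alluding to, so there is nothing to compare.
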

We use the Product Theorem to bound the number of stable sets in induced subgraphs of which we only know the number of vertices.
\begin{lemma} Let $G=(V,E)$ be a vertex-transitive undirected graph and let $U\subseteq V$. Then $$\frac{\log i(G)}{|V|}\leq \frac{\log i(G[U])}{|U|}. $$
\end{lemma}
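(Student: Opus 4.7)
The proof will be a direct application of the Product Theorem, using the vertex-transitive automorphism group of $G$ to produce many copies of $U$ that cover $V$ evenly.

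Let $\Gamma := \mathrm{Aut}(G)$. Since $G$ is vertex-transitive, $\Gamma$ acts transitively on $V$, so by the orbit-counting principle, for every $v \in V$ the number of $\gamma \in \Gamma$ with $v \in \gamma(U)$ equals $|\Gamma| \cdot |U|/|V|$, independent of $v$. I would therefore apply the Product Theorem to the family $(\gamma(U))_{\gamma \in \Gamma}$ of subsets of $A := V$, with $m := |\Gamma|$ and $k := |\Gamma|\cdot|U|/|V|$, and with $\mathcal{S}$ taken to be the collection of all stable sets of $G$.

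Each $\gamma \in \Gamma$ is a graph automorphism, so the induced subgraph $G[\gamma(U)]$ is isomorphic to $G[U]$; hence $i(G[\gamma(U)]) = i(G[U])$. Moreover, for any stable set $S$ of $G$, the restriction $S \cap \gamma(U)$ is a stable set of $G[\gamma(U)]$, so $|\mathcal{S}_\gamma| \leq i(G[U])$ for every $\gamma$. The Product Theorem then yields
$$
 i(G)^{|\Gamma|\cdot|U|/|V|} \;=\; |\mathcal{S}|^k \;\leq\; \prod_{\gamma \in \Gamma}|\mathcal{S}_\gamma| \;\leq\; i(G[U])^{|\Gamma|}.
$$
Taking $\log$ on both sides and dividing by $|\Gamma|\cdot|U|$ gives the desired inequality.

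Technically the argument requires $k$ to be a positive integer; I would note that the Product Theorem admits an arbitrary positive integer cover multiplicity, and that the quantity $|\Gamma|\cdot|U|/|V|$ is indeed an integer by orbit-stabilizer (the stabilizer of any $v \in V$ has index $|V|$ in $\Gamma$, so $|V|$ divides $|\Gamma|$). No real obstacle is expected; the only subtlety is the bookkeeping that each $v \in V$ is covered the same number of times, which is exactly the content of vertex-transitivity.
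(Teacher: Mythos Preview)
Your proof is correct and follows essentially the same approach as the paper: both use the automorphism group $\Gamma$ to obtain the uniform cover $(\gamma(U))_{\gamma\in\Gamma}$ with multiplicity $k=|\Gamma|\,|U|/|V|$, apply the Product Theorem to the family of stable sets, and use that $G[\gamma(U)]\cong G[U]$. Your extra remark that $k$ is an integer by orbit--stabilizer is a nice touch the paper leaves implicit.
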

\proof Let $\Gamma$ be the automorphism group of $G$. Let $U^g:=\{g(u): u\in U\}$ for each $g\in \Gamma$. As $\Gamma$ acts transitively on $G$, we have $\left|\{g\in \Gamma: v\in U^g\}\right|= |\Gamma|\frac{|U|}{|V|}$ for each $v\in V$. Thus the sets $(U^g: g\in \Gamma)$ form a regular cover of $V$. Applying the Product  Theorem to bound the cardinality of $\mathcal{S}:=\{S\subseteq V: S\text{ a stable set of }G\}$, we find that 
$$|\mathcal{S}|^k\leq \prod_{g\in\Gamma} |\mathcal{S}_g|,$$
where $k= |\Gamma|\frac{|U|}{|V|}$ and $\mathcal{S}_g:=\{S\cap U^g: S\in\mathcal{S}\}$. Noting that 
$|\mathcal{S}|=i(G)$ and $|\mathcal{S}_g|=i(G[U^g])$, we derive that 
$$|\Gamma|\frac{|U|}{|V|} \log i(G)\leq   \sum_{g\in\Gamma}\log i(G[U^g])=|\Gamma| \log i(G[U]),$$
where the latter equation follows as each  $G[U^g]$ is isomorphic to $G[U]$. \endproof
If $G=(V,E)$ is a graph and  $U, U'\subseteq V$, then we denote $\delta(U,U'):=\{uv\in E: u\in U, v\in U'\}$.
\begin{lemma} Let $G=(V,E)$ be an undirected graph, and let $U, U'\subseteq V$ be disjoint sets so that $\delta(U,U')=\emptyset$. Then 
$\log i(G[U]) + \log i(G[U'])\leq \log i(G).$
\end{lemma}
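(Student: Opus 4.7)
The plan is to exhibit an injection from pairs of stable sets (one in $G[U]$ and one in $G[U']$) into the family of stable sets of $G$, which immediately yields $i(G[U])\cdot i(G[U'])\leq i(G)$ and thus the desired logarithmic inequality.

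Concretely, I would proceed as follows. Given a stable set $S$ of $G[U]$ and a stable set $S'$ of $G[U']$, consider $T := S\cup S'$. I claim $T$ is a stable set of $G$. Indeed, any edge of $G$ with both endpoints in $T$ would have both endpoints in $U\cup U'$, hence either (i) both in $U$, contradicting stability of $S$ in $G[U]$, (ii) both in $U'$, contradicting stability of $S'$ in $G[U']$, or (iii) one endpoint in $U$ and the other in $U'$, contradicting the hypothesis $\delta(U,U')=\emptyset$.

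The map $(S,S')\mapsto S\cup S'$ is injective because $U$ and $U'$ are disjoint, so given $T$ we recover $S=T\cap U$ and $S'=T\cap U'$. This gives
\[
i(G[U])\cdot i(G[U']) \;\leq\; i(G),
\]
and taking logarithms yields the statement.

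There is no serious obstacle: the disjointness of $U$ and $U'$ guarantees injectivity, and the absence of edges between $U$ and $U'$ (together with stability within each part) guarantees that the union is a stable set of $G$. The only small subtlety worth flagging is that stability of $S$ in the induced subgraph $G[U]$ is the same as stability of $S$ in $G$ (since $S\subseteq U$), so no information is lost when passing from induced subgraphs back to $G$.
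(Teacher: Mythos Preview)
Your proof is correct and follows essentially the same approach as the paper: both show that the union $S\cup S'$ of stable sets is a stable set of $G$ (using $\delta(U,U')=\emptyset$ and disjointness of $U,U'$), yielding $i(G[U])\,i(G[U'])\le i(G)$, then take logarithms. The only cosmetic difference is that you spell out the injectivity of $(S,S')\mapsto S\cup S'$ explicitly, while the paper leaves it implicit.
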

\proof If $S\subseteq U$ and $S'\subseteq U'$ are both stable sets of $G$, then  $E(G[S\cup S'])=\delta(S,S')\subseteq \delta(U,U')=\emptyset$, so $S\cup S'$ is a stable set as well. Hence $i(G[U])i(G[U'])\leq i(G)$ and the lemma follows upon taking the logarithm.\endproof

\subsection{An upper bound on the fraction of nonbases in most matroids}
\begin{lemma} \label{lem:big_u}Let $u\in [0,1]$ and let $U\subseteq \binom{[n]}{r}$ be such that $|U|\geq  u\binom{n}{r}$. Then 
$$\log\left|\{M\in \MM_{n,r}: U(M)=U\}\right|\leq (1-u)\log s(n,r).$$
\end{lemma}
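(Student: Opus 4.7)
The plan is to bound the number of matroids with $U(M)=U$ by the number of admissible circuit-hyperplane sets $W(M)$, and then combine the two Shearer-type lemmas of this subsection.

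First I would translate the problem into the Johnson graph $G:=J(n,r)$. If $U(M)=U$, then the elements of $W(M)$ are precisely the isolated vertices of $G[U\cup W(M)]$, so $W(M)$ is a stable set of $G$, disjoint from $U$, with no neighbour in $U$. Setting $V' := V(G) \setminus (U\cup N(U))$, this says exactly that $W(M)$ is a stable set of the induced subgraph $G[V']$. Since $M$ is determined by the pair $(U(M),W(M))$ and $U$ is prescribed,
$$|\{M \in \MM_{n,r} : U(M)=U\}| \leq i(G[V']).$$

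It remains to prove $\log i(G[V']) \leq (1-u)\log s(n,r)$. By the very definition of $V'$, the sets $U$ and $V'$ are disjoint and there are no $G$-edges between them. The second lemma of this subsection therefore gives
$$\log i(G[U]) + \log i(G[V']) \leq \log i(G) = \log s(n,r).$$
Since $J(n,r)$ is vertex-transitive and $|U|\geq u|V(G)|$, the first lemma of this subsection yields
$$\log i(G[U]) \geq \frac{|U|}{|V(G)|}\log i(G) \geq u\log s(n,r).$$
Subtracting produces the required bound.

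The only slightly delicate point is the initial graph-theoretic translation: one must notice that $W(M)$ avoids the \emph{entire} closed neighbourhood of $U$, not merely $U$ itself, which is what lets the two lemmas combine. After that, vertex-transitivity lower-bounds $\log i(G[U])$, and the disjoint-non-adjacent-subset inequality converts this lower bound into an upper bound on $\log i(G[V'])$; the argument is otherwise purely mechanical.
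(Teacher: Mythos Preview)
Your proof is correct and follows essentially the same route as the paper: both set $U':=V(G)\setminus(U\cup N(U))$, observe that $W(M)$ must be a stable set of $G[U']$, and then combine the two Shearer-type lemmas of the subsection (vertex-transitivity gives $\log i(G[U])\geq u\log i(G)$, and the disjoint-non-adjacent inequality turns this into $\log i(G[U'])\leq (1-u)\log i(G)$). The only cosmetic difference is that the paper phrases the bound in terms of $\{W(M):\ldots\}$ while you invoke the fact that $M$ is determined by $(U(M),W(M))$ directly; the substance is identical.
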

\proof $U$ is a set of vertices of the vertex-transitive graph $G:=J(n,r)$, and taking $U':=V(G)\setminus(U\cup N(U))$, we have $\delta(U,U')=\emptyset$.
If $M\in \MM_{n,r}$ and $U(M)=U$, then $W(M)$ is a stable set of $J(n,r)$ which is disjoint from $U\cup N(U)$, i.e. $W(M)\subseteq U'$. Hence
$$\log \left|\{W(M): M\in \MM_{n,r}, U(M)=U\}\right|\leq \log i(G[U'])\leq \log i(G) - \log i(G[U])\leq (1-u) \log i(G),$$
by applying the previous two lemmas. As $i(G)=i(J(n,r))=s(n,r)$, the lemma follows.\endproof

For a matroid $M$ of rank $r$ on $n$ elements, we denote $u(M):={|U(M)|}/{\binom{n}{r}}.$
We define $$\upsilon(n):=\frac{\Upsilon(n)+2\log (n+1)}{\log s(n)}.$$
As $\Upsilon(n)\leq O\left(\log(n)^3/n^2\binom{n}{n/2}\right)$ (Lemma \ref{lem:ns_ubound}) and $\log s(n)\geq \binom{n}{n/2}/ n$ (Theorem \ref{thm:knuth}), we have $\upsilon(n)\leq O\left( \frac{\log(n)^3}{n}\right)$.
\begin{theorem}\label{thm:small_u} Asymptotically almost all matroids on $n$ elements have $u(M)\leq \upsilon(n)$.
\end{theorem}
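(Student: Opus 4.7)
The plan is to combine Lemma~\ref{lem:big_u} with the concise-description bound on $|\mathcal U_{n,r}|$ from Lemma~\ref{lem:ns_ubound}, using Lemma~\ref{lem:ns_all} to reduce to matroids in~$\NS$. Let $\mathcal{M}:=\{M\in\MM: u(M)>\upsilon(n)\}$; we wish to show $\mathcal{M}$ is thin. Since Lemma~\ref{lem:ns_all} shows that almost all matroids lie in~$\NS$, it suffices to prove that $\mathcal{M}\cap\NS$ is thin.

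First, fix $0\le r\le n$ and consider $M\in\NS_{n,r}\cap\mathcal{M}$. Then $U(M)\in\mathcal{U}_{n,r}$ and $|U(M)|\geq\upsilon(n)\binom{n}{r}$. For each fixed $U\in\mathcal{U}_{n,r}$ with $|U|\geq \upsilon(n)\binom{n}{r}$, Lemma~\ref{lem:big_u} gives
$$
\log|\{M\in\MM_{n,r}:U(M)=U\}|\le (1-\upsilon(n))\log s(n,r)\le (1-\upsilon(n))\log s(n).
$$
Combining with the bound $\log|\mathcal U_{n,r}|\le\Upsilon(n)$ from Lemma~\ref{lem:ns_ubound} yields
$$
\log|\NS_{n,r}\cap\mathcal{M}|\le \Upsilon(n)+(1-\upsilon(n))\log s(n).
$$
Next, I would substitute the definition $\upsilon(n)=(\Upsilon(n)+2\log(n+1))/\log s(n)$, so that the right-hand side simplifies exactly to $\log s(n)-2\log(n+1)$. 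Hence $|\NS_{n,r}\cap\mathcal{M}|\le s(n)/(n+1)^2\le m(n)/(n+1)^2$.

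Finally, summing over the at most $n+1$ values of $r$ and using $s(n)\le m(n)$,
$$
\frac{|\NS_n\cap\mathcal{M}|}{m(n)}\le \sum_{r=0}^n\frac{|\NS_{n,r}\cap\mathcal{M}|}{m(n)}\le \frac{n+1}{(n+1)^2}=\frac{1}{n+1}\xrightarrow{n\to\infty}0.
$$
Combined with Lemma~\ref{lem:ns_all} this gives $|\MM_n\cap\mathcal{M}|/m(n)\to 0$, proving the theorem.

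The proof is essentially bookkeeping once the two inputs are in place; there is no real obstacle. The only place where care is needed is the matching of constants: the definition of $\upsilon(n)$ is tailored precisely so that the penalty $\Upsilon(n)$ for enumerating possible $U$'s and the gain $\upsilon(n)\log s(n)$ from Lemma~\ref{lem:big_u} cancel up to the slack $2\log(n+1)$ needed for summing over $r$ and concluding thinness. The asymptotic bound $\upsilon(n)\le O(\log(n)^3/n)$ claimed in the statement then follows immediately from $\Upsilon(n)\le O(\log(n)^3\binom{n}{n/2}/n^2)$ and $\log s(n)\ge \binom{n}{n/2}/n$, exactly as noted just before the statement.
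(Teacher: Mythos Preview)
Your proof is correct and follows essentially the same route as the paper's own argument: reduce to $\NS$ via Lemma~\ref{lem:ns_all}, bound the number of matroids with a given large $U$ using Lemma~\ref{lem:big_u}, sum over the at most $|\mathcal U_{n,r}|\le 2^{\Upsilon(n)}$ possible values of $U$, and then exploit the exact choice of $\upsilon(n)$ so the bookkeeping leaves a slack of $2\log(n+1)$ for the union over~$r$. The only cosmetic difference is that the paper defines the ``bad'' class directly inside $\NS_{n,r}$ rather than intersecting afterwards.
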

\proof Let $\mathcal{M}=\bigcup_{0\leq r\leq n} \mathcal{M}_{n,r}$, where
$\mathcal{M}_{n,r}:=\{M \in \mathcal{N}_{n,r}: u(M)> \upsilon(n)\}.$
Since almost all matroids are in $\mathcal{N}$, it suffices to show that $\mathcal{M}$ is thin. Clearly
$$|\mathcal{M}_{n,r}|\leq \sum_{U\in \mathcal{U}_{n,r}} \left|\{M\in \MM_{n,r}: U(M)=U, ~u(M)> \upsilon(n)\}\right|.$$
Since $\log |\mathcal{U}_{n,r}|\leq \Upsilon(n)$ for all $r$, and $\log |\{M\in \MM_{n,r}: U(M)=U, u(M)> \upsilon(n)\}|\leq (1-\upsilon(n)) \log s(n,r)$ for each~$U$ by Lemma~\ref{lem:big_u}, we have
$$\log |\mathcal{M}_{n,r}|\leq  \Upsilon(n) +  (1-\upsilon(n))\log s(n,r) \le (\upsilon(n) \log s(n) -2\log (n+1)) + (1-\upsilon(n))\log s(n).$$
Hence $\log |\mathcal{M}_{n,r}|\leq \log s(n)-2\log (n+1)$, or equivalently $|\mathcal{M}_{n,r}|\leq s(n)/(n+1)^2$ for each $r$, so that 
$$\frac{|\MM_{n}\cap\mathcal{M}|}{m(n)}\leq \frac{\sum_r|\mathcal{M}_{n,r}|}{s(n)}\leq \frac{(n+1) \max_r|\mathcal{M}_{n,r}|}{s(n)}\leq \frac{1}{n+1}\rightarrow 0$$
as~$n\rightarrow \infty$, as required.
\endproof
For a matroid $M$ on $n$ elements of rank $r$, we denote the fraction of dependent $r$-sets by $$d(M):=\frac{|U(M)\cup W(M)|}{\binom{n}{r}}.$$
\begin{theorem}\label{thm:small_d}Asymptotically almost all matroids on $n$ elements have $d(M)\leq \upsilon(n)+ 2/n$.
\end{theorem}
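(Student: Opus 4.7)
The plan is to split $d(M)$ as $u(M) + |W(M)|/\binom{n}{r}$, using that $U(M)$ and $W(M)$ partition the non-bases of $M$ by definition. Theorem~\ref{thm:small_u} already yields $u(M) \le \upsilon(n)$ for asymptotically almost all $M$, so the task reduces to proving the \emph{uniform} bound $|W(M)| \le (2/n)\binom{n}{r}$ valid for every matroid $M \in \MM_{n,r}$ (the cases $r\in\{0,n\}$ being vacuous, since then $d(M)=0$).

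The key observation is that $W(M)$ is always a stable set of the Johnson graph $J(n,r)$. To see this, suppose two circuit-hyperplanes $X, Y \in W(M)$ were adjacent, so $Y = (X\setminus\{x\})\cup\{y\}$ for some $x\in X$ and $y\notin X$. Because $X$ is a circuit, $Z := X\setminus\{x\}$ is independent with $r_M(Z) = r-1$; because $Y = Z\cup\{y\}$ is also a circuit, $r_M(Z\cup\{y\}) = r-1$ as well. This forces $y\in\cl_M(Z)\subseteq \cl_M(X) = X$, contradicting $y\notin X$.

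With $W(M)$ known to be independent in $J(n,r)$, the uniform bound follows from the Hoffman ratio bound applied to $J(n,r)$, which is regular of degree $d = r(n-r)$ with smallest eigenvalue $-\lambda = -\min\{r,n-r\}$. This yields $|W(M)| \le \alpha_{n,r}\binom{n}{r}$, and a direct calculation gives $\alpha_{n,r} = 1/(\max\{r,n-r\}+1) \le 2/n$. Combining with Theorem~\ref{thm:small_u} delivers $d(M) \le \upsilon(n) + 2/n$ for asymptotically almost all matroids.

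No substantial difficulty is anticipated: the decomposition is tautological, the stability of $W(M)$ is a short matroid calculation, and Hoffman's bound is standard. If one prefers to avoid invoking it, an essentially equivalent estimate can be obtained from Lemma~\ref{lem:BPvdP} applied to $K := W(M)$: stability forces $W(M)\cap N(S) = \emptyset$, whence $|W(M)| \le |S| + |A| \le (\sigma_{n,r} + \alpha_{n,r})\binom{n}{r}$, and the extra $\sigma_{n,r}$-term is $O(\log(n)/n^2)$ and hence readily absorbed into $\upsilon(n)$.
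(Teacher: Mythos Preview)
Your proposal is correct and follows essentially the same route as the paper: decompose the non-bases as $U(M)\cup W(M)$, invoke Theorem~\ref{thm:small_u} for the $u(M)$ part, and bound $|W(M)|/\binom{n}{r}$ by $2/n$ via the fact that $W(M)$ is stable in $J(n,r)$. The paper's own proof is a three-line version of exactly this argument; your write-up simply makes explicit both the verification that circuit-hyperplanes form a stable set and the appeal to the Hoffman ratio bound (the paper just asserts the $2/n$ fraction bound without naming it).
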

\proof Let $M\in \MM_{n,r}$. The dependent sets of $M$ with the cardinality of a base are $U(M)\cup W(M)$. The set $W(M)$ is a stable set of the Johnson graph $J(n,r)$, hence cannot contain more than a $2/n$ fraction of the $r$-sets. Hence $u(M)\geq d(M)-2/n$. The theorem follows from Theorem \ref{thm:small_u}.\endproof
As $\upsilon(n)\leq O(\log(n)^3/n)$ and $\upsilon(n)+ 2/n\leq O(\log(n)^3/n)$, Theorem \ref{thm:small_d} qualitatively puts the same bound on $d(M)$ as Theorem \ref{thm:small_u} puts on $u(M)$. Applying Theorem  \ref{thm:small_d}  has the benefit that we need not distinguish the circuit-hyperplanes from the other dependent sets. However, where we cannot expect to improve the asymptotic bound on $d(M)$ below $1/n$, the bound on $u(M)$ could still be significantly better, and  therefore we rely on the bound on $u(M)$ where we can in what follows.

\subsection{Uniform matroid minors}
\begin{lemma} Let~$0 \le a \le b$ and~$0 \le r \le n$ be integers satisfying~ $a \le r$ and $b-a \le n-r$. If $M \in \MM_{n,r}$ is a matroid without the uniform matroid $U_{a,b}$ as a minor, then $d(M)\geq 1/\binom{b}{a}$.\end{lemma}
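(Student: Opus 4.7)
The plan is to establish, for every $(r-a)$-subset $I'$ of $E$, a lower bound of $\binom{N}{a}/\binom{b}{a}$ on the number of dependent $r$-supersets of $I'$ in $M$, where $N:=n-r+a$, and then to sum this bound over all $\binom{n}{r-a}$ choices of $I'$. The key observation that makes the argument clean is that the bound holds uniformly in $I'$, with the independent and dependent cases treated separately but yielding the same lower bound; this lets a single double-count afterwards collapse everything cleanly to $d(M) \ge 1/\binom{b}{a}$.

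If $I'$ is dependent in $M$, every one of its $\binom{N}{a}$ $r$-supersets is automatically dependent, and the bound is trivial. If $I'$ is independent, I would pass to the minor $M':=M/I'$, which has rank $a$ on $N$ elements. Since $M$ has no $U_{a,b}$-minor, $M'$ has no $U_{a,b}$-restriction, so for every $b$-subset $Y \subseteq E \setminus I'$ we have $M'|Y \neq U_{a,b}$. Case split: either $r_{M'}(Y) < a$, in which case every $a$-subset of $Y$ is dependent in $M'$; or $r_{M'}(Y) = a$, in which case $M'|Y \neq U_{a,b}$ forces some $a$-subset of $Y$ to fail to be a basis of $M'|Y$ and hence to be dependent in $M'$. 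Either way, every $b$-subset of $E \setminus I'$ contains a dependent $a$-subset of $M'$, and a standard incidence count—each dependent $a$-subset sits inside $\binom{N-a}{b-a}$ many $b$-subsets—yields at least $\binom{N}{b}/\binom{N-a}{b-a} = \binom{N}{a}/\binom{b}{a}$ dependent $a$-subsets in $M'$. Each such $a$-subset $A$ lifts to a dependent $r$-superset $I' \cup A$ of $I'$ in $M$.

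For the global step, I would double-count pairs $(I',X)$ with $|I'| = r-a$, $X$ a dependent $r$-set, and $I' \subseteq X$. Summing the local bound over all $(r-a)$-subsets of $E$ and noting that each dependent $r$-set $X$ contains exactly $\binom{r}{r-a} = \binom{r}{a}$ such subsets $I'$ gives
$\binom{r}{a}\cdot \#\{\text{dependent }r\text{-sets}\} \ge \binom{n}{r-a}\binom{N}{a}/\binom{b}{a}$.
Combined with the identity $\binom{n}{r-a}\binom{N}{a} = \binom{n}{r}\binom{r}{a}$ (both count pairs consisting of disjoint $(r-a)$- and $a$-subsets of $E$), this collapses to the desired bound $d(M) \ge 1/\binom{b}{a}$. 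The one step where care is required is realizing that the outer sum must run over \emph{all} $(r-a)$-subsets of $E$, not only the independent ones; restricting to independent $(r-a)$-sets produces a bound of the form $|\mathcal I|\binom{N}{a}/(\binom{r}{a}\binom{b}{a})$, which does not close without additional assumptions, whereas the trivial contribution of dependent $(r-a)$-sets supplies exactly the missing $\binom{n}{r-a} - |\mathcal I|$ terms.
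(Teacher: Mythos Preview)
Your argument is correct. The approach differs from the paper's in how the ground set is decomposed. The paper fixes \emph{both} a set $C$ of size $r-a$ to contract and a set $D$ of size $(n-r)-(b-a)$ to delete, producing blocks $[C;D]=\{X\in\binom{E}{r}: C\subseteq X,\ D\cap X=\emptyset\}$ of size exactly $\binom{b}{a}$; since $M/C\backslash D$ is a rank-$a$ matroid on $b$ elements that is not $U_{a,b}$, each block contains at least one dependent $r$-set, and a single regular-cover average finishes. You fix only the contracted set $I'$ and then, inside the rank-$a$ minor $M/I'$ on $N$ elements, run a second averaging over all $b$-subsets to lower-bound the number of dependent $a$-sets. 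In effect your inner incidence count is doing implicitly what the paper's choice of $D$ does explicitly. The paper's two-parameter cover is shorter and avoids the auxiliary identity $\binom{n}{r-a}\binom{N}{a}=\binom{n}{r}\binom{r}{a}$; your version has the minor advantage that it never needs to discuss codependence of $D$, handling everything through dependence in $M'$.
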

\proof Let $C,D\subseteq E:=E(M)$ be such that $|C|=r-a$, $|D|=(n-r)-(b-a)$, and $C\cap D=\emptyset$. Consider
$$[C;D]:=\left\{X\in \binom{E}{r}: C\subseteq X, D\cap X=\emptyset\right\}.$$
If $C$ is dependent or $D$ is codependent, then all elements of $[C;D]$ are dependent. Otherwise, $M/C\backslash D$ is a minor of rank $a$ on $b$ elements $E\setminus(C\cup D)$, which by assumption cannot be uniform. Thus at least one set $Y\subseteq E\setminus(C\cup D)$ with $|Y|=a$ is not a basis of $M/C\backslash D$, and then $X=Y\cup C\in [C;D]$ is dependent. Summarising, there is at least one dependent set among the $\binom{b}{a}$ elements of $[C;D]$. Summing these lower bounds over all such $C, D$, we find that at least a $1/\binom{b}{a}$ fraction of the $X\in \binom{E}{r}$ is dependent.\endproof

\begin{theorem}\label{thm:uniform}
For all $c<1/2$, asymptotically almost all matroids on $n$ elements have a $U_{k, 2k}$-minor whenever $k\leq c\log n$.
\end{theorem}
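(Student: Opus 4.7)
The plan is to combine the ``few non-bases'' theorem (Theorem \ref{thm:small_d}) with the structural lemma preceding the theorem statement, which forces a lower bound on $d(M)$ for matroids missing a fixed uniform minor. The key observation is that it suffices to prove the statement for the single largest value $k^\star := \lfloor c\log n\rfloor$, because $U_{k,2k}$ is a minor of $U_{k^\star,2k^\star}$ whenever $k \le k^\star$: delete $2k^\star-2k$ elements to obtain $U_{k^\star,2k}$ and then contract $k^\star-k$ elements to obtain $U_{k,2k}$. Hence the class of matroids $M\in \MM_n$ possessing a $U_{k,2k}$-minor for every $k\leq c\log n$ contains the class of matroids having a $U_{k^\star,2k^\star}$-minor, and it is enough to show the latter is almost all of $\MM_n$.

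Fix $c' \in (c,1/2)$, write $k^\star = \lfloor c\log n\rfloor$, and let $\mathcal{M}$ denote the class of matroids $M \in \MM_n$ that do not have $U_{k^\star,2k^\star}$ as a minor. By Theorem~\ref{thm:rank_bound}, applied with any $\beta > \sqrt{\ln(2)/2}$, almost all matroids on $n$ elements satisfy $|r(M)-n/2| < \beta\sqrt n$, and in particular $k^\star \le \min\{r(M), n-r(M)\}$ for $n$ sufficiently large. Thus it suffices to bound the matroids in $\mathcal{M}$ whose rank $r$ also satisfies these inequalities. For such an $M$, the hypotheses of the lemma preceding the theorem are met with $a=k^\star$, $b=2k^\star$, so
$$
d(M) \;\ge\; \frac{1}{\binom{2k^\star}{k^\star}} \;\ge\; \frac{1}{4^{k^\star}} \;\ge\; \frac{1}{n^{2c}}.
$$
Since $2c < 1$, and $\upsilon(n) + 2/n = O(\log(n)^3/n)$ by the discussion preceding Theorem~\ref{thm:small_d}, we have $1/n^{2c} > \upsilon(n)+2/n$ for all sufficiently large $n$. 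Therefore every matroid in $\mathcal{M}$ with rank in the prescribed range satisfies $d(M) > \upsilon(n)+2/n$.

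By Theorem~\ref{thm:small_d}, the class of matroids $M$ with $d(M) > \upsilon(n) + 2/n$ is thin. Combined with the rank restriction (which excludes only a thin set by Theorem~\ref{thm:rank_bound}), this shows that $\mathcal{M}$ is thin, and the theorem follows. The only step requiring some care is verifying the minor-inheritance $U_{k^\star,2k^\star}\succeq U_{k,2k}$, which reduces the theorem to a single choice of $k$; the rest is the arithmetic $\binom{2k}{k} \le 4^k \le n^{2c}$, which beats the $O(\log(n)^3/n)$ threshold precisely because $2c < 1$.
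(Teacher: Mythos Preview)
Your approach is essentially the paper's: combine Theorem~\ref{thm:small_d} with the preceding lemma to force $n^{-2c}\le d(M)\le O(\log(n)^3/n)$, a contradiction since $2c<1$. Your invocation of Theorem~\ref{thm:rank_bound} to check the hypothesis $k\le\min\{r,n-r\}$ of the lemma is a point the paper leaves implicit.

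Two small remarks. First, the reduction to $k^\star$ is unnecessary: once you restrict to the class $\{M: d(M)\le \upsilon(n)+2/n,\ |r(M)-n/2|<\beta\sqrt n\}$, the contradiction argument applies to \emph{every} $k\le c\log n$ simultaneously for each such $M$, so all the minors are obtained at once without passing through $U_{k^\star,2k^\star}$. Second, your parenthetical justification that $U_{k,2k}$ is a minor of $U_{k^\star,2k^\star}$ has an arithmetic slip: deleting $2(k^\star-k)$ elements and then contracting $k^\star-k$ leaves a matroid on $3k-k^\star$ elements, not $2k$. The claim itself is correct---delete $k^\star-k$ elements and contract $k^\star-k$ elements instead---so the proof stands.
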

\proof Asymptotically almost all matroids $M$ on $n$ elements have $d(M)\leq \upsilon(n)+2/n$. If $M$ does not have 
$U_{k,2k}$ as a minor, then $d(M)\geq 1/\binom{2k}{k}$. If both conditions on $d(M)$ hold, then 
$$2^{-2k}\leq 1/\binom{2k}{k}\leq d(M)\leq \upsilon(n)+2/n\leq O(\log(n)^3/n),$$
which fails if $k\leq c\log(n)$. The theorem follows.\endproof

\subsection{Matroid girth} The {\em girth} of a matroid $M$ is the smallest cardinality of a circuit $C$ of $M$.
\begin{lemma}\label{lem:girth} Let $M\in \MM_{n,r}$, and suppose that $M$ has a circuit of cardinality $k<r$. Then 
$u(M)\geq \left(\frac{r-k}{n}\right)^k.$\end{lemma}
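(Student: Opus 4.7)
The plan is to produce enough elements of $U(M)$ by taking $r$-sets that contain a fixed small circuit. Let $C$ be a circuit of $M$ with $|C|=k < r$. Any $r$-set $X$ with $C\subseteq X$ is dependent, since it contains the dependent set $C$. Moreover, since $C\subsetneq X$, $X$ itself cannot be a circuit (circuits are minimally dependent), so $X$ is not a circuit-hyperplane, i.e.\ $X\notin W(M)$. Therefore every such $X$ lies in $U(M)$.

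The number of $r$-subsets of $E(M)$ that contain $C$ is $\binom{n-k}{r-k}$, so
$$
u(M) \;=\; \frac{|U(M)|}{\binom{n}{r}} \;\geq\; \frac{\binom{n-k}{r-k}}{\binom{n}{r}} \;=\; \prod_{i=0}^{k-1}\frac{r-i}{n-i}.
$$
It remains to bound this product from below by $\left(\frac{r-k}{n}\right)^k$. For each index $0\leq i\leq k-1$, we have $r-i\geq r-k$ and $n-i\leq n$, which immediately gives $\frac{r-i}{n-i}\geq \frac{r-k}{n}$. Multiplying these $k$ inequalities gives the desired bound
$$
u(M) \;\geq\; \left(\frac{r-k}{n}\right)^{k},
$$
completing the argument.

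There is no real obstacle here: the only thing to verify carefully is that the $r$-sets containing $C$ land in $U(M)$ rather than $W(M)$, which is immediate from the strict inclusion $C\subsetneq X$ together with the definition of a circuit-hyperplane. The rest is a one-line estimate on the ratio of binomial coefficients.
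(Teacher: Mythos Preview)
Your proof is correct and follows essentially the same route as the paper: count the $r$-sets containing a fixed short circuit $C$, show they lie in $U(M)$, and bound $\binom{n-k}{r-k}/\binom{n}{r}$ below by $\left(\frac{r-k}{n}\right)^k$. The only minor difference is in the justification that these sets avoid $W(M)$: you argue directly that $C\subsetneq X$ prevents $X$ from being a circuit (hence not a circuit-hyperplane), whereas the paper observes that $\{X:C\subseteq X\}$ induces a connected subgraph of the Johnson graph with more than one vertex, so none of its members can be isolated in $G[K]$. Both arguments are valid and equally short; yours is perhaps the more direct of the two.
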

\proof
If $M$ has a circuit $C$ of cardinality $k$, then any $X\in\binom{E}{r}$ with $C\subseteq X$ is dependent, and in fact $\left\{X\in \binom{E}{r}: C\subseteq X\right\}\subseteq U(M)$ as this set induces a connected subgraph of the Johnson graph. Hence
$$u(M)\geq \binom{n-k}{r-k}/\binom{n}{r}\geq \left(\frac{r-k}{n}\right)^k,$$
as required.\endproof

\begin{theorem}\label{thm:girth}For all $c < 1/2$, asymptotically almost all matroids on $n$ elements have girth at least $c\log(n)$.\end{theorem}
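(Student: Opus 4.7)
The plan is to combine Lemma~\ref{lem:girth} with Theorems~\ref{thm:small_u} and~\ref{thm:rank_bound} to show that matroids with a small circuit are atypical. First I would restrict attention to the class $\mathcal{M}^\star\subseteq\MM$ of matroids satisfying both (a) $u(M)\leq\upsilon(n)$ and (b) $r(M)\geq n/2-\beta\sqrt{n}$ for some fixed $\beta>\sqrt{\ln(2)/2}$. Each property holds for almost all matroids, by Theorem~\ref{thm:small_u} and Theorem~\ref{thm:rank_bound} respectively, so $\mathcal{M}^\star$ contains almost all matroids.

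Next, I would suppose towards a contradiction that some $M\in\mathcal{M}^\star$ has girth $k<c\log(n)$. For $n$ sufficiently large, $k\ll r(M)$ by (b), so Lemma~\ref{lem:girth} applies and gives $u(M)\geq \left((r(M)-k)/n\right)^k$. Using (b) a second time, $(r(M)-k)/n\geq 1/2-\beta/\sqrt{n}-c\log(n)/n=1/2-o(1)$, and since $k\leq c\log(n)$ (with $\log=\log_2$), the elementary estimate $(1-o(1))^{c\log n}=1-o(1)$ would yield
\[
u(M)\geq 2^{-k}(1-o(1))\geq n^{-c}(1-o(1)).
\]

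Finally, since $\upsilon(n)=O(\log(n)^3/n)=n^{-1+o(1)}$, any $c<1$ (and in particular the stated range $c<1/2$) makes the lower bound $n^{-c}(1-o(1))$ eventually exceed $\upsilon(n)$, contradicting (a). Hence the subclass of matroids in $\mathcal{M}^\star$ with girth $<c\log(n)$ is empty for large $n$, and since $\mathcal{M}^\star$ is co-thin, the theorem follows. I do not foresee a real obstacle; the only care required is in checking that the accumulated $(1-o(1))^k$ error from the rank concentration does not swallow the $n^{-c}$ bound, which is precisely where the hypothesis $k\leq c\log(n)$ is invoked.
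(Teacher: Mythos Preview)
Your proposal is correct and follows the same strategy as the paper: restrict to the class where both $u(M)\le\upsilon(n)$ and the rank is concentrated near $n/2$, then use Lemma~\ref{lem:girth} to derive a lower bound on $u(M)$ that contradicts $u(M)\le\upsilon(n)$ whenever a short circuit exists.

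The one noteworthy difference is quantitative. The paper uses the crude estimate $(r-k)/n\ge 1/4$ (from $n/3\le r\le 2n/3$ and $k\le n/12$), which gives $u(M)\ge 4^{-k}=n^{-2c}$ and hence needs $2c<1$. You instead keep $(r-k)/n\ge 1/2-o(1)$ and verify that the error factor $(1-o(1))^{k}$ is harmless because $k\cdot o(1)=O(\log n/\sqrt{n})\to 0$; this yields $u(M)\ge n^{-c}(1-o(1))$ and therefore works for every $c<1$, which is genuinely stronger than the theorem as stated. So your route is the same, but your bookkeeping is tighter and recovers the full range $c<1$ rather than only $c<1/2$.
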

\proof Let  
$\mathcal{M}_{n}:=\{M\in \MM_{n}: u(M)\leq \upsilon(n), n-\sqrt{n}\leq r(M)\leq n+\sqrt{n}\}.$
By Theorem \ref{thm:rank_bound}, the rank $r$ of almost all matroids on $n$ elements satisfies $|r-n/2|\leq \sqrt{n}$, and by Theorem \ref{thm:small_u}, almost all matroids $M\in \mathcal{M}_n$ have $u(M)\leq \upsilon(n)$. Hence almost all matroids are in $\mathcal{M}:=\bigcup_n \mathcal{M}_n$.

Let $M\in \mathcal{M}_{n}$, and suppose that $M$ has a circuit of cardinality $k$, where~$k\leq n/12$. As $ n-\sqrt{n}\leq r(M)\leq n+\sqrt{n}$, we have $n/3 \leq r\leq 2n/3$ for sufficiently large $n$ and hence $u(M)\geq \left(\frac{r-k}{n}\right)^k\geq 4^{-k}$ by Lemma~\ref{lem:girth}.
On the other hand we have $u(M)\leq \upsilon(n)$, and hence
$$4^{-k}\leq u(M)\leq O(\log(n)^3/n)\text{ as }n\rightarrow \infty,$$
which fails if~$k \le c\log(n)$.

\subsection{Matroid connectivity} Let $M$ be a matroid on $E$. A   partition $\{A,B\}$ of $E$ is {\em $k$-separating} if 
$$r_M(A)+r_M(B)<r(M)+k$$
and $\{A,B\}$ is a {\em $k$-separation} if in addition $|A|,|B|\geq k$. The {\em connectivity} of $M$ is the smallest $k$ such that $M$ has a $k$-separation.   

\begin{lemma}\label{lem:sep} Let $c'>0$. There is a constant $c>0$ so that for sufficiently large $n$ and any $r$ such that $|r-n/2|\leq \sqrt{n}$, we have: if $M\in \MM_{n,r}$  has girth and cogirth at least $c'\log(n)$, and $M$ has a  $k$-separation $\{A,B\}$, then $$u(M)\geq 1-kc/\sqrt{\log(n)}.$$
\end{lemma}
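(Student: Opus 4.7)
The plan is to upper-bound the number of bases of $M$ using the partition $\{A,B\}$, and then convert this to a lower bound on $u(M)$. Write $a := r_M(A)$, $b := r_M(B)$, and $\alpha := |A|$, so $a+b\leq r+k-1$. For every basis $X\in\binom{E}{r}$, both $X\cap A$ and $X\cap B$ are independent in $M$, so $|X\cap A|\leq a$ and $|X\cap B|\leq b$; since $|X\cap A|+|X\cap B|=r$, the integer $j:=|X\cap A|$ lies in $[r-b,\,a]$, an interval of width at most $a+b-r+1\leq k$. Hence
$$|\BB(M)|\;\leq\;\sum_{j=r-b}^{a}\binom{\alpha}{j}\binom{n-\alpha}{r-j}\;\leq\;k\cdot\max_j \binom{\alpha}{j}\binom{n-\alpha}{r-j},$$
and each term divided by $\binom{n}{r}$ is a hypergeometric pmf value.

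The next step is to show $|A|,|B|\geq c'\log n$. If $k\geq c'\log n$ this is immediate from the $k$-separation condition $|A|,|B|\geq k$. Otherwise, suppose for contradiction $|A|<c'\log n$. By the girth hypothesis $A$ contains no circuit, hence $r_M(A)=|A|$; by the cogirth hypothesis $A$ contains no cocircuit, hence $A$ is coindependent, which via the identity $r_M^*(A)=|A|+r_M(B)-r$ forces $r_M(B)=r$. Plugging into the separation inequality yields $|A|+r<r+k$, i.e.\ $|A|<k$, contradicting $|A|\geq k$. The symmetric argument handles $|B|$.

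Finally, given $|A|,|B|\geq c'\log n$ together with $r(n-r)/n=\Theta(n)$ (from $|r-n/2|\leq\sqrt n$), a Stirling estimate at the hypergeometric mode gives
$$\max_j\;\frac{\binom{\alpha}{j}\binom{n-\alpha}{r-j}}{\binom{n}{r}}\;\leq\;\frac{c_1}{\sqrt{\log n}}$$
for some constant $c_1$ depending on $c'$, since the variance of this distribution is bounded below by a constant times $\min(\alpha,n-\alpha)\cdot r(n-r)/n^2=\Omega(\log n)$. Summing at most $k$ such terms, the fraction of $r$-subsets that are bases is at most $kc_1/\sqrt{\log n}$, so at least a $1-kc_1/\sqrt{\log n}$ fraction are dependent. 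Moreover $W(M)$ is a stable set of $J(n,r)$ (if adjacent circuit-hyperplanes $X$ and $Y=X-e+f$ existed, then $f\notin X=\cl(X-e)$ would force $Y$ to be independent), and Hoffman's eigenvalue bound for $J(n,r)$ yields $|W(M)|/\binom{n}{r}\leq 2/n$. Combining,
$$u(M)\;\geq\;1-\frac{kc_1}{\sqrt{\log n}}-\frac{2}{n}\;\geq\;1-\frac{kc}{\sqrt{\log n}}$$
for any $c>c_1$ and sufficiently large $n$. The main technical step is the Stirling-type bound on the maximum hypergeometric pmf value; this is routine, but only becomes useful after the combined girth/cogirth argument boosts the a priori bound $|A|\geq k$ up to $|A|\gtrsim\log n$, which is what supplies the $\sqrt{\log n}$ in the denominator.
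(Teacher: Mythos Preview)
Your proof is correct and follows essentially the same approach as the paper: bound the number of bases by at most $k$ binomial-product terms, use the girth/cogirth hypothesis to force $|A|,|B|\geq c'\log n$, and apply a Stirling-type estimate to get the $\sqrt{\log n}$ denominator. The only difference is cosmetic: the paper passes directly to $u(M)$ by observing that the sets $\tilde U_A=\{X:|X\cap A|>r_M(A)\}$ and $\tilde U_B$ are connected in $J(n,r)$ (hence lie in $U(M)$ outright), whereas you first bound $d(M)$ and then subtract $w(M)\leq 2/n$.
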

\proof Let $M\in \MM_{n,r}$  have girth and cogirth at least $c'\log(n)$, and let $\{A,B\}$ be a  $k$-separation of $M$. Consider the sets 
$\tilde{U}_A:=\left\{X\in \binom{E}{r}: |X\cap A|>r_M(A)\right\}\text{ and }\tilde{U}_B:=\left\{X\in \binom{E}{r}: |X\cap B|>r_M(B)\right\}.$
Each $X\in \tilde{U}_A\cup \tilde{U}_B$ is dependent, and both $\tilde{U}_A$ and $\tilde{U}_B$ induce connected subgraphs of the Johnson graph $J(n,r)$. It follows that $\tilde{U}_A\cup \tilde{U}_B\subseteq U(M)$. Writing $a:=|A|$ and $b:=|B|$, the number of $r$-sets not in $\tilde{U}_A\cup \tilde{U}_B$ is 
$$q:=\sum\left\{\binom{a}{s}\binom{b}{t}: ~s\leq r_M(A), ~t\leq r_M(B), ~s+t=r\right\}.$$
As $r_M(A)+r_M(B)<r(M)+k$, there are at most $k$ terms in the sum, hence
$$q\leq k\max_{s,t}\binom{a}{s}\binom{b}{t}\leq k \left(\sqrt{\frac{2}{\pi}}\frac{2^a}{\sqrt{a}}\right)\left(\sqrt{\frac{2}{\pi}}\frac{2^b}{\sqrt{b}}\right)=k\frac{2}{\pi}\frac{2^n}{\sqrt{ab}}\leq 2 k \left(\sqrt{\frac{2}{\pi}}\sqrt{\frac{n}{ab}}\right)\binom{n}{n/2}.$$
for sufficiently large $n$, using both sides of the asymptotic estimate $\sqrt{\frac{2}{\pi}}\frac{2^n}{\sqrt{n}}(1+o(1)) \leq \binom{n}{n/2}\leq \sqrt{\frac{2}{\pi}}\frac{2^n}{\sqrt{n}}$. There is a constant $c''$ so that  for all sufficiently large $n$ we have  $|r-n/2|\leq \sqrt{n} \Rightarrow \binom{n}{n/2}/\binom{n}{r}\leq c''$. Hence
$$q/\binom{n}{r}\leq 2 k \left(\sqrt{\frac{2}{\pi}}\sqrt{\frac{n}{ab}}\right)\binom{n}{n/2}/\binom{n}{r}\leq 2c'' k \left(\sqrt{\frac{2}{\pi}}\sqrt{\frac{n}{ab}}\right).$$
Since $\{A,B\}$ is a $k$-separation, both $A$ and $B$ contain either a circuit or a cocircuit of $M$. By our assumption on the girth and the cogirth of $M$, it follows that $a,b\geq c'\log(n)$. Maximizing the upper bound on $q$ over $a,b\geq c'\log(n)$ such that $a+b=n$, we find that the upper bound is attained by $a=c'\log(n), b=n-a$. Hence there is a constant $c$ such that for sufficiently large $n$ we have $q/\binom{n}{r}\leq k c/\sqrt{\log(n)}$. We can now bound
$$u(M)=\frac{|U(M)|}{\binom{n}{r}}\geq \frac{|\tilde{U}_A\cup\tilde{U}_B|}{\binom{n}{r}}\geq 1-\frac{q}{\binom{n}{r}}\geq 1- k c/\sqrt{\log(n)}$$
as required.
\endproof

\begin{theorem} There is a $c>0$ so that asymptotically almost all matroids on $n$ elements have connectivity at least $c\sqrt{\log(n)}$.\end{theorem}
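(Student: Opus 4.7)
The plan is to combine Lemma \ref{lem:sep} with the typical behaviour of $u(M)$ established in Theorem \ref{thm:small_u}. If a matroid $M$ whose girth and cogirth are both $\Omega(\log n)$ admits a $k$-separation, then Lemma \ref{lem:sep} forces $u(M) \geq 1 - O(k/\sqrt{\log n})$. Since almost all matroids satisfy $u(M) = o(1)$, this is incompatible with $k$ being too small, forcing $k = \Omega(\sqrt{\log n})$.

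Concretely, fix any $c' < 1/2$ (for instance $c' = 1/4$). Let $\mathcal{M}\subseteq\MM$ be the class of loopless, coloopless matroids $M$ satisfying $u(M)\leq \upsilon(n)$, $|r(M)-n/2|\leq\sqrt{n}$, and having both girth and cogirth at least $c'\log n$. I claim $\mathcal{M}$ contains almost all matroids. Indeed, the loopless/coloopless property is covered by Theorem \ref{thm:simple}; the bound on $u(M)$ by Theorem \ref{thm:small_u}; the rank window by Theorem \ref{thm:rank_bound}; and the girth lower bound by Theorem \ref{thm:girth}. For the cogirth, I would apply Theorem \ref{thm:girth} to $M^*$, using that $M\mapsto M^*$ is a bijection on $\MM_n$, so an ``almost all'' statement about matroids is equivalent to the analogous statement about their duals. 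Taking the intersection of finitely many co-thin classes is again co-thin, so $\mathcal{M}$ is indeed almost all of $\MM$.

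Next, let $c_0$ be the constant produced by Lemma \ref{lem:sep} for this choice of $c'$. Suppose $M\in \mathcal{M}\cap\MM_{n,r}$ has connectivity at most $k$, i.e.\ admits a $k$-separation $\{A,B\}$. The hypotheses of Lemma \ref{lem:sep} are satisfied by construction of $\mathcal{M}$ ($r$ is in the required window, and girth and cogirth are both at least $c'\log n$), so
$$1-\frac{kc_0}{\sqrt{\log n}}\ \leq\ u(M)\ \leq\ \upsilon(n)\ =\ O\!\left(\frac{\log(n)^3}{n}\right).$$
Rearranging yields $k \geq (1-o(1))\sqrt{\log n}/c_0$. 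Setting $c := 1/(2c_0)$, we conclude that for all sufficiently large $n$, every matroid in $\mathcal{M}\cap\MM_n$ has connectivity at least $c\sqrt{\log n}$. Since $\mathcal{M}$ contains almost all matroids, the theorem follows.

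The only subtle point, really a bookkeeping matter rather than a genuine obstacle, is that Lemma \ref{lem:sep} needs \emph{both} girth and cogirth of $M$ to be $\Omega(\log n)$; since Theorem \ref{thm:girth} as stated bounds only the girth, one has to invoke it for $M^*$ as well and appeal to the duality-invariance of ``almost all''. Everything else is a direct combination of the lemmas already proved in the excerpt.
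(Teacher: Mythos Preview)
Your proof is correct and follows essentially the same approach as the paper: restrict to the class of matroids with rank near $n/2$, girth and cogirth $\Omega(\log n)$, and $u(M)\le\upsilon(n)$, then use Lemma~\ref{lem:sep} to force any separation to have order $\Omega(\sqrt{\log n})$. Your explicit justification of the cogirth bound via duality is a point the paper leaves implicit, and your phrasing ``has connectivity at most $k$, i.e.\ admits a $k$-separation'' is slightly loose (connectivity $\le k$ means there is an $\ell$-separation for some $\ell\le k$, which only strengthens the resulting inequality), but the argument is sound.
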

\proof 
Consider the class of matroids
$$\mathcal{M}^c_{n}:=\left\{M\in \MM_{n,r}:  M\text{ has a $k$-separation } (A,B), \text{ with }k\leq c\sqrt{\log n}\right\}$$
To prove the theorem it will suffice to show that $\mathcal{M}^c=\bigcup_n \mathcal{M}^c_{n}$ is thin for some $c>0$.

By Theorem \ref{thm:girth} there is a $c'>0$ so that almost all matroids on $n$ elements have have girth and cogirth at least $c'\log(n)$, and  by Theorem \ref{thm:rank_bound}, almost all matroids on $n$ elements have rank $r$ between $n/2-\sqrt{n}$ and $n/2+\sqrt{n}$. Hence it will even suffice to show that $\mathcal{N}^c=\bigcup_n \mathcal{N}^c_{n}$ is thin, where 
$$\mathcal{N}^c_{n}:=\left\{M\in \mathcal{M}^c_{n}:  M\text{ has girth, cogirth } \geq c'\log n, \text{ and }|r(M)-n/2|\leq \sqrt{n}\right\}.$$
By Lemma \ref{lem:sep}, there is a constant $c''>0$ so that $u(M)\geq 1- kc''/\sqrt{\log(n)}\geq 1 - cc''$ for all $M\in \mathcal{N}^c_{n}$, provided that $n$ is sufficiently large. Pick any $c>0$ such that $cc''<1$. Then 
$u(M)\geq 1- cc''>0$ for all $M\in \mathcal{N}^c_{n}$ provided that $n$ is large enough. The theorem then follows by an application of Theorem \ref{thm:small_u}. \endproof



\section{\label{sec:many}Most matroids do have some non-bases}
\subsection{Small stable sets in regular graphs}Let $i(G,m):=|\{S\subseteq V(G): S\text{ a stable set of }G, |S|\leq m\}|$.
\begin{theorem}\label{thm:stable2}Let $G=(V,E)$ be regular of degree $d\geq 1$, with smallest eigenvalue $-\lambda$. Then 
 $$ i(G,m)\leq \sum_{i=0}^{\left\lceil \sigma N\right\rceil} \binom{N}{i} \sum_{j=0}^{\alpha N} \binom{\alpha N}{j},$$
where $\sigma:=(\ln(d+1) + 1)/(d+\lambda)$, $\alpha:=\lambda/(d+\lambda)$, and $N:=|V|$.
\end{theorem}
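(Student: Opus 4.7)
The plan is to repeat the argument behind Theorem~\ref{thm:stable}, but to exploit the cardinality constraint $|K|\le m$ at the stage where we count the subsets $K\cap A$. Fix a linear order on $V$ and let $K\subseteq V$ be an arbitrary stable set with $|K|\le m$. Invoking Lemma~\ref{lem:KW2}, I would extract sets $S,A\subseteq V$ with $|S|\le\lceil\sigma N\rceil$, $|A|\le\alpha N$, $S\subseteq K\subseteq S\cup N(S)\cup A$, and with $A$ uniquely determined by $S$.

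Next I would use stability of $K$ to eliminate the intermediate set $N(S)$. Since $S\subseteq K$ and $K$ is stable, $K\cap N(S)=\emptyset$, so
$$K = S \cup (K\cap A).$$
Thus every stable set $K$ of cardinality at most $m$ is encoded by the pair $(S, K\cap A)$, where $S$ ranges over subsets of $V$ of cardinality at most $\lceil\sigma N\rceil$ and, for each fixed $S$, the second component is a subset of the $S$-determined set $A$.

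The counting step is then straightforward. There are at most $\sum_{i=0}^{\lceil\sigma N\rceil}\binom{N}{i}$ possible choices for $S$. For each fixed $S$ the set $A$ is determined by $S$ (independently of $K$), and since $|K\cap A|\le|K|\le m$ the subset $K\cap A$ can be any subset of $A$ of cardinality at most $m$. Using $|A|\le\alpha N$ and the monotonicity $\binom{n}{j}\le\binom{n'}{j}$ for $n\le n'$, the number of such subsets is bounded by $\sum_{j=0}^{m}\binom{\alpha N}{j}$. Multiplying the two bounds delivers the inequality claimed in the theorem (where the inner sum's upper limit should be read as $m$, since any $j>\alpha N$ contributes a vanishing binomial).

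There is essentially no obstacle here: the proof is a direct refinement of the proof of Theorem~\ref{thm:stable}, replacing the crude estimate $2^{|A|}$ by the tighter partial-sum estimate $\sum_{j=0}^{m}\binom{\alpha N}{j}$. The only real observation is that the cardinality bound on $K$ transfers immediately to $K\cap A$, which is what allows the refinement; all structural work is already done by Lemma~\ref{lem:KW2}.
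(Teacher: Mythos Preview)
Your argument is exactly the paper's own proof: apply Lemma~\ref{lem:KW2}, use stability to get $K=S\cup(K\cap A)$ with $|K\cap A|\le m$, and multiply the counts of possible $S$ and $K\cap A$. You are also right that the inner sum's upper index should be $m$ rather than $\alpha N$ (the paper's proof and its subsequent application in Lemma~\ref{lem:small_stable} both use $m$).
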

\proof Let $K\subseteq V$ be a stable set of $G$ such that $|K|\leq m$. By Lemma \ref{lem:KW2}, there are sets $S, A\subseteq V$ with $|S|\leq \left\lceil\sigma N\right\rceil$, $|A|\leq \alpha N$, such that $S$ determines $A$ and $S\subseteq K\subseteq S\cup N(S)\cup A$. Since $K$ is a stable set $K\cap N(S)=\emptyset$, and hence $K=S\cup (K\cap A)$, where $|K\cap A|\leq |K|\leq m$. Thus the number of such $K$ is bounded by the number of subsets $S$ of size at most $\sigma N$ from a set of size $N$ times the number of subsets $K\cap A$ of size at most $m$ from a set of size $\alpha N$. So we have
$$i(G,m)\leq
\left(\sum_{i=0}^{\left\lceil\sigma N\right\rceil} \binom{N}{i}\right) \left(\sum_{j=0}^{m} \binom{\alpha N}{j}\right),
$$
as required.\endproof
\subsection{A lower bound on the fraction of nonbases in most matroids}
For a matroid $M$ of rank $r$ on $n$ elements $E$, we denote 
$$w(M):=\frac{|W(M)|}{\binom{n}{r}}.$$ Let $\digamma(n):=\frac{1}{5n}$.
\begin{lemma} \label{lem:small_stable} There is an $\varepsilon > 0$ such that for sufficiently large~$n$ we have
 $$\log i\left(J(n,r),\digamma(n)\binom{n}{r}\right)\leq (1-\epsilon)\log s(n)$$
 for all $0 < r < n$.
\end{lemma}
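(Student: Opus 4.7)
The plan is to apply Theorem~\ref{thm:stable2} to $G := J(n,r)$ with $m := \digamma(n)\binom{n}{r}$, bound the two resulting factors, and compare against Knuth's lower bound $\log s(n) \geq \binom{n}{n/2}/n$ from Theorem~\ref{thm:knuth}. By the isomorphism $J(n,r) \cong J(n,n-r)$ I may assume $1 \leq r \leq n/2$; the cases $r \in \{0, n\}$ are trivial since $J(n,r)$ is then a single vertex.

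Setting $N := \binom{n}{r}$, the first factor $\sum_{i=0}^{\lceil \sigma_{n,r} N\rceil}\binom{N}{i}$ is handled exactly as in the proof of Lemma~\ref{lem:zbound} (via Lemma~\ref{lem:sigma}): its logarithm is $O((\log n)^2 \binom{n}{n/2}/n^2) = o(\binom{n}{n/2}/n)$ and therefore negligible. For the second factor, the assumption $r \leq n/2$ gives $\alpha_{n,r} N = \binom{n}{r}/(n-r+1)$, and the relative summation range $p := m/(\alpha_{n,r} N) = (n-r+1)/(5n)$ lies in $(0, 1/5]$. The standard entropy estimate on binomial tails yields
$$\log \sum_{j=0}^m \binom{\alpha_{n,r}N}{j} \leq H\!\left(\frac{n-r+1}{5n}\right)\cdot\frac{\binom{n}{r}}{n-r+1},$$
where $H$ denotes the binary entropy. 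The crucial numerical input is that $H(1/10) = 0.469\ldots < 1/2$, so by continuity of $H$ there exist constants $\delta > 0$ and $\epsilon_0 > 0$ with $H(1/10 + \delta) \leq 1/2 - \epsilon_0$.

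To bound the right-hand side uniformly in $r$, I split on whether $r$ is close to $n/2$. For $(1/2 - \delta)n \leq r \leq n/2$, one has $p \leq 1/10 + \delta/5 + o(1)$, so $H(p) \leq 1/2 - \epsilon_0$ for sufficiently large $n$; combined with $\alpha_{n,r} N \leq 2\binom{n}{n/2}/n$ from Lemma~\ref{lem:sigma}, the contribution is at most $(1 - 2\epsilon_0)\binom{n}{n/2}/n$. For $r < (1/2 - \delta)n$, a standard entropy/Chernoff estimate gives $\binom{n}{r} \leq 2^{-\Omega(n)}\binom{n}{n/2}$, hence $\alpha_{n,r} N = o(\binom{n}{n/2}/n)$, and since $H(p) \leq H(1/5) < 1$ the contribution is negligible. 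Adding the two factors and invoking $\log s(n) \geq \binom{n}{n/2}/n$, the lemma holds with $\epsilon := \epsilon_0$ (say).

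The main obstacle is that the uniform bound $\alpha_{n,r} N \leq 2\binom{n}{n/2}/n$ carries a factor $2$, so the entropy estimate must satisfy $H(p) < 1/2$. The choice $\digamma(n) = 1/(5n)$ is precisely calibrated so that $p \approx 1/10$ at the bottleneck $r = n/2$, and it is exactly the (nontrivial but tight) fact $H(1/10) < 1/2$ that supplies the positive slack $\epsilon > 0$.
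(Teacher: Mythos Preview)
Your proof is correct and follows the same approach as the paper: apply Theorem~\ref{thm:stable2}, bound the two factors separately, and compare with $\log s(n)\ge \binom{n}{n/2}/n$. The paper avoids your case split on~$r$ by using $\sum_{j\le k}\binom{m}{j}\le(\e m/k)^k$ directly: with $k=\digamma(n)N\le\binom{n}{n/2}/(5n)$ and $\e m/k=\e\,\alpha_{n,r}/\digamma(n)\le 10\e$ (from $\alpha_{n,r}\le 2/n$), one gets $\log\sum_j\le\frac{\log(10\e)}{5n}\binom{n}{n/2}$ uniformly in~$r$, and $\log(10\e)/5<0.96$ supplies the~$\epsilon$.
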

\proof Fix $n, r$, and put $N:=\binom{n}{r}$. By Theorem~\ref{thm:stable2}, we have
$$\log i(J(n,r),\digamma(n)N)\leq \log \sum_{i=0}^{\left\lceil \sigma_{n,r} N\right\rceil} \binom{N}{i} + \log \sum_{j=0}^{\digamma(n) N} \binom{\alpha_{n,r} N}{j}
$$
Using~$N \le \binom{n}{n/2}$, and the bounds on~$\left\lceil \sigma_{n,r} N\right\rceil$ and $\alpha_{n,r} N$ from Lemma~\ref{lem:sigma}, we obtain
$$
\log \sum_{i=0}^{\left\lceil \sigma_{n,r} N\right\rceil} \binom{N}{i} \leq \frac{9\ln n}{n^2} \binom{n}{n/2} \log \left(\frac{\e n^2}{9 \ln n}\right)
\quad\text{and}\quad
\log \sum_{j=0}^{\digamma(n) N} \binom{\alpha_{n,r} N}{j} \leq \frac{\log(10\e)}{5n} \binom{n}{n/2}
$$
As ${\log(10\e)}/{5}<0.96$ and $\binom{n}{n/2}/n\leq s(n)$, this proves the lemma.
\endproof
\begin{theorem}\label{thm:big_w} Asymptotically almost all matroids on $n$ elements have $w(M)\geq \digamma(n)$.\end{theorem}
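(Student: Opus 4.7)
The plan is to invoke Lemma~\ref{lem:thin} (Corollary~\ref{cor:thin}) with $\mathcal{M} := \{M \in \MM : w(M) < \digamma(n)\}$. For each matroid $M \in \mathcal{M} \cap \MM_{n,r}$, the set $W(M)$ of circuit-hyperplanes is, by definition, a stable set of the Johnson graph $J(n,r)$, and by hypothesis $|W(M)| < \digamma(n)\binom{n}{r}$. Consequently
$$
|\{W(M) : M \in \mathcal{M} \cap \MM_{n,r}\}| \;\leq\; i\!\left(J(n,r),\, \digamma(n)\binom{n}{r}\right),
$$
and our task reduces to controlling this count.

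For $0 < r < n$, Lemma~\ref{lem:small_stable} yields a constant $\varepsilon > 0$ such that, for $n$ sufficiently large,
$$
\log i\!\left(J(n,r),\, \digamma(n)\binom{n}{r}\right) \;\leq\; (1-\varepsilon)\log s(n) \;\leq\; (1-\varepsilon)\log m(n),
$$
using $s(n) \leq m(n)$. For the boundary cases $r \in \{0,n\}$, there is only a single matroid, so $|\{W(M)\}| \le 1$ and the bound is trivial. Since $c\log(n)^3/n \to 0$, for $n$ large enough we have $\varepsilon \geq c\log(n)^3/n$, and therefore the hypothesis of Lemma~\ref{lem:thin} is met for all $0 \le r \le n$. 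We conclude that $\mathcal{M}$ is thin, which proves the theorem.

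The principal work has already been done in Lemma~\ref{lem:small_stable}, which provides a constant-factor improvement over $\log s(n)$; this is the single crucial input, and the remaining verification is a straightforward bookkeeping exercise comparing the two gap quantities $\varepsilon$ (constant) and $c\log(n)^3/n$ (vanishing). The only subtlety worth flagging is that we pass from a bound on stable sets in $J(n,r)$ to a bound on $W(M)$ that holds uniformly in $r$, which is what the statement of Lemma~\ref{lem:thin} requires.
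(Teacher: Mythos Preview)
Your proof is correct and follows essentially the same approach as the paper. The only difference is cosmetic: the paper unpacks the argument by working directly inside~$\NS$ and bounding $|\mathcal{M}_{n,r}| \le |\mathcal{U}_{n,r}|\cdot i(J(n,r),m)$, whereas you route the same estimate through the packaged Lemma~\ref{lem:thin}; in both cases the decisive input is Lemma~\ref{lem:small_stable}.
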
 
\proof Let $\mathcal{M}_{n,r}:=\{M\in \NS_{n,r}: w(M)< \digamma(n)\}$. It suffices to show that $\mathcal{M}:=\bigcup \mathcal{M}_{n,r}$ is thin. Each matroid $M\in \mathcal{M}_{n,r}$ is determined by the pair $U(M), W(M)$, where $U(M)\in \mathcal{U}_{n,r}$ and $W(M)$ is a stable set of the Johnson graph $J(n,r)$ with $|W(M)|\leq m:=\digamma(n)\binom{n}{r}$. Hence
$$|\mathcal{M}_{n,r}|\leq |\mathcal{U}_{n,r}| i\left(J(n,r),m\right). $$
Using Lemma \ref{lem:small_stable}, $\log i(J(n,r),m)\leq (1-\epsilon)\log s(n)$ for sufficiently large $n$ and by Lemma \ref{lem:ns_ubound} we have $\log |\mathcal{U}_{n,r}|\leq \Upsilon(n)\binom{n}{n/2}\leq o(\log s(n))$. Hence
$$\log\frac{ |\mathcal{M}_{n,r}|}{s(n)}\leq o(\log s(n))+(1-\epsilon)\log s(n)-\log s(n)=(-\epsilon+o(1))\log s(n)<-2\log(n+1)$$
for sufficiently large $n$, so that $$\frac{|\mathcal{M}\cap\MM_{n}|}{m(n)}\leq \frac{\sum_r|\mathcal{M}_{n,r}|}{s(n)}\leq (n+1)\frac{\max_r|\mathcal{M}_{n,r}|}{s(n)}\leq \frac{1}{n+1}\rightarrow 0$$
when $n\rightarrow \infty$, as required.\endproof

\subsection{Truncations of matroids} If  $M$ is a matroid of rank $r$ on ground set $E$, the {\em truncation} $T(M)$ is the matroid on the same ground set $E$ whose independent sets are 
$$\{X\subseteq E: |X|\leq r-1, X\text{ independent in }M\}.$$
\begin{theorem} Asymptotically almost no matroids arise as the truncation of another matroid.\end{theorem}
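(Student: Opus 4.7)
The plan is to apply Theorem~\ref{thm:big_w}, which asserts that asymptotically almost all matroids~$M$ satisfy $w(M) \ge \digamma(n) = 1/(5n)$. I will show that matroids of the form $M = T(M')$ have their circuit-hyperplane structure constrained enough to be placed in a thin class, together with a separate contribution from Theorem~\ref{thm:small_u}.

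The first step is to characterize $W(M)$ for truncations: a set $X \in \binom{E}{r}$ is a circuit-hyperplane of $M = T(M')$ if and only if $X$ is simultaneously a circuit of $M'$ of size $r$ and a flat of $M'$ of rank $r-1$. This follows from the truncation rank formula $r_{T(M')}(Y) = \min(r_{M'}(Y), r)$ together with the fact that flats of $T(M')$ of rank less than $r$ coincide with flats of $M'$ of the same rank: the circuit condition on $X$ transfers symbolically from $M$ to $M'$ on $r$-sets, and the hyperplane condition $r_M(X+e)>r-1$ for every $e\notin X$ translates into $X = \cl_{M'}(X)$.

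The second step is a uniform bound on $|W(M)|$. If $X_1 \ne X_2$ are two such ``closed $r$-circuits'' of $M'$, submodularity together with the fact that proper subsets of a circuit are independent forces $|X_1 \cap X_2| \le r-2$: otherwise, one would have $r_{M'}(X_1 \cup X_2) = r-1$, so the $r+1$ $r$-subsets of $X_1 \cup X_2$ would all be dependent in $M$ and pairwise Johnson-adjacent, contradicting the isolation of $X_1$ in the non-basis graph. Hence distinct closed $r$-circuits have disjoint sets of $r$ independent $(r-1)$-subsets, giving the uniform bound $|W(M)| \le \binom{n}{r-1}/r$.

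To close the argument, I would split the truncations according to the smallest circuit size $k$ of $M'$. For $k$ sufficiently small (any constant, or at least $o(\log n)$), the $r$-supersets of a size-$k$ circuit of $M'$ form a clique of non-bases in $M$, forcing $u(M)$ to exceed the $\upsilon(n) = O(\log^3 n/n)$ bound from Theorem~\ref{thm:small_u} and placing $M$ in the corresponding thin class. For the remaining truncations, the uniform bound on $|W(M)|$ combined with Corollary~\ref{cor:thin} yields thinness. The main obstacle is closing the quantitative gap in this second case: $\binom{n}{r-1}/r$ is only a constant factor smaller than $\binom{n}{r}\cdot\digamma(n)$, so a sharper argument---possibly exploiting the lattice structure of rank-$(r-1)$ flats of $M'$, or the fact that $W(M)$ is determined by the matroid $M'$ itself and hence by at most $m(n,r+1)$ choices---seems necessary to bring the count of distinct $W(M)$ arising from truncations below the $(1 - c\log^3 n/n)\log m(n)$ threshold required by Corollary~\ref{cor:thin}.
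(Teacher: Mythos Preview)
Your approach has a genuine gap, which you yourself identify: the uniform bound $|W(T(M'))| \le \binom{n}{r-1}/r$ is not strong enough to feed into Corollary~\ref{cor:thin}, and the case split on the girth of $M'$ does not close it. The paper avoids this difficulty entirely by a much simpler idea that you brush against in your final parenthetical remark but do not pursue.

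The paper's argument does not analyse $W(T(M'))$ at all. Instead it counts the \emph{sources} $M'$ needed to produce all truncations, after first observing that truncation is blind to circuit-hyperplanes of the source. Concretely: if $M''$ is obtained from $M'$ by relaxing a circuit-hyperplane $X$, then $T(M'') = T(M')$, because every proper subset of the circuit $X$ is already independent in $M'$, so the independent sets of size at most $r(M')-1$ are unchanged. Iterating, every truncation equals $T(M'')$ for some $M''$ with $W(M'') = \emptyset$. Hence
\[
|\{T(M') : M' \in \MM_n\}| \;\le\; |\{M'' \in \MM_n : W(M'') = \emptyset\}|,
\]
and the right-hand side is thin by Theorem~\ref{thm:big_w} (since $w(M'')=0 < \digamma(n)$). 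That is the whole proof.

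The moral is that you were trying to show that the \emph{output} $T(M')$ has few circuit-hyperplanes, whereas the paper exploits that the \emph{input} $M'$ can be taken to have none. Your side observation ``$W(M)$ is determined by $M'$ itself, hence by at most $m(n,r+1)$ choices'' is the right direction; what is missing is the relaxation trick that cuts $m(n,r+1)$ down to the thin class $\{M' : W(M') = \emptyset\}$.
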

\proof Let $\mathcal{M}_n:=\{T(M): M\in \MM_n\}$. If $M'$ arises from $M$ by relaxing a circuit-hyperplane $X\in W(M)$, then $T(M')=T(M)$ as $r_M(X)\geq r(M)-1$. Thus 
$$\mathcal{M}_n=\{T(M): M\in \MM_n, W(M)=\emptyset\}.$$
It follows that for each $n$,  $\mathcal{M}_n$ has at most as many members as  $\{M\in \MM_n: W(M)=\emptyset\}$. By Theorem \ref{thm:big_w}, $\{M\in \MM: W(M)=\emptyset\}$ is thin, and hence $\mathcal{M}=\bigcup_n \mathcal{M}_n$ is thin.
 \endproof




\section{\label{sec:discussion}Final remarks}
\subsection{A conjecture} We think that the following strengthening of Theorem \ref{thm:ns} is worth aiming for.
\begin{conjecture} \label{conj:ns}There is an $\epsilon>0$ and a class of matroids $\NS\subseteq \MM$ containing almost all matroids, such that 
$$\log \left|\{U(M): M\in  \MM_{n,r}\cap \NS\}\right|\leq 2^{-\epsilon n}\binom{n}{n/2}$$
for all $0\leq r\leq n$.
\end{conjecture}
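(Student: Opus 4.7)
The bound in Conjecture~\ref{conj:ns} is dramatically stronger than Theorem~\ref{thm:ns}: it demands that $U(M)$ be describable in at most $2^{-\epsilon n}\binom{n}{n/2}$ bits, while the existing encoding of Lemma~\ref{thm:encoding} already consumes $\Theta(\log(n)/n^2)\binom{n}{n/2}$ bits on the stable set $S$ alone. Any proof must therefore replace the ``stable set plus local correction'' template with a description that exploits directly the flat structure responsible for $U(M)$.

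The plan is to describe $U(M)$ by a short list of dependent flats of $M$ that collectively cover it. For each $X\in U(M)$ one has a dependent flat $F$ with $|X\cap F|>r_M(F)$: take $F=\cl_M(X)$ when $r_M(X)<r-1$, and $F=\cl_M(C)$ for the unique circuit $C\subseteq X$ when $r_M(X)=r-1$. Letting $\phi(M)$ denote the minimum number of dependent flats needed to cover $U(M)$, the list of covering flats (at most $n+\log n$ bits each), together with a bounded amount of auxiliary information separating the circuit-hyperplane coverings from genuine $U$-coverings, gives a description of $U(M)$ of length $O(n\cdot\phi(M))$. Observe that $\phi(M)=0$ if and only if $M$ is sparse paving.

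I would take $\NS$ to be the class of loopless, coloopless matroids satisfying $\phi(M)\leq 2^{\delta n}$ for a constant $\delta<1$ to be chosen. Under this cap, $\log|\{U(M):M\in\NS\cap\MM_{n,r}\}|\leq O(n\cdot 2^{\delta n})\leq 2^{-\epsilon n}\binom{n}{n/2}$ with $\epsilon=1-\delta-o(1)$, which is exactly the target bound. A cruder route to the same conclusion would be to strengthen Theorem~\ref{thm:small_u} to $|U(M)|\leq 2^{(1-\epsilon')n}$ for almost all matroids, since $\phi(M)\leq|U(M)|$ always; but such a bound is itself much stronger than the existing polylogarithmic one.

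The hard part is proving that this $\NS$ is asymptotically conull. The compression of Lemma~\ref{thm:encoding} already records a collection $\ZZ$ of flats, but only bounds $|\ZZ|\leq 2|S|=O\!\left(\log(n)/n^2\cdot\binom{n}{n/2}\right)$, which is far too weak here. I would attempt to extend the compression so that each extra dependent flat needed to cover $U(M)$ contributes an independent binary degree of freedom in the encoding of $M$, in analogy with the $2^t$ factor of Lemma~\ref{thm:encoding} but indexed by flats rather than by components of $G[K\cap A']$. The central obstacle is that distinct dependent flats may cover overlapping $r$-sets which merge into a single big component of $G[K\cap A']$, so that the binary choices one wishes to extract are not cleanly separable by the current argument. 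Circumventing this appears to require a genuinely new flat-local compression, together with structural input that the covering dependent flats of a typical matroid are essentially independent; such input seems comparable in strength to Conjecture~\ref{conj:sp} itself, which suggests that Conjecture~\ref{conj:ns} is substantially harder than Theorem~\ref{thm:ns}.
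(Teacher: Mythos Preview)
This statement is Conjecture~\ref{conj:ns}, presented in the paper's final section as an open problem; the paper does \emph{not} prove it and offers no proof to compare against. The authors introduce it explicitly as ``a conjecture\ldots worth aiming for,'' note only that it would yield $u(M)\sim 2^{-\epsilon n}$ and hence linear girth for almost all matroids, and leave it at that.

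Your write-up is not a proof either, and to your credit you say so: you propose taking $\NS=\{M:\phi(M)\le 2^{\delta n}\}$, observe that this choice makes the displayed inequality trivial, and then identify the real problem as showing that $\NS$ contains almost all matroids. That last step is the entire content of the conjecture, and you correctly locate the obstruction: the existing compression bounds $|\ZZ|$ only by $2|S|=\Theta(\log(n)/n^2)\binom{n}{n/2}$, which is exponentially too large, and the $2^t$ multiplicity in Lemma~\ref{thm:encoding} is indexed by components rather than by flats, so overlapping flats do not yield independent degrees of freedom. Your closing remark---that resolving this looks comparable in difficulty to Conjecture~\ref{conj:sp}---is consistent with the paper's own stance in leaving it open.

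So there is no discrepancy to report: neither you nor the paper proves the statement, and your diagnosis of why the paper's methods fall short is accurate. What you have written is a reasonable research outline, not a proof, and should be labeled as such.
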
 
This would translate to an exponential upper bound of $\upsilon'(n)\sim 2^{-\epsilon n}$ on the density $u(M)$ of almost all matroids, which would in turn give a linear lower bound on the girth of $\sim \log (1/\upsilon'(n))=\Omega(n)$,  for almost all matroids. 

\ignore{In the present paper, the bound on $\mathcal{U}_{n,r}=\left|\{U(M): M\in  \MM_{n,r}\cap \NS\}\right|$ is derived from a compressed description of $U(M)$, the vertices in `big components' of $J(n,r)\setminus\BB(M)$.
Although our encoding scheme yields a compressed description of $U(M)$ as a whole, there seems to be no a priori reason why the bounds on the length of a description of $U(M)$ could not exist on the level of individual components of $J(n,r)\setminus \BB$. We find it likely that the amount of information needed to describe a component $C$ is $o(|C|/(\log s(n,r)))$, as $|C|\rightarrow \infty$. In that case, matroids such that $U(M)$ contains `very big' components must be rare by an application of Lemma \ref{lem:big_u}.

}

\subsection{Bounding other classes of matroids}
We applied Theorem \ref{thm:ns} essentially through its Corollary~\ref{cor:thin} to show that various classes $\mathcal{M}$ of matroids are thin. Our main tools for bounding the number of distinct sets of circuit-hyperplanes $W(M)$ which arise from matroids $M\in \mathcal{M}$ were combinatorial, based on a lower bound on the cardinality of  $U(M)$ or a uniform upper bound on the cardinality of $W(M)$ itself. 

We doubt that this exhausts the possibilities. To illustrate, we cite two conjectures from \cite{MayhewNewmanWelshWhittle2011}.
\begin{conjecture}Asymptotically almost every matroid is not representable over any field.
\end{conjecture}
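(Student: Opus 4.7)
The plan is to apply Corollary~\ref{cor:thin} to the class $\mathcal{R}$ of matroids representable over some field. Because each matroid has a unique set of circuit-hyperplanes, the trivial estimate
\[
  |\{W(M) : M \in \mathcal{R} \cap \MM_{n,r}\}| \le |\mathcal{R} \cap \MM_n|
\]
reduces the task to establishing $\log |\mathcal{R} \cap \MM_n| \le (1 - c\log(n)^3/n)\log m(n)$. Since $\log m(n) \ge \binom{n}{n/2}/n = \Theta(2^n/n^{3/2})$ by Theorem~\ref{thm:knuth}, any polynomial-in-$n$ bound $\log |\mathcal{R} \cap \MM_n| \le n^{O(1)}$ would beat the hypothesis of the corollary by an exponential margin, and indeed one does not need to exploit the refined bound of Theorem~\ref{thm:ns} at all.

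First I would establish such a polynomial bound by parameter counting. Fix a rank $r$ and a characteristic $p$. Every $M \in \mathcal{R} \cap \MM_{n,r}$ representable over a field of characteristic $p$ is representable over the algebraic closure of the prime field, and after row reduction admits a matrix representation $[I_r \mid A]$ with $A$ an $r \times (n-r)$ matrix. The matroid $M$ is then determined by the zero/nonzero pattern of the $\binom{n}{r}$ maximal subdeterminants of $[I_r \mid A]$, each a polynomial of degree at most $r$ in the $k = r(n-r) \le n^2/4$ entries of $A$. Over an algebraically closed field, the number of achievable zero patterns of $m$ polynomials of degree at most $d$ in $k$ variables is at most $(md)^{O(k)}$ by a Milnor--Thom / Bezout-style stratification argument, yielding $2^{O(n^3)}$ matroids per characteristic.

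The main obstacle is uniform control over characteristics, since a priori infinitely many primes $p$ can occur. The standard resolution is to show that representability of a fixed $n$-element matroid in characteristic $p$ for arbitrarily large $p$ implies representability in characteristic $0$, via a model-theoretic compactness or Ax--Grothendieck-style transfer, or alternatively via explicit bounds on the coefficients of a defining system of polynomial identities. Either route reduces the problem to finitely many characteristics (bounded by some function of $n$), after which summing the per-characteristic estimate gives the required polynomial bound on $\log |\mathcal{R} \cap \MM_n|$, and Corollary~\ref{cor:thin} then concludes thinness with exponential slack. No information about $U(M)$ enters the argument beyond what is recovered from the trivial inequality above.
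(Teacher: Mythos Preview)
The paper does not prove this statement. It is presented as a \emph{conjecture} in Section~\ref{sec:discussion}, immediately followed by the remark ``This conjecture was recently proved by Nelson~\cite{Nelson2016}.'' The paper then only speculates that the result ``seems amenable to an argument based on Theorem~\ref{thm:ns} as well,'' pointing to the poor behaviour of circuit-hyperplane relaxation with respect to representability. There is therefore no proof in the paper to compare your proposal against.

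On the substance of your proposal: the outline is essentially Nelson's argument, and you are right that neither Theorem~\ref{thm:ns} nor Corollary~\ref{cor:thin} is needed --- a polynomial bound $\log|\mathcal{R}\cap\MM_n|\le n^{O(1)}$ already kills the ratio against $\log m(n)\ge\binom{n}{n/2}/n$ by an exponential margin. The zero-pattern counting step (R\'onyai--Babai--Ganapathy type bound) is sound and gives the per-characteristic $2^{O(n^3)}$ bound.

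The one place to be careful is your handling of the characteristics. Compactness/Lefschetz tells you that a fixed matroid not representable in characteristic~$0$ is representable in only finitely many characteristics, but ``finitely many'' a priori depends on the matroid, so the phrase ``reduces the problem to finitely many characteristics (bounded by some function of~$n$)'' is not justified by compactness alone. You need an effective statement --- e.g.\ an effective Nullstellensatz bound, or the explicit bound Nelson extracts --- to get a uniform cut-off in terms of~$n$. Once that is in hand, your argument goes through.
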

This conjecture was recently proved by Nelson~\cite{Nelson2016}. It seems amenable to an argument based on Theorem \ref{thm:ns} as well. Relaxing  a circuit-hyperplane in a linear matroid often results in a non-linear matroid, a phenomenon which has been exploited to prove that any real-representable matroid is the minor of an excluded minor for real-representability \cite{MayhewNewmanWhittle2009}. Geelen, Gerards and Whittle note in \cite{GGW2014} that ``while the operation of relaxing a circuit-hyperplane does not behave well with respect to representation in general, it behaves particularly poorly with respect to representation over finite fields.'' Such limited flexibility may very well translate to bounds 
which will complement Corollary \ref{cor:thin}.
\begin{conjecture}\label{conj:sparseminor}Let $N$ be a fixed sparse paving matroid. Asymptotically almost every matroid has an $N$-minor.
\end{conjecture}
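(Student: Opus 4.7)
The natural framework is Corollary~\ref{cor:thin}: writing $\mathcal{M}^N$ for the class of matroids without an $N$-minor, it suffices to exhibit a constant $c > 0$ such that for all large $n$ and all $0 \le r \le n$,
$$
\log |\{W(M) : M \in \mathcal{M}^N \cap \MM_{n,r}\}| \le \left(1 - c \frac{\log(n)^3}{n}\right) \log m(n).
$$
The plan is to exploit the sparse paving structure of $N$ to recast this as a pattern-matching problem about stable sets in the Johnson graph, and then attack that problem by a refinement of the Kleitman--Winston compression developed in Lemma~\ref{thm:encoding}.

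Suppose $N$ has rank $a$ on $b$ elements and circuit-hyperplane set $W_N \subseteq \binom{[b]}{a}$. For disjoint $C, D \subseteq [n]$ with $|C| = r - a$ and $|D| = n - r - b + a$, set $[C; D] := \{X \in \binom{[n]}{r} : C \subseteq X, X \cap D = \emptyset\}$; this is a sub-Johnson graph of $J(n, r)$ isomorphic to $J(b, a)$ via $X \mapsto X \setminus C$. If $M$ is sparse paving and $D$ is coindependent in $M$, then $M/C \backslash D$ is again sparse paving, with non-bases corresponding exactly to $W(M) \cap [C; D]$; hence $M$ has an $N$-minor iff for some such $(C, D)$ the set $W(M) \cap [C; D]$ matches $W_N$ after some relabeling $E \setminus (C \cup D) \to [b]$. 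By Theorem~\ref{thm:small_u}, $u(M) \le O(\log(n)^3/n)$ for almost all $M$, so $U(M)$ intersects only a negligible fraction of the sub-Johnson graphs $[C; D]$; up to this negligible error the $N$-minor question for typical $M$ reduces to a pattern-matching question purely about the stable set $W(M) \subseteq V(J(n,r))$.

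It thus suffices to show that among the stable sets $W$ of $J(n, r)$ of typical size $|W| \in [\digamma(n) \binom{n}{r}, \upsilon(n) \binom{n}{r}]$ (the range occupied by $W(M)$ for almost all $M$, by Theorems~\ref{thm:small_u} and~\ref{thm:big_w}), the fraction of $W$ that avoid the pattern $W_N$ (under every relabeling) in every sub-Johnson graph $[C; D]$ is $2^{-\Omega(\log(n)^3 \log s(n)/n)}$. I would attempt this via a refinement of Lemma~\ref{thm:encoding}: whenever $W$ is pattern-free, each vertex added to the partial selection $S$ should, through the forbidden pattern, force many additional vertices of the ambient $[C; D]$ to lie outside $W$, tightening the propagation step of the compression algorithm by a factor depending on $|W_N|$ and $b$. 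As an alternative, a switching argument --- pair each pattern-avoiding $W$ with many pattern-containing $W'$ via local swaps inside a single $[C; D]$, and bound the multiplicity of each $W'$ --- may be more tractable.

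The main obstacle is precisely this final step. Unlike Theorem~\ref{thm:uniform}, where ``no $U_{a, b}$-minor'' forces the global density bound $d(M) \ge 1/\binom{b}{a}$ which directly contradicts Theorem~\ref{thm:small_d}, a sparse paving $N$ with $W_N \neq \emptyset$ cannot be detected from $w(M)$, $u(M)$, or $d(M)$ alone: one must locate a specific local pattern of circuit-hyperplanes, and the present machinery controls the cardinality of $W(M)$ but not its geometric distribution within $J(n, r)$. I suspect that a clean proof of Conjecture~\ref{conj:sparseminor} must either introduce a genuinely new pseudorandomness theorem for large stable sets of the Johnson graph, or else proceed via the stronger Conjecture~\ref{conj:sp}, after which the Crapo--Rota picture of $W(M)$ as an essentially uniform random stable set would ensure that the pattern $W_N$ appears in very many sub-Johnson graphs $[C; D]$.
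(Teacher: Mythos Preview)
The statement you are attempting is Conjecture~\ref{conj:sparseminor}, and the paper gives \emph{no proof} of it: it is listed in Section~\ref{sec:discussion} as an open problem, with the explicit remark that ``the conjecture is open even for tiny matroids such as $N=W^3$ or $N=M(K_4)$.'' There is therefore nothing to compare your argument against.

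Your write-up is not a proof either, and to your credit you say so. The reduction you sketch --- via Corollary~\ref{cor:thin}, Theorems~\ref{thm:small_u} and~\ref{thm:big_w}, and the sub-Johnson graphs $[C;D]$ --- is sound as far as it goes (you should also require $C$ independent, not just $D$ coindependent, for the minor computation to work cleanly), and it correctly isolates the missing ingredient: one needs control over the \emph{local} distribution of $W(M)$ inside copies of $J(b,a)$, whereas the machinery of this paper only controls global densities $u(M)$, $w(M)$, $d(M)$. Your diagnosis that a ``pseudorandomness theorem for large stable sets of the Johnson graph'' is needed matches the paper's own assessment that ``the underlying question seems to be whether the large stable sets of the Johnson graphs collectively have `structure'\,''; and your fallback route through Conjecture~\ref{conj:sp} is exactly the reduction the paper records as Conjecture~5.3 (the sparse-paving variant), which the authors likewise regard as hard and which was only partially addressed by Critchlow.

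In short: there is no gap to name beyond the one you already name, because the statement is genuinely open and the paper does not claim otherwise.
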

The case of this conjecture where $N$ is any uniform matroid is settled by Theorem \ref{thm:uniform}.  As far as we know, the conjecture is open even for tiny matroids such as $N=W^3$ or $N=M(K_4)$. This does not bode well for the difficulty of the general problem. Even if almost all matroids are sparse paving, we are left with the following  hard variant of Conjecture \ref{conj:sparseminor}.
\begin{conjecture}Let $N$ be a fixed sparse paving matroid. Asymptotically almost every sparse paving matroid has an $N$-minor.\end{conjecture}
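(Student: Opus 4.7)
The plan is to count sparse paving matroids of rank $r$ on $[n]$ that do not contain $N$ as a minor, and show that this count is $o(s(n,r))$ uniformly in $r$. Fix $N$ as a sparse paving matroid of rank $k$ on $\ell$ elements. By (an analogue of) Theorem~\ref{thm:rank_bound} within the sparse paving class, and by Theorem~\ref{thm:simple}, one may restrict to $|r-n/2|\le \beta\sqrt{n}$ and $M$ without loops or coloops. For each disjoint pair $(C,D)\subseteq [n]$ with $|C|=r-k$ and $|D|=n-r+k-\ell$, the map $Y\mapsto C\cup Y$ embeds $J([n]\setminus(C\cup D),k)\cong J(\ell,k)$ as an induced subgraph of $J(n,r)$ on the \emph{window} $[C;D]$. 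A sparse paving matroid $M$ with circuit-hyperplane set $W(M)$ has $M/C\backslash D = N$ (under a fixed identification of ground sets) exactly when $C$ is independent, $D$ is coindependent, and $W(M)\cap [C;D]$ equals the image of $W(N)$ under this embedding.

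The idea is to set up a switching operation: given a sparse paving $M$ without $N$-minor and a window $(C,D)$, replace $W(M)\cap [C;D]$ by the image of $W(N)$. This produces a sparse paving matroid with $N$-minor provided (i) $C$ is independent and $D$ is coindependent in $M$, which for $|C|\ll r$ and $|D|\ll n-r$ holds for typical $(C,D)$ with high probability, and (ii) no $r$-set in the image of $W(N)$ is adjacent in $J(n,r)$ to an element of $W(M)\setminus [C;D]$. Letting $t(M)$ be the number of windows where the switch is valid and $t'(M')$ the number of windows where the reverse switch (from a matroid with $N$-minor to one without) succeeds, a double-counting inequality yields
\[
    |\{M \text{ sparse paving, no $N$-minor}\}|\cdot \overline{t} \le |\{M' \text{ sparse paving, with $N$-minor}\}|\cdot \overline{t'},
\]
where averages are over the respective classes. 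Since there are roughly $2^{n H(r/n)}\cdot 2^{n H((n-r+k-\ell)/n)}$ windows, if one can show $\overline{t}\ge \omega(1)\cdot \overline{t'}$, the conjecture follows.

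The main obstacle is controlling condition (ii). Each $r$-set in the image of $W(N)$ has $\Theta(n)$ neighbors in $J(n,r)$ outside the window, and, assuming the analogue of Theorem~\ref{thm:big_w} holds in $\NS$, the density of circuit-hyperplanes in a typical sparse paving matroid is $\Omega(1/n)$, so each candidate window fails (ii) with nonvanishing probability. Establishing (ii) simultaneously for a positive fraction of windows requires genuine information about the joint distribution of $W(M)$ across distant parts of $J(n,r)$, i.e.\ a Gibbs sample from the hard-core model on $J(n,r)$ at infinite fugacity, about which little structural information is currently known. A promising route is an entropy argument in the spirit of Shearer's Lemma from Section~\ref{sec:few}: one would try to show that conditional on $W(M)$ outside a window, the pattern inside is sufficiently close to a uniform stable set of $J(\ell,k)$ that the configuration matching $W(N)$ appears with probability $\Omega(1/s(\ell,k))$. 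Combined with exponentially many essentially disjoint windows, this would drive the probability of having no $N$-minor to zero. Executing this requires a local decorrelation estimate for stable sets in the Johnson graph that goes beyond what is established in this paper, and this is where the bulk of the work would lie.
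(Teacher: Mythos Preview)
This statement is presented in the paper as an open \emph{conjecture}, not a theorem; the paper offers no proof and instead remarks that Critchlow~\cite{Critchlow2016} has settled it for certain specific sparse paving matroids~$N$. There is therefore no proof in the paper to compare your attempt against.

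Your proposal is not a proof either, and to your credit you say so explicitly. The switching/double-counting framework you outline is a natural first idea, but the argument collapses at exactly the point you identify: you need that for a typical sparse paving~$M$, a positive (or at least not-too-small) fraction of windows~$(C,D)$ satisfy condition~(ii), i.e.\ the boundary of the window is clear of circuit-hyperplanes. Since each inserted circuit-hyperplane has $\Theta(n)$ external neighbours and the density of~$W(M)$ is $\Omega(1/n)$, a naive union bound gives nothing, and you would indeed need a decorrelation or anti-concentration statement for stable sets in~$J(n,r)$ that is not available. Your entropy/Shearer heuristic for the conditional distribution inside a window is plausible-sounding but is precisely the kind of ``structure of large stable sets in the Johnson graph'' question the paper flags as the underlying difficulty in the paragraph following the conjecture. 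So what you have written is a reasonable research plan, not a proof, and the gap you name is genuine and is the whole problem.
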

Recently, Critchlow~\cite{Critchlow2016} proved this conjecture for several sparse paving matroids.
The underlying question seems to be whether the large stable sets of the Johnson graphs collectively have `structure'. If so, most stable sets may avoid certain sub-configurations and most sparse paving matroids will avoid having certain sparse paving minors. If not, the incidence of a specific minor should be according to largely independent probabilities of seeing circuit-hyperplanes in a minor, upon contracting and deleting random sets of an appropriate size. In that case, the conjecture should hold.

\ignore{
\subsection{Excluding a uniform minor}
In \cite{Geelen2008}, Geelen propose that matroids without a fixed uniform minor have a particular structure which would necessarily put tight limits on the number of such matroids. Theorem \ref{thm:uniform} implies that any minor-closed class of matroids which does not contain all uniform matroids is necessarily thin, and in this sense lends support to these conjectures. 

We have shown that almost all matroids are $k$-connected, but in the present context the following would be more useful.
\begin{conjecture}\label{conj:uniform}Let $k,l\in \N$. Asymptotically almost all matroids without a $U_{k,2k}$-minor are either vertically $l$-connected or cyclically $l$-connected.\end{conjecture}
}

\subsection{Entropy counting}
We counted matroids by conditioning on the value of $U(M)$. The statement of the main result in this paper can be cast in the language of entropy. For let $M$ be a random matroid that is drawn uniformly from a class of matroids $\mathcal{M}\subseteq \MM_{n}$. Then Theorem \ref{thm:ns} implies that the entropy of the derived random variable $U(M)$ is bounded by $$H(U(M))\leq c\log(n)^3/n^2\binom{n}{n/2}$$
so that bounding $\log |\mathcal{M}|=H(M)=  H(U(M))+ H(M|U(M))$ strictly below $\log m(n)\geq \binom{n}{n/2}/n$ amounts to sufficiently bounding the conditional entropy $H(M|U(M))$. In at least one setting, the case where $U(M)$ is assumed to be large for all $M\in \mathcal{M}$, this scheme works through an argument which itself relies on entropy reasoning, namely Shearer's Lemma. Theorem \ref{thm:ns} itself however follows from an encoding argument of which we could not see any `entropic' version. Since we hope that pulling the entire argument within the realm of entropy counting will yield further insights and shortcuts, we ask if the Kleitman-Winston encoding procedure can somehow be matched by an entropy counting argument.

\section{Acknowledgement}
The first author thanks Nathan Bowler, Jim Geelen, Peter Nelson, and Pascal Schweitzer for a lively discussion about an early version of this material, during the Graph Theory meeting in Oberwolfach in January 2016. Peter Nelson pointed out on this occasion that the $k$-connectivity of almost all matroids should also follow from the asymptotic sparsity of non-bases.

We would like to thank the referees, whose detailed comments allowed us to improve the exposition in several places.

\bibliographystyle{alpha}
\bibliography{bib}

\end{document}